\documentclass[12pt]{amsart}

\usepackage[margin=3cm]{geometry}
\usepackage[english]{babel}
\usepackage{hyperref}
\hypersetup{hidelinks}
\usepackage{amsmath}
\usepackage{amsthm}
\usepackage{amssymb}
\usepackage{bbm}
\usepackage{ucs}
\usepackage{enumerate}
\usepackage{tikz}
\usepackage{tikz-cd}
\usepackage[utf8]{inputenc}
\usepackage[T1]{fontenc}
\usepackage{mathtools}
\usepackage{dsfont}
\usepackage{faktor}
\usepackage{float}
\usepackage{needspace}
\AtBeginDocument{\def\MR#1{}}

\theoremstyle{plain}
\newtheorem{theorem}{Theorem}[section]

\newtheorem{thmx}{Theorem}

\newtheorem{lemma}[theorem]{Lemma}
\newtheorem{proposition}[theorem]{Proposition}
\newtheorem{corollary}[theorem]{Corollary}

\theoremstyle{definition}
\newtheorem{definition}[theorem]{Definition}
\newtheorem{example}[theorem]{Example}

\newtheorem{remark}[theorem]{Remark}

\newcommand{\C}{\mathbb{C}}
\newcommand{\N}{\mathbb{N}}

\newcommand{\Z}{\mathbb{Z}}

\newcommand{\ot}{\otimes}
\newcommand{\id}{{\rm id}}

\newcommand{\F}{\mathbb{F}}
\newcommand{\Aut}{{\rm Aut}}
\newcommand{\Ad}{{\rm Ad}}

\DeclareMathOperator{\Rad}{Rad}
\DeclareMathOperator{\Sub}{Sub}

\DeclareMathOperator{\Prob}{Prob}

\DeclareMathOperator{\IRA}{IRA}
\DeclareMathOperator{\qIRA}{qIRA}
\DeclareMathOperator{\IRS}{IRS}

\DeclareMathOperator{\SRA}{\text{-}SRA}

\newcommand{\supp}{\mathrm{supp}}

\title{Random $C^*$-subalgebras}
\date{}
\author{Pierre Fima}
\address{Pierre Fima
\newline
Universit\'e Paris Cit\'e and Sorbonne Universit\'e, CNRS, IMJ-PRG, F-75013 Paris, France.}
\email{pierre.fima@imj-prg.fr}
\thanks{P.F. is partially supported by the CNRS IEA GAOA and ANR-25-CE40-5010 (CroCQG).
L.T. is supported by China Scholarship Council.
This research was
funded in whole or in part by the Austrian Science Fund (FWF)
10.55776/ESP4189024.}

  \author{Hanna Oppelmayer}
\address{Hanna Oppelmayer
\newline
University of Innsbruck}
\email{hanna.oppelmayer@uibk.ac.at} 
  
 \author{Leyan Tao}
\address{Leyan Tao
\newline
Universit\'e Paris Cit\'e and Sorbonne Universit\'e, CNRS, IMJ-PRG, F-75013 Paris, France.}
\email{tao@imj-prg.fr}

\begin{document}

\begin{abstract}
Given a discrete countable group $\Gamma$, we introduce the concept of random subalgebras of the reduced group $C^*$-algebra $C_r^*(\Gamma)$. This extends the existing theory of invariant random subgroups and the one of invariant random subalgebras of the group von Neumann algebra ${\rm L}(\Gamma)$.  As applications, we provide a characterisation of $C^*$-simplicity, a condition for amenability of the group and show the existence and uniqueness of a maximal nuclear $\Gamma$-invariant unital $C^*$-subalgebra of $C^*_r(\Gamma)$. Moreover, we give a topological characterisation of exactness of any separable unital $C^*$-algebra. Finally, we study exotic random subalgebras of $C^*_r(\Gamma)$ and super-exotic random subalgebras of ${\rm L}(\Gamma)$, that is, random subalgebras which do not arise purely from random subgroups of $\Gamma$ and not from random subalgebras of $C^*_r(\Gamma)$, respectively.
\end{abstract}

\maketitle

\section{Introduction}
\noindent The aim of this work is to develop a theory of \textit{random $C^*$-subalgebras}, which extends the existing notions for groups and for von Neumann algebras. Let us first give a brief introduction to the concept for groups.

\vspace{0.2cm}

\noindent Let $\Gamma$ be a countable group. Let $\Sub(\Gamma)$ denote the space of subgroups of $\Gamma$, endowed with the Chabauty topology. An \textit{invariant random subgroup} (IRS) is a Borel probability measure on $\Sub(\Gamma)$ which is invariant under the conjugation action $\Gamma \curvearrowright \Sub(\Gamma)$.

\vspace{0.2cm}

\noindent This concept of invariant random subgroups was introduced by Vershik \cite{V1, V2}. 
In 2014, Abért–Glasner–Virág \cite{AGV14} developed the theory further and coined the modern name. In particular, they showed that every IRS arises as the push-forward measure of some pmp action on a standard probability space under the stabiliser map. Before their work, stabilisers of random points appeared in Bergeron--Gaboriau \cite{BG04}, while  Aldous--Lyons \cite{AL07} considered random networks, which in the case of Schreier graphs naturally relate to random subgroups.
\vspace{0.2cm}

\noindent IRSs have turned out to be very useful and have been studied in plenty of works. Let us provide here only a short excerpt, not aiming for completeness. For instance, Bowen \cite{Bowen14} used IRS to prove full entropy spectra of stationary actions of the free group. Similar for lamplighter groups by Hartman--Yadin \cite{HartmanYadin}. 
Also, the famous  Stuck--Zimmer Theorem  \cite{StuckZimmer1994} can be reformulated in terms of IRSs. Further rigidity results arising from IRSs can be found in  \cite{HartmanTamuz16},    \cite{Gel}, \cite{s7}, \cite{Glasner17}.
In contrast to Stuck--Zimmer rigidity,  Bowen \cite{BFree} showed that every nonabelian free group bears plenty of different IRSs.  

\vspace{0.2cm}

\noindent Instead of considering the space of subgroups, one may consider the space of von Neumann subalgebras of the group von Neumann algebra ${\rm L}(\Gamma)$, denoted here as well by $\Sub({\rm L}(\Gamma))$, viewed with the action $\Gamma \curvearrowright \Sub({\rm L}(\Gamma))$ by conjugation via the left-regular representation.
The topology on $\Sub({\rm L}(\Gamma))$ is the so-called Effros--Maréchal topology, which was introduced by Effros \cite{Effros} in 1965 as a Borel structure, until  Maréchal \cite{Marechal} proved in 1973 that it actually gives rise to a Polish topology. Further characterisations have been provided by Haagerup--Winsl{\o}w \cite{haagerup1}. The original Maréchal's proof of Polishness has been corrected in \cite{fima2024michael} and a new point of view on this topology, via UCP maps and conditional expectations, has also been developed in \cite{FimaLeMaitreMukherjeePatri2026}. Amrutam, Hartman and the second-named author \cite{AHO25} introduced the concept of \textit{invariant random von Neumann subalgebras} (short: IRA or von Neumann IRA) as $\Gamma$-invariant Borel probability measures on $\Sub({\rm L}(\Gamma))$. 

\vspace{0.2cm}

\noindent In the present work, we consider the Wijsman topology, which appeared already in Hausdorff \cite{Hausdorff} in 1914, and later in Frolík \cite{Frolik} and Wijsman \cite{Wijsman}. On the space $\Sub(A)$ of unital $C^*$-subalgebras of a given unital $C^*$-algebra $A$, the Wijsman topology is Polish whenever $A$ is separable (Theorem \ref{thm:polish}). Our first main result shows that, in contrast with the space of subgroups, the space of $C^*$-subalgebras is rarely compact.

\begin{thmx}[Theorem~\ref{thm:local-compactness}]\label{introthm: compact}
Let $A$ be a unital $C^*$-algebra. The following are equivalent:
\begin{enumerate}
\item $A$ is finite-dimensional;
\item $\Sub(A)$ is compact;
\item $\Sub(A)$ is locally compact.
\end{enumerate}
\end{thmx}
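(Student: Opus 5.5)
The plan is to prove $(1)\Rightarrow(2)\Rightarrow(3)\Rightarrow(1)$. The middle implication is trivial, so the work lies in the outer two. Both rest on one fact about the Wijsman topology on closed subsets of a metric space $X$: $F_n\to F$ iff $d(x,F_n)\to d(x,F)$ for every $x\in X$, and when $X$ is proper (closed balls compact) this coincides with Kuratowski--Painlev\'e convergence. In that case it is the Fell topology, which is compact Hausdorff on the hyperspace of all closed subsets.

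\emph{$(1)\Rightarrow(2)$.} If $A$ is finite-dimensional, it is a proper metric space for the norm. So the Wijsman topology on closed subsets of $A$ is the Fell topology, which is compact. It therefore suffices to show that $\Sub(A)$ is closed in it, i.e.\ closed under Kuratowski limits. Suppose $B_n\to F$ with $F=\liminf B_n=\limsup B_n$. Then $1\in F$. Moreover, if $x,y\in F$, choose $x_n,y_n\in B_n$ with $x_n\to x$ and $y_n\to y$. Then $\lambda x_n+y_n$, $x_ny_n$ and $x_n^*$ lie in $B_n$ and converge to $\lambda x+y$, $xy$ and $x^*$. Since $F$ is closed, $F$ is a unital $C^*$-subalgebra. (Nonemptiness is automatic, since $1\in F$.)

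\emph{$(3)\Rightarrow(1)$.} I argue by contraposition: assume $A$ is infinite-dimensional, and show that the point $\C1\in\Sub(A)$ has no compact neighbourhood. Since $\Sub(A)$ is metrizable (Theorem~\ref{thm:polish} in the separable case; in general, first countability at this point suffices), it is enough to find, in every basic neighbourhood
\[
U=\{B:\ |d(x_i,B)-d(x_i,\C1)|<\varepsilon,\ i=1,\dots,k\},
\]
a sequence $(B_n)$ with no Wijsman-convergent subsequence.

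I would build this sequence as follows.
\begin{itemize}
\item[(a)] An infinite-dimensional $C^*$-algebra contains an infinite-dimensional abelian $C^*$-subalgebra. This gives a sequence of pairwise orthogonal positive norm-one elements $e_n$; they form a $1$-separated set, since $\|e_n-e_m\|\ge 1$.
\item[(b)] Put $B_n=C^*(1,e_n)$ (or $\C1+\C e_n$ when $e_n$ is a projection). Each $B_n$ contains $\C1$, so $d(x_i,B_n)\le d(x_i,\C1)$ automatically.
\item[(c)] For the reverse inequality, use that the $e_n$ tend to $0$ weakly: for a state $\varphi$ nearly norming the coset $x_i+\C1$, orthogonality forces $\varphi(f(e_n))\to f(0)$. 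After passing to a subsequence (and possibly compressing each $e_n$ further, to a spectral piece lying under projections that are asymptotically ``invisible'' to the finitely many $x_i$), this should give $d(x_i,B_n)\to d(x_i,\C1)$. Hence $B_n\in U$ eventually.
\item[(d)] No subsequence converges. If $B_{n_j}\to B$, I would test distances at points such as $e_{n_l}$ and at suitable functional-calculus elements $g(e_{n_l})$. We have $d(e_{n_l},B_{n_l})=0$, while $d(e_{n_l},B_{n_j})$ stays bounded below for $j\neq l$ (orthogonality gives roughly $1/2$). So the sequence is not Cauchy in the Wijsman sense.
\end{itemize}
If needed, I would rescale so that the test points stay in a fixed bounded region.

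The main obstacle is step (c) together with its compatibility with (d). One must make $d(x_i,B_n)$ close to $d(x_i,\C1)$ simultaneously for the given finitely many $x_i$, which are arbitrary, while keeping the $B_n$ mutually far apart. My first attempt would choose the $e_n$ inside a hereditary subalgebra where the $x_i$ act almost as scalars, or otherwise just make $\|x_ie_n-e_nx_i\|$ small. Failing that, the fallback is to argue directly with the Fell-type description: local compactness of the Wijsman hyperspace of a metric space forces its closed balls to be compact, i.e.\ forces properness, hence finite-dimensionality of $A$.
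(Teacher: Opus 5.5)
Your $(1)\Rightarrow(2)$ is correct, by a different route from the paper. You import the hyperspace fact that for a proper metric space the Wijsman topology is the trace of the compact Fell topology, so you only need closedness of $\Sub(A)$ under Kuratowski--Painlev\'e limits, with $1\in F$ excluding $\emptyset$. The paper instead proves sequential compactness directly: it extracts a pointwise limit $f$ of the $1$-Lipschitz functions $d(\cdot,B_n)$ and uses finite-dimensionality only to show $f=d(\cdot,f^{-1}(0))$. The problem is $(3)\Rightarrow(1)$.

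There the construction fails outright, not just an estimate. For \emph{any} pairwise orthogonal nonzero $e_n$, the algebras $B_n=C^*(1,e_n)$ converge to $\C1$ in the Wijsman topology, so step (d) is false.

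\begin{itemize}
\item Fix $x\in A$ and take $\psi\in A^*$ with $\|\psi\|=1$, $\psi(1)=0$ and $\psi(x)=d(x,\C1)$. Write $\psi$ as a combination of four positive functionals $\varphi_k$ of norm at most $1$, and let $p_n\in A^{**}$ be the support projections of the $e_n$; these are pairwise orthogonal.
\item Each $b\in B_n$ is $\alpha1+y$ with $y=p_nyp_n$ and $\|y\|\leq2\|b\|$ (evaluate at $0\in\sigma(e_n)$). Hence $|\psi(y)|\leq2\|b\|\sum_k\varphi_k(p_n)$.
\item Since $\sum_n\varphi_k(p_n)\leq1$, this bound tends to $0$. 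Restricting to $\|b\|\leq2\|x\|$ gives $\liminf_n d(x,B_n)\geq d(x,\C1)$, and $\C1\subseteq B_n$ gives the reverse inequality.
\end{itemize}

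So the weak nullity you want to exploit in (c) is exactly what makes the sequence converge. Compressing the $e_n$ to spectral pieces keeps them orthogonal and changes nothing.

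Your test in (d) cannot detect this. A lower bound on $d(e_{n_l},B_{n_j})$ for $j\neq l$ only says that a limit $B$ satisfies $d(e_{n_l},B)\geq$ const. The algebra $\C1$ satisfies this, since $d(e,\C1)=\mathrm{Radius}(\sigma(e))=1/2$. Norm separation of the generators gives no Wijsman separation of the algebras. The fallback is also unavailable: local compactness of the closed subspace $\Sub(A)$ does not give local compactness of the ambient hyperspace $\mathcal F^*(A)$.

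What is missing is a mechanism by which a pointwise limit of distance functions fails to be a distance function, together with exact distance computations. The paper gets both by working at $B=C^*(1,a)\cong C(X)$ with $X$ infinite, rather than at $\C1$.

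\begin{itemize}
\item By Proposition~\ref{rem:subspace-closed}, the inclusion $\Sub(B)\to\Sub(A)$ is continuous at the top point $B$, with closed image and continuous inverse. So a compact neighbourhood of $B$ in $\Sub(A)$ yields one in the separable, metrizable space $\Sub(C(X))$.
\item There Pe{\l}czy\'nski's formula (Proposition~\ref{prop:rad-dist-formula}) computes $d(f,C_n)$ exactly for $C_n=\{f: f(p)=f(q),\ f(r)=f(y_n)\}$, where $y_n\to q$ near a non-isolated point $q$.
\item The limit $D_f$ forces any subsequential limit to be $C=\{f: f(p)=f(q)=f(r)\}$. A Tietze function $h$ with $h(p)=-1$, $h(q)=0$, $h(r)=1$ then gives $d(h,C)=1\neq1/2=D_h$, a contradiction.
\end{itemize}

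Your base point $\C1$ loses this reduction. Continuity of the inclusion is only known at the top element, and the induced topology on $\Sub(B)$ can be strictly finer than the intrinsic one. Also, for non-separable $A$, first countability at a single point does not make a compact neighbourhood sequentially compact. The paper avoids this by working inside the separable algebra $C(X)$.
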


\noindent We then characterise exactness as the limit of nuclear algebras in the Wijsman sense as follows.

\begin{thmx}[Corollary~\ref{cor:exact-nuclear-wijsman-approximation}]\label{introthm: exactness}
A separable unital $C^*$-algebra $B$ is exact if and only if 
there exist a separable unital $C^*$-algebra $A\supseteq B$ and
nuclear $B_n\in\Sub(A)$ such that $B_n\to B$ in the Wijsman topology.
\end{thmx}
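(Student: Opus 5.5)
The plan is to prove the two implications separately, working throughout with the metric description of the Wijsman topology: $B_n\to B$ in $\Sub(A)$ if and only if $d(a,B_n)\to d(a,B)$ for every $a\in A$. Concretely, I will use the two halves of this condition:
\begin{enumerate}
\item \emph{lower half:} every $b\in B$ is a limit of elements $b_n\in B_n$;
\item \emph{upper half:} $d(a,B)\le\liminf_n d(a,B_n)$ for all $a\in A$.
\end{enumerate}
In particular, the upper half implies that any norm limit of a sequence $x_n\in B_n$ lies in $B$.

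\emph{Converse direction (nuclear limits are exact).} Suppose $B_n\in\Sub(A)$ are nuclear and $B_n\to B$. Let
\[
C=\{(x_n)\in \textstyle\prod_n B_n : (x_n) \text{ converges in } A\},
\]
and let $\pi\colon C\to A$ be $\pi((x_n))=\lim_n x_n$. By the upper half, $\pi(C)\subseteq B$, and by the lower half, $\pi(C)=B$. Hence $B\cong C/\bigoplus_n B_n$. This quotient picture is too coarse on its own, since $C$ need not be exact. I therefore plan to argue locally, using the characterisation of exactness by nuclear embeddability of finite-dimensional operator subsystems.

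Fix a finite-dimensional operator system $E\subseteq B$ and $\varepsilon>0$. Using the lower half, pick a basis of $E$ and approximants in $B_n$ for each basis element. This gives linear maps $\theta_n\colon E\to B_n$ with $\|\theta_n-\iota_E\|\to0$ in norm. Since $E$ is finite-dimensional, they also converge in cb-norm, up to a constant depending on $\dim E$. After a small perturbation fixing the unit (and symmetrising to make them self-adjoint), we can compose with the c.p.\ approximations $B_n\to M_k\to B_n$ coming from nuclearity of $B_n$. Then $\theta_n$ followed by the inclusion $B_n\subseteq A\subseteq B(H)$ almost factors through matrices. Passing to the limit, we obtain that $E\hookrightarrow B(H)$ is approximately nuclear in cb-norm. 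By Kirchberg's characterisation, this property for all $E$ yields exactness of $B$.

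The delicate point in this direction is that the $\theta_n$ are only approximately c.p. One needs a Wittstock/Arveson-type correction, for instance the fact that a unital self-adjoint map close in cb-norm to a complete order embedding can be corrected to a u.c.p.\ map with small cb-error. This requires the target to be injective, so I would work with $B(H)$ or $B_n^{**}$ as the target and invoke the ``nuclearly embeddable'' form of exactness.

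\emph{Forward direction (exact algebras are nuclear limits).} Let $B$ be exact and separable. By Kirchberg's embedding theorem, $B\subseteq\mathcal O_2$, and there are c.c.p.\ maps $\varphi_n\colon B\to M_{k_n}$ and $\psi_n\colon M_{k_n}\to\mathcal O_2$ with $\psi_n\varphi_n\to\id_B$ pointwise. The plan is to construct a separable $A$ containing $B$ and nuclear subalgebras $B_n$ that ``move'' towards $B$. I would try $A\subseteq C(\overline{\N},\mathcal O_2)$, or a separable subalgebra of $\ell^\infty(\mathcal O_2)$ containing the constant copy of $B$. The $B_n$ would be nuclear subalgebras, for instance copies of $\mathcal O_2$ or hereditary nuclear pieces supported near the $n$-th coordinate, built from the $\psi_n(M_{k_n})$.

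The lower half of Wijsman convergence should then follow from $\psi_n\varphi_n\to\id$. The upper half requires that elements of $A$ far from $B$ remain far from $B_n$, and I expect this to be the main obstacle. Nuclear subalgebras large enough to approximate $B$ tend also to approximate much more, since $\mathcal O_2$ approximates everything. So the ambient $A$ must be chosen small enough that $B_n$ sees essentially only $B$ asymptotically. My first attempt would be to take $A$ generated by $B$ and the $B_n$, realised in a continuous field over $\overline{\N}$, and to compute $d(a,B_n)$ using the evaluation at $\infty$ to control the $\liminf$.
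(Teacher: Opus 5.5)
\paragraph{The forward direction is not proved.} Your converse direction (nuclear limits are exact) can be completed, though it is heavier than necessary; see the end. The forward direction, however, has a genuine gap. You fix no $A$ and no $B_n$, and you leave open the upper half $d(a,B)\le\liminf_n d(a,B_n)$, which is the entire content of that implication. The candidates you mention do not work as stated:
\begin{itemize}
\item A unital algebra built from a hereditary piece supported near the $n$th coordinate of $C(\overline{\N},\mathcal O_2)$, such as $\C1+J_n$, cannot approximate a non-scalar constant $b\in B$ at the coordinate $\infty$. So even the lower half fails.
\item The set $\psi_n(M_{k_n})$ is only an operator system. The $C^*$-algebra it generates in $\mathcal O_2$ has no reason to be nuclear, nor to stay close to $B$.
\item Controlling the $\liminf$ through $\mathrm{ev}_\infty$ alone is circular. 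The only bound it gives is $d(a,B_n)\ge d(\mathrm{ev}_\infty(a),\mathrm{ev}_\infty(B_n))$. You would therefore need the nuclear algebras $\mathrm{ev}_\infty(B_n)$ to converge to $B$ in $\Sub(\mathrm{ev}_\infty(A))$, which is the original problem again.
\end{itemize}
Note also that $\psi_n\varphi_n\to\id_B$ is not needed for the lower half. Since $B\subseteq\mathcal O_2$ and $\mathcal O_2$ is nuclear, you may take $B_n\supseteq B$, and then $d(b,B_n)=0$ for every $b\in B$.

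\paragraph{The missing idea.} Take the $B_n$ to be copies of $D=\mathcal O_2$ that all contain $B$ but are twisted freely relative to one another. The paper uses the full amalgamated free product $A=D*_B\bigl(*_{n\geq1,B}(B\oplus B)\bigr)$. This is the universal unital $C^*$-algebra generated by $D$ and self-adjoint unitaries $u_n$ commuting with $B$. Set $B_n=u_nDu_n\cong D$; then $B\subseteq B_n$ gives $d(y,B_n)\le d(y,B)$.

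For the reverse inequality, take $y\in A_N=C^*(D,u_1,\ldots,u_N)$ and $n>N$. Rieffel's theorem gives a representation $\pi$ of $A_N$ and a self-adjoint unitary $U\in\pi(B)'$ with $d(y,B)=\tfrac12\|\pi(y)-U\pi(y)U\|$. The universal property yields representations $\rho_0,\rho_1$ of $A$ extending $\pi$ and $U\pi(\cdot)U$ respectively, with $\rho_0(u_n)=1$, $\rho_1(u_n)=U$, and $\rho_i(u_j)=1$ for the remaining $j>N$. These agree on $u_nDu_n$. Hence $d(y,B)=\tfrac12\|(\rho_0-\rho_1)(y-z)\|\le\|y-z\|$ for all $z\in B_n$, and density of $\bigcup_NA_N$ concludes.

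\paragraph{The converse.} Your perturbation of the non-c.p.\ maps $\theta_n$ can be avoided entirely. Extend the c.c.p.\ map $\phi_0\colon B_n\to M_k$ to $A$ by Arveson's theorem and restrict it to $B$. For $b'\in B_n$ close to $b\in F$, this gives $\|\psi\phi(b)-b\|\le2\|b-b'\|+\|\psi\phi_0(b')-b'\|$. So $B\hookrightarrow A\subseteq\mathcal B(H)$ is nuclear and $B$ is exact, using only the lower half of Wijsman convergence.
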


 \noindent Let us now focus on the reduced $C^*$-algebra $C_r^*(\Gamma)$ of a countable discrete group $\Gamma$, viewed with the conjugation action $\Gamma \curvearrowright \Sub(C_r^*(\Gamma))$. We define a \textbf{random subalgebra} (short RA) of $C^*_r(\Gamma)$ as a Borel probability measure on $\Sub(C_r^*(\Gamma))$. We study three categories of these measures: 
\begin{itemize}
    \item quasi-invariant (qIRA), 
    \item $\mu$-stationary ($\mu$-SRA) for a probability measure $\mu$ on $\Gamma$, 
    \item invariant measures (IRA or $C^*$-IRA).
\end{itemize}

\noindent First, we relate our notion to the well-studied group setup. In particular we show in Theorem~\ref{thm:reduced-group-map} that the map $\Sub(\Gamma)\to\Sub(C^*_r(\Gamma))$, $\Lambda\mapsto C^*_r(\Lambda)$ is Borel, injective and $\Gamma$-equivariant. Hence, every IRS can be pushed forward to a $C^*$-IRA.

\vspace{0.2cm}

\noindent Also, we show in Corollary~\ref{cor:bi-commutant-map-borel} that the map $\Sub(C^*_r(\Gamma))\to\Sub({\rm L}(\Gamma))$, $B\mapsto B''$ is Borel and $\Gamma$-equivariant so every $C^*$-IRA can be push forward to a von Neumann IRA of ${\rm L}(\Gamma)$ (in a non-injective way in general). The same holds for the other two categories of measures.
$C^*$-RAs which do not come from groups we call \textbf{exotic}. Von Neumann IRAs that do not arise from $C^*$-algebras we call \textbf{super-exotic}. In Section~\ref{sec:exotic} we provide characterisations and examples of groups with exotic and super-exotic qIRA and IRA, respectively. In particular, the reduced $C^*$-algebra of the  ''traffic'' lamplighter group $\mathbb{Z}\wr (\mathbb{Z}/3\mathbb{Z})$ hosts exotic IRAs. On ${\rm L}(\mathbb{Z}\wr \mathbb{Z})$ there are super-exotic IRAs (Corollary~\ref{cor:nonatomic-exotic}). In Section~\ref{sec:noexotic}, we provide examples of groups $\Gamma$ for which $C^*_r(\Gamma)$ has no exotic IRA.

\begin{thmx}[Corollary~\ref{cor:hyperbolic_no_exotic}]\label{thm:hyperbolic_no_exotic}
If $\Gamma$ is a torsion-free non-amenable hyperbolic group, then $C_r^*(\Gamma)$ has no exotic IRA.
\end{thmx}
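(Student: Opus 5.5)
An IRA $\nu$ on $\Sub(C^*_r(\Gamma))$ is non-exotic exactly when it is the push-forward of an IRS under $\Lambda\mapsto C^*_r(\Lambda)$, which is Borel, injective and $\Gamma$-equivariant by Theorem~\ref{thm:reduced-group-map}. So the plan is to show that $\nu$-almost every $B\in\Sub(C^*_r(\Gamma))$ is of the form $C^*_r(\Lambda_B)$ for a subgroup $\Lambda_B$ depending measurably on $B$. Then $\nu$ is the image of $(B\mapsto\Lambda_B)_*\nu$. This image is automatically an IRS, because $\Lambda_{\lambda_g B\lambda_g^*}=g\Lambda_B g^{-1}$ by injectivity of the map.

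The natural candidate is $\Lambda_B=\{g\in\Gamma:\lambda_g\in B\}$. It is clearly a subgroup, and the map $B\mapsto\Lambda_B$ is Borel: $\lambda_g\in B$ iff $d(\lambda_g,B)=0$, and distance functions are Wijsman-continuous. We always have $C^*_r(\Lambda_B)\subseteq B$, so the content is the reverse inclusion almost surely.

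\textbf{Step 1: trivial and central cases.} A torsion-free non-amenable hyperbolic group has trivial amenable radical and is $C^*$-simple, and every nontrivial element has an infinite cyclic centraliser, hence infinite conjugacy class. I would use the characterisation of $C^*$-simplicity via IRAs established in the paper. For a $C^*$-simple group I expect it to say that any IRA is supported on subalgebras $B$ satisfying one of two conditions: either $B=\C 1$, or the canonical trace $\tau$ restricted to $B$ enjoys a Powers-type averaging property. This is the $C^*$-analogue of the Kennedy/Hartman–Kalantar IRS result that amenable IRSs sit in the amenable radical.

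\textbf{Step 2: invariance under the conditional expectation.} The key claim is that for $\nu$-a.e.\ $B$ the canonical expectation $E_\Lambda$ onto group subalgebras behaves well with respect to $B$. Concretely, I want $x\in B$ to imply $\tau(x\lambda_g^*)\lambda_g\in B$ for every $g\in\Gamma$. Once this holds, Fejér-type arguments fail in general, but the following suffices. If $\tau(x\lambda_g^*)\neq0$ for some $x\in B$, then $\lambda_g\in B$, so $g\in\Lambda_B$. Hence every $x\in B$ has Fourier support in $\Lambda_B$, and therefore $x\in C^*_r(\Gamma)\cap {\rm L}(\Lambda_B)=C^*_r(\Lambda_B)$. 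The last equality uses the canonical expectation $C^*_r(\Gamma)\to C^*_r(\Lambda)$ together with approximation: $E_\Lambda$ is contractive and fixes elements supported in $\Lambda$.

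To get the Fourier-coefficient statement, I would use hyperbolicity. Fix $g\neq e$ in the support of some $x\in B$. Following the proof that non-elementary hyperbolic groups have Powers' averaging property, take a loxodromic $h$ in $\Gamma$ independent of $g$ and consider the averages
\[
\frac1N\sum_{k=1}^N \lambda_{h^k}\,x\,\lambda_{h^k}^*.
\]
By invariance of $\nu$ and a Birkhoff/Poincaré recurrence argument, $B$ is almost surely close to $\lambda_{h^k}B\lambda_{h^k}^*$ along a subsequence. Combining this with the dynamics of $h$ on the Gromov boundary, I would isolate the components of $x$ lying in the maximal cyclic subgroup containing $g$. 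This yields elements of $B$ that are approximately $\tau(x\lambda_g^*)\lambda_g$ modulo terms along the cyclic subgroup $\langle g_0\rangle\ni g$.

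\textbf{Main obstacle.} Recurrence only gives closeness of $B$ to conjugates along subsequences, not membership of the averages in $B$, and Wijsman convergence does not allow passing elements into $B$ directly. I would handle this by working on a $\nu$-conull set where $d(y,B)$ is $h$-invariant in the ergodic-average sense for a countable dense set of $y$. Then I would apply the mean ergodic theorem in $L^2(\nu)$ to the functions $B\mapsto d(\text{average},B)$. Finally, inside the abelian algebra $C^*(\lambda_{g_0})\cong C(\mathbb{T})$, I would use functional calculus to extract $\lambda_g$ itself from $B\cap C^*(\lambda_{g_0})$. This step exploits that $B\cap C^*(\lambda_{g_0})$, being almost surely invariant under conjugation by $g_0$, is determined by a closed subset of the dual.
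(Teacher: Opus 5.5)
\textbf{The gap is in the last step of your Step~2: you cannot extract $\lambda_g$ inside the abelian algebra $C^*(\lambda_t)$.} Your reduction is the same as the paper's. With $\Lambda_B=\{g:\lambda_g\in B\}$, it suffices to show that for $\nu$-a.e.\ $B$, $a\in B$ and $\tau_0(a\lambda_g^*)\neq0$ force $\lambda_g\in B$. The argument you give for this key claim does not work.

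\emph{The averaging targets the wrong subgroup.} The $g$-th Fourier coefficient of $\frac1N\sum_{k=1}^N\lambda_{h^k}x\lambda_{h^k}^*$ is the average of the coefficients of $x$ at $h^{-k}gh^k$. When $h$ is independent of $g$, these group elements are pairwise distinct, so by Cauchy--Schwarz the average is at most $N^{-1/2}\|x\|_2$. So averaging over powers of $h$ destroys the $g$-coefficient rather than isolating it. To get $E_{\langle t\rangle}(a)\in B$, with $t$ the primitive root of $g$, you must conjugate by powers of $t$ itself.

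\emph{No mean ergodic theorem is needed there.} Your proposal controls averages of $d(y,\lambda_{h^k}B\lambda_{h^k}^*)$, which is not the distance from an averaged element to $B$. The paper argues directly. Poincar\'e recurrence on a countable base makes $\nu$-a.e.\ $B$ boomerang. Along a return sequence $m_j$ for $t$ one has $d(\lambda_{t^{-m_j}}a\lambda_{t^{m_j}},B)=d(a,\lambda_{t^{m_j}}B\lambda_{t^{-m_j}})\to0$. The Amrutam--Jiang averaging result approximates $E_{\langle t\rangle}(a)$ by averages of these conjugates along any tail of $(m_j)$. Since $B$ is closed and convex, this gives $E_{\langle t\rangle}(a)\in B$.

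Your Step~1 cites a dichotomy for IRAs that the paper does not contain. The paper's $C^*$-simplicity criterion concerns nuclear $\mu$-SRAs. Step~1 is also never used.

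\emph{Why the final step fails.} Suppose you know $x=E_{\langle t\rangle}(a)\in B\cap C^*(\lambda_t)$ with $\tau_0(x\lambda_g^*)\neq0$. No argument confined to $C^*(\lambda_t)\cong C(\mathbb T)$ can yield $\lambda_g\in B$:
\begin{itemize}
\item Conjugation by $\lambda_t$ is trivial on this abelian algebra, so the invariance you invoke carries no information.
\item $B\cap C^*(\lambda_t)$ may be any unital subalgebra $C(\mathbb T/R)$. For example, $C^*(1,\mathrm{Re}\,\lambda_t)$ contains an element with $t$-coefficient $1/2$ but does not contain $\lambda_t$. The paper itself uses this example to produce non-geometric subalgebras.
\item A description by frequencies would need invariance under the dual rotation $\lambda_t\mapsto z\lambda_t$, which $\nu$ does not provide.
\end{itemize}

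\emph{The missing idea is to use non-commutativity.}
\begin{itemize}
\item Choose $h$ with $t,h$ free, take a return sequence $m_j$ for $h$, and work in the asymptotic algebra $\mathcal B=\{(a_j):d(a_j,B)\to0\}$. Then $(x\lambda_{h^{-m_j}}x\lambda_{h^{m_j}})_j\in\mathcal B$.
\item Put $w_j=gh^{-m_j}gh^{m_j}$. The expectation onto the cyclic centraliser $C_\Gamma(w_j)$ preserves boomerang $B$.
\item By the isolation lemma, this expectation maps the element above to $\tau_0(x\lambda_g^*)^2\lambda_{w_j}$, so $(\lambda_{w_j})_j\in\mathcal B$.
\item Multiply by $(\lambda_{h^{-m_j}}x^*\lambda_{h^{m_j}})_j$ and apply $E_{\langle t\rangle}$, using $\langle t\rangle\cap h^{-m_j}\langle t\rangle h^{m_j}=\{e\}$. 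This gives the constant sequence $\overline{\tau_0(x\lambda_g^*)}\lambda_g$ in $\mathcal B$, hence $\lambda_g\in B$.
\end{itemize}
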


\noindent Theorem \ref{thm:hyperbolic_no_exotic} generalises \cite[Theorem~4.4]{AmrutamJiang2026}, which asserts that every unital $\Gamma$-invariant $C^*$-subalgebra of $C_r^*(\Gamma)$ is $C_r^*(N)$ for some normal subgroup $N\triangleleft\Gamma$, by replacing invariance of the subalgebra with invariance of its distribution. The deterministic case of \cite{AmrutamJiang2026} can be deduced from Theorem \ref{thm:hyperbolic_no_exotic} by considering Dirac measures.

\vspace{0.2cm}

\noindent Our main applications of random subalgebras concern nuclearity and simplicity.
Hartman--Kalantar \cite{hartman2023stationary} characterised $C^*$-simplicity of $\Gamma$ in terms of unique stationary states. With the notion of probability measures on $\Sub(C_r^*(\Gamma))$, we can provide a probabilistic analogue as follows.  A probability measure is called \textbf{nuclear} if the set of nuclear subalgebras is a co-null set.

\begin{thmx}[Theorem~\ref{thm:simple}]\label{introthm:simple} 
$\Gamma$ is $C^*$-simple if and only if there exists
$ \mu\in\Prob(\Gamma)$ such that $\delta_{\mathbb C1}$ is  the unique nuclear $\mu$-SRA. 
\end{thmx}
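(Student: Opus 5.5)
The plan is to prove the two implications separately. Throughout I rely on three ingredients: the Hartman--Kalantar characterisation \cite{hartman2023stationary} ($\Gamma$ is $C^*$-simple if and only if there is $\mu\in\Prob(\Gamma)$ for which the canonical trace $\tau_0$ is the unique $\mu$-stationary state on $C^*_r(\Gamma)$), Kennedy's characterisation of $C^*$-simplicity by the absence of nontrivial amenable uniformly recurrent subgroups (URS), and Theorem~\ref{thm:reduced-group-map}.

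\emph{Converse direction (not $C^*$-simple $\Rightarrow$ no such $\mu$).} Suppose $\Gamma$ is not $C^*$-simple and fix an arbitrary $\mu\in\Prob(\Gamma)$.
\begin{enumerate}
\item By Kennedy's theorem there is a nontrivial amenable URS $X\subseteq\Sub(\Gamma)$. This is a compact, minimal, $\Gamma$-invariant set, and it consists of amenable subgroups, since Chabauty limits of amenable subgroups are amenable.
\item Since $X$ is compact, the convolution operator $\nu\mapsto\mu*\nu$ has a fixed point in $\Prob(X)$ (Kakutani/Markov). Call this $\mu$-stationary measure $\eta$.
\item Push $\eta$ forward along the Borel, injective, $\Gamma$-equivariant map $\Lambda\mapsto C^*_r(\Lambda)$ of Theorem~\ref{thm:reduced-group-map}. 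Equivariance makes the image a $\mu$-SRA.
\item The image is nuclear, because $C^*_r(\Lambda)$ is nuclear for amenable $\Lambda$.
\item The image is not $\delta_{\C1}$. Indeed $\{e\}\notin X$, since $X$ is minimal and different from $\{\{e\}\}$, and the map is injective.
\end{enumerate}
Hence for every $\mu$ there is a nuclear $\mu$-SRA other than $\delta_{\C1}$.

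\emph{Direct direction ($C^*$-simple $\Rightarrow$ such a $\mu$ exists).} Take $\mu$ given by Hartman--Kalantar, so that $\tau_0$ is the unique $\mu$-stationary state on $C^*_r(\Gamma)$, and let $\nu$ be a nuclear $\mu$-SRA. The strategy is to turn $\nu$ into a $\mu$-stationary state that differs from $\tau_0$ unless $\nu=\delta_{\C1}$.
\begin{enumerate}
\item Construct a Borel, $\Gamma$-equivariant assignment $B\mapsto\varphi_B$, defined on nuclear $B\in\Sub(C^*_r(\Gamma))$ and taking values in states of $C^*_r(\Gamma)$ (or of a $\Gamma$-$C^*$-algebra containing it, such as $C(\partial_F\Gamma)\rtimes_r\Gamma$).
\item Require that $\varphi_B\neq\tau_0$, in a way detectable on $B$, whenever $B\neq\C1$.
\item Form the barycentre $\varphi=\int\varphi_B\,d\nu(B)$. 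It is $\mu$-stationary because $\nu$ is, so $\varphi=\tau_0$.
\item Use that $\tau_0$ is an extreme point among $\mu$-stationary states. Then $\varphi_B=\tau_0$ for $\nu$-a.e.\ $B$, which forces $\nu=\delta_{\C1}$.
\end{enumerate}
Measurability of $B\mapsto\varphi_B$ should follow from the Wijsman description of the Borel structure, using a dense sequence of Borel selectors.

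\emph{Main obstacle.} The hard part is step 1 of the direct direction: producing $\varphi_B$ from nuclearity in a canonical, hence equivariant, way. My first attempt would be to use nuclearity of $B$ to extend an amenable trace or ucp data on $B$ to the injective object $C(\partial_F\Gamma)\rtimes_r\Gamma$ via Arveson extension. I would then compose with the unique $\mu$-stationary boundary measure, mirroring how Hartman--Kalantar exploit the freeness of $\Gamma\curvearrowright\partial_F\Gamma$. Freeness should in turn force any nuclear algebra that is nontrivially supported to carry a stationary state different from $\tau_0$. If canonicity fails, I would replace the single state $\varphi_B$ by the weak$^*$-compact convex set of admissible extensions and take a measurable equivariant selection.
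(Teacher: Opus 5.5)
Your converse direction is correct and matches the paper. The paper invokes Hartman--Kalantar's result that a non-$C^*$-simple group has, for every $\mu$, an amenable $\mu$-stationary random subgroup different from $\delta_{\{e\}}$, which is exactly what you obtain from Kennedy's URS criterion and a fixed point of $\nu\mapsto\mu*\nu$ on $\Prob(X)$. It then pushes this forward along $H\mapsto C^*_r(H)$, as you do.

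The direct direction has a genuine gap, located where you say it is: the assignment $B\mapsto\varphi_B$ is never constructed, and the suggested route does not supply it. Arveson extensions are highly non-unique. An equivariant selection would require, at each $B$, an admissible state fixed by the stabiliser of $B$, and nothing in your sketch provides one. Your step~4 also fails as stated. Since $\tau_0$ is the \emph{unique} $\mu$-stationary state, its extremality in that set is vacuous. Because the individual $\varphi_B$ are not stationary, the identity $\int\varphi_B\,d\nu(B)=\tau_0$ carries no pointwise information: the Haar state on $C^*_r(\Z)\simeq C(\mathbb T)$ is an average of point evaluations. What is needed is a \emph{sign} condition on $\varphi_B$, not extremality.

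The paper's missing idea supplies both the construction and the sign condition.
\begin{enumerate}
\item Pass to $N=B''\subseteq{\rm L}(\Gamma)$, which is amenable by Theorem~\ref{ThmNuc}, and set $\psi_N(\lambda_g)=\|E_N(\lambda_g)\|_2^2\geq0$.
\item This is the coefficient at $1\otimes_N1$ of the representation $g\mapsto\lambda_g(\cdot)\lambda_g^*$ on ${\rm L}^2(M)\otimes_N{\rm L}^2(M)$.
\item Amenability of $N$ makes this bimodule weakly contained in the coarse one. The conjugation representation of the coarse bimodule is $\lambda\otimes\rho\simeq\lambda\otimes1$ by Fell absorption, so $\psi_N$ extends to a state on $C^*_r(\Gamma)$.
\item The map $N\mapsto\psi_N$ is canonically equivariant, with $\psi_{sN}=\psi_N\circ\Ad(\lambda_{s^{-1}})$, and continuous for the Effros--Mar\'echal topology. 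Since $B\mapsto B''$ is Baire class~$1$, the barycentre of a nuclear $\mu$-SRA $\nu$ is a $\mu$-stationary state, hence equals $\tau_0$.
\item Thus $\int\|E_{B''}(\lambda_g)\|_2^2\,d\nu(B)=0$ for every $g\neq e$. Nonnegativity of the integrand gives, $\nu$-almost surely, $E_{B''}(\lambda_g)=0$ for all $g\neq e$.
\item Then every $x\in B''$ has vanishing Fourier coefficients off $e$, since $\tau_0(\lambda_g^*x)=\tau_0(E_{B''}(\lambda_g)^*x)=0$. Hence $B''=\C1$, and so $B=\C1$ almost surely.
\end{enumerate}
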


\noindent Given an IRA $\nu$ of $C_r^*(\Gamma)$, we show that then there exists a unique maximal $C^*$-subalgebra $A\in\Sub(C_r^*(\Gamma))$ such that $\nu(\Sub(\Gamma))=1$ (Proposition~\ref{prop:alg sup unique}). We denote it by $\langle \nu \rangle$. If $C_r^*(\Gamma)=\langle \nu \rangle$, then we call $\nu$ a \textbf{spanning} IRA.

\vspace{0.2cm}

\noindent It was shown by Bader--Duchesne--Lécureux \cite{BDL} that every amenable IRS is supported on the amenable radical $\Rad(\Gamma)$. This was extended to group von Neumann algebras in \cite{AHO25} and to cross-products by \cite{Zhou}. We provide the result for reduced $C^*$-algebras.
\begin{thmx}[Corollary \ref{thm:maximal-nuclear-invariant}, Theorem~\ref{thm: nuclear IRA}, Corollary \ref{cor:nuclear-ira-regular}]\label{introthm:nuclear IRA} 
Let $\Gamma$ be a countable discrete group. The following holds.
\begin{enumerate}
    \item The unique maximal $\Gamma$-invariant unital nuclear $C^*$-subalgebra of $C_r^*(\Gamma)$ is $C_r^*(\Rad(\Gamma))$.
    \item If $\nu$ is a nuclear IRA of $C_r^*(\Gamma)$, then $\langle \nu \rangle$ lies inside $C_r^*(\Rad(\Gamma))$. In particular, if $\nu$ is spanning and nuclear, then $\Gamma$ is amenable.
    \item The amenable radical is trivial if and only if the only nuclear $\IRA$ of $C_r^*(\Gamma)$ is  $\delta_{\C1}$.
\end{enumerate}
\end{thmx}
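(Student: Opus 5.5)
Part (1) reduces to showing that every $\Gamma$-invariant unital nuclear $C^*$-subalgebra $B\subseteq C_r^*(\Gamma)$ lies in $C_r^*(\Rad(\Gamma))$. The algebra $C_r^*(\Rad(\Gamma))$ is nuclear because $\Rad(\Gamma)$ is amenable, and it is $\Gamma$-invariant because $\Rad(\Gamma)$ is normal, so once this containment is known it is the unique maximal such subalgebra.

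For the containment I would use the von Neumann picture. The bicommutant $B''\subseteq {\rm L}(\Gamma)$ is $\Gamma$-invariant, and it is injective (amenable) because $B$ is nuclear: $B''$ is the weak closure of $B$ in the standard representation, and a nuclear $C^*$-algebra has injective weak closure in any representation. The deterministic case of the von Neumann result of \cite{AHO25}, namely that an amenable $\Gamma$-invariant von Neumann subalgebra of ${\rm L}(\Gamma)$ is contained in ${\rm L}(\Rad(\Gamma))$, then gives $B\subseteq B''\cap C_r^*(\Gamma)\subseteq {\rm L}(\Rad(\Gamma))\cap C_r^*(\Gamma)$.

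It remains to show ${\rm L}(\Lambda)\cap C_r^*(\Gamma)=C_r^*(\Lambda)$ for a subgroup $\Lambda$. Let $E_\Lambda$ be the trace-preserving conditional expectation ${\rm L}(\Gamma)\to{\rm L}(\Lambda)$. It is norm-continuous, and it maps $C_r^*(\Gamma)$ onto $C_r^*(\Lambda)$, since it sends $\lambda_g$ to $\lambda_g$ when $g\in\Lambda$ and to $0$ otherwise, and by density and continuity this passes to the whole algebra. Hence any $x\in {\rm L}(\Lambda)\cap C_r^*(\Gamma)$ satisfies $x=E_\Lambda(x)\in C_r^*(\Lambda)$. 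This proves (1).

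For (2), let $\nu$ be a nuclear IRA. By Corollary~\ref{cor:bi-commutant-map-borel}, the push-forward $\tilde\nu$ of $\nu$ under $B\mapsto B''$ is a von Neumann IRA, and by the fact above it is supported on amenable subalgebras. The random version of the result of \cite{AHO25} states that amenable von Neumann IRAs are supported on subalgebras of ${\rm L}(\Rad(\Gamma))$. Thus for $\nu$-almost every $B$ we have $B\subseteq B''\cap C_r^*(\Gamma)\subseteq C_r^*(\Rad(\Gamma))$, using the equality from the previous paragraph. The set $\Sub(C_r^*(\Rad(\Gamma)))$ is closed, hence Borel, and it has full $\nu$-measure. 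By maximality of $\langle\nu\rangle$ in Proposition~\ref{prop:alg sup unique}, which is the largest algebra $A$ with $\nu(\Sub(A))=1$, we get $\langle\nu\rangle\subseteq C_r^*(\Rad(\Gamma))$.

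If moreover $\nu$ is spanning, then $C_r^*(\Gamma)=C_r^*(\Rad(\Gamma))$. Applying $E_{\Rad(\Gamma)}$ to each $\lambda_g$ shows $\Gamma=\Rad(\Gamma)$, so $\Gamma$ is amenable.

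For (3), suppose first that $\Rad(\Gamma)$ is trivial. By (2), every nuclear IRA is supported on $\Sub(\C1)=\{\C1\}$, so it equals $\delta_{\C1}$. Conversely, if $\Rad(\Gamma)\neq\{e\}$, then $\delta_{C_r^*(\Rad(\Gamma))}$ is a nuclear IRA different from $\delta_{\C1}$. It is invariant because $\Rad(\Gamma)$ is normal, and it differs from $\delta_{\C1}$ by injectivity of $\Lambda\mapsto C_r^*(\Lambda)$ (Theorem~\ref{thm:reduced-group-map}).

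The main obstacle is importing the random von Neumann statement of \cite{AHO25} in exactly the form needed. One must check that nuclearity of $B$ passes to amenability of $B''$ inside ${\rm L}(\Gamma)$ in a measurable way, and that the hypotheses in \cite{AHO25} match: either an amenable IRA, or a $\Gamma$-invariant amenable subalgebra. If \cite{AHO25} is only available for IRAs, then (1) follows from (2) applied to Dirac measures. The containment $\langle\delta_B\rangle\supseteq B$ holds trivially.
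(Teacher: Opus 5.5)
Your argument is correct. It is not the paper's main proof, though it is exactly the alternative the paper records in the remarks after Theorem~\ref{thm: nuclear IRA} and Corollary~\ref{thm:maximal-nuclear-invariant}.

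\textbf{The paper's route.} The paper proves (2) directly and obtains (1) as the Dirac case.
\begin{enumerate}
\item It pushes $\nu$ forward by $B\mapsto B''$.
\item It integrates the positive-definite functions $g\mapsto\|E_{B''}(\lambda_g)\|_2^2$ to obtain a state $\psi_\nu$ (Lemma~\ref{lem:stationary-state-from-nuclear-sra}). Amenability of $B''$ is what makes $\psi_\nu$ extend to $C_r^*(\Gamma)$, via weak containment in the coarse correspondence.
\item Invariance of $\nu$ makes $\psi_\nu$ tracial.
\item It then applies the deterministic theorem of \cite{breuillard2017c}: tracial states on $C_r^*(\Gamma)$ vanish on $\lambda_g$ for $g\notin\Rad(\Gamma)$. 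This forces $E_{B''}(\lambda_g)=0$ almost surely, which kills the Fourier coefficients of $B$ outside $\Rad(\Gamma)$.
\end{enumerate}

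\textbf{Your route.} You import \cite[Theorems~A and~B]{AHO25} as black boxes. You transfer them through $B\mapsto B''$ together with ${\rm L}(\Lambda)\cap C_r^*(\Gamma)=C_r^*(\Lambda)$, which you prove correctly via $E_\Lambda$.

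\textbf{Comparison.} Your route is shorter once those von Neumann results are granted. The paper's route needs only a deterministic statement about traces. It also reuses the same map $\nu\mapsto\psi_\nu$ that drives Theorem~\ref{thm:simple}, with unique stationary states in place of traces. As a by-product, it reproves \cite[Theorem~B]{AHO25}. Both arguments in fact work under the weaker hypothesis that $B''$ is amenable almost surely.

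\textbf{A slip to fix.} Proposition~\ref{prop:alg sup unique} characterises $\langle\nu\rangle$ as the \emph{smallest} $N$ with $\nu(\Sub(N))=1$, not the largest; the largest such algebra is always $C_r^*(\Gamma)$ itself. It is minimality that turns $\nu(\Sub(C_r^*(\Rad(\Gamma))))=1$ into $\langle\nu\rangle\subseteq C_r^*(\Rad(\Gamma))$. Read literally, ``largest'' would give the reverse inclusion.
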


\subsection*{Organisation of the paper}
Section~\ref{sec: top} is devoted to topological results, including the compactness characterisation of $\Sub(A)$ in Theorem~\ref{introthm: compact} and topological properties of the maps from subgroups to $C^*$-subalgebras and from $C^*$-subalgebras to von Neumann subalgebras are provided in Theorem~\ref{thm:reduced-group-map} and Corollary~\ref{cor:bi-commutant-map-borel}, respectively.
In Section~\ref{sec: nuc} we show that the space of nuclear $C^*$-subalgebras is a $G_{\delta}$-set  and prove the exactness characterisation of Theorem~\ref{introthm: exactness}. The introduction of probability measures on the space $\Sub(A)$ is provided in Section~\ref{sec:random-rigidity}, which also contains the simplicity characterisation of Theorem~\ref{introthm:simple} and the nuclearity statement of Theorem~\ref{introthm:nuclear IRA}. In Section~\ref{sec:exotic}, we investigate the existence of exotic and super-exotic IRAs. Finally, Section~\ref{sec:noexotic} provides the rigidity result of Theorem~\ref{thm:hyperbolic_no_exotic}.
\subsection*{AI statement} No AI was used for this work.

\section{\texorpdfstring{The Wijsman space of $C^*$-subalgebras}{The Wijsman space of C*-subalgebras}}\label{sec: top}

\subsection{The Wijsman and lower topologies} Let $(X,d)$ be a metric space and let $\mathcal F^*(X)$ be the set of its
non-empty closed subsets. The \textbf{Wijsman topology} on $\mathcal F^*(X)$ is the
coarsest topology for which every map $F\longmapsto d(x,F)=\inf_{y\in F}d(x,y)$, $x\in X$,
is continuous.

\medskip

\noindent The \textbf{lower topology} on $\mathcal F^*(X)$ is generated by the sets
$$
\mathcal I_U=\{F\in\mathcal F^*(X):F\cap U\neq\emptyset\},
$$
where $U$ ranges over the open subsets of $X$. Equivalently, it is the
coarsest topology for which every map $F\mapsto d(x,F)$ is upper
semicontinuous \cite[Proposition~3.6]{fima2024michael}.
In particular, the lower topology is coarser than the
Wijsman topology: each distance map is Wijsman-continuous, hence upper
semicontinuous. Thus every lower-open set is Wijsman-open. The same
comparison holds for the topologies induced on any subset of
$\mathcal F^*(X)$.

\medskip

\noindent Following the convention of \cite[Definition~24.1]{Kechris1995},
a map $f:X\to Y$ between topological spaces is of \textbf{Baire class~1} if $f^{-1}(U)$ is
$F_\sigma$ for every open $U\subseteq Y$, or equivalently if the inverse
image of every closed set is $G_\delta$. Here $F_\sigma$ means a countable
union of closed sets, and $G_\delta$ a countable intersection of open sets.

\smallskip
\noindent The next Lemma is a special case of \cite[Theorem~5.1(b)]{beerPolishtopologyclosed1991}, when $X$ is supposed to be metric.
\begin{lemma}\label{lem:lower-to-borel}
Let $X$ be a topological space, $Y$ be a separable metric space, and
$f:X\to\mathcal F^*(Y)$ be a map. If $f$ is continuous for the lower topology, then $f$ is
Borel when $\mathcal F^*(Y)$ is endowed with the Wijsman topology.
If every open subset of $X$ is $F_\sigma$, in particular if $X$ is
metrizable, then $f$ is of Baire class~1.
\end{lemma}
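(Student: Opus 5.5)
The plan is to reduce Borel measurability (and Baire class~1) to statements about the real-valued maps $g_y:=d(y,f(\cdot))\colon X\to[0,\infty)$, where $y$ ranges over a countable dense subset $D\subseteq Y$.

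The first step is to identify a countable subbasis for the Wijsman topology when $Y$ is separable. For $y\in Y$ and $F,F'\in\mathcal F^*(Y)$ the triangle inequality gives
$$|d(y,F)-d(y',F)|\le d(y,y'),$$
so each map $y\mapsto d(y,F)$ is $1$-Lipschitz. Consequently the Wijsman topology is already the coarsest topology making $F\mapsto d(y,F)$ continuous for $y\in D$. Indeed, if $F_i\to F$ and $d(y,F_i)\to d(y,F)$ for all $y\in D$, then for an arbitrary $x$ we choose $y\in D$ with $d(x,y)<\varepsilon$ and obtain
$$\limsup_i |d(x,F_i)-d(x,F)|\le 2\varepsilon.$$
Hence the sets $\{F: d(y,F)<r\}$ and $\{F: d(y,F)>r\}$ with $y\in D$ and $r\in\mathbb Q$ form a countable subbasis $\mathcal S$. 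Every Wijsman-open set is then a countable union of finite intersections of members of $\mathcal S$. This is the step where care is needed: a general topology generated by a countable subbasis is second countable, so every open set is a countable union of basic open sets. Once this is in place, it suffices to show that $f^{-1}(S)$ is Borel, respectively $F_\sigma$, for each $S\in\mathcal S$.

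The second step treats the two types of subbasic set separately. Lower continuity of $f$ means that $f^{-1}(\mathcal I_U)$ is open for every open $U$. Take $U=B(y,r)$, the open ball. Then
$$\{x: g_y(x)<r\}=f^{-1}(\mathcal I_{B(y,r)})$$
is open, so $g_y$ is upper semicontinuous. For the other type,
$$\{x: g_y(x)>r\}=\bigcup_{n\ge1}\{x: g_y(x)\ge r+1/n\}.$$
Each set on the right is the complement of the open set $\{g_y<r+1/n\}$, hence closed. So $\{g_y>r\}$ is $F_\sigma$, and in particular Borel.

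The third step assembles these facts. Every subbasic preimage is Borel, so preimages of finite intersections and of countable unions are Borel, and $f$ is Borel. For Baire class~1, assume every open subset of $X$ is $F_\sigma$; in a metric space $U=\bigcup_n\{x: d(x,U^c)\ge 1/n\}$ exhibits this. Then the sets $\{g_y<r\}$ are $F_\sigma$ as well. A finite intersection of $F_\sigma$ sets is $F_\sigma$, because the intersection distributes over the countable unions, and a countable union of $F_\sigma$ sets is $F_\sigma$. Therefore $f^{-1}(W)$ is $F_\sigma$ for every Wijsman-open $W$, which is exactly Baire class~1.
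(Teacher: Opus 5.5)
Your proposal is correct and follows the paper's proof essentially step for step. Both reduce to the countable subbasis $\{d(y,\cdot)<r\}$, $\{d(y,\cdot)>r\}$ with $y\in D$ and $r\in\mathbb Q$, identify the first kind of preimage with the open set $f^{-1}(\mathcal I_{B(y,r)})$, exhibit the second as $F_\sigma$ via $\bigcup_n\{g_y\geq r+1/n\}$, and close under finite intersections and countable unions. The only difference is that you justify the passage from subbasic preimages to Borel measurability directly by second countability, where the paper cites Kechris.
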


\begin{proof}
Fix a countable dense set $D\subseteq Y$. Since
$|d(y,F)-d(z,F)|\leq d(y,z)$, the maps $F\mapsto d(y,F)$,
$y\in D$, generate the Wijsman topology.
Lower continuity of $f$ makes $u_y(x):=d(y,f(x))$ upper semicontinuous since
$$\{x\in X:u_y(x)<r\}=\{x\in X:f(x)\cap B(y,r)\neq\emptyset\}\quad\text{for all}\quad r>0.$$
Hence $\{x\in X:u_y(x)<r\}$ is open and
$$
\{x\in X:u_y(x)>r\}
=\bigcup_{n\geq1}\{x\in X:u_y(x)\geq r+1/n\}\text{ is }F_\sigma\text{ for all }r>0.
$$
These sets, for $y\in D$ and $r\in\mathbb Q$, are the inverse images under $f$ of a
countable subbasis for the Wijsman topology on $\mathcal F^*(Y)$. The Borel conclusion follows from
\cite[Section~11.C, p.~71]{Kechris1995}. If every open subset of $X$ is
$F_\sigma$, all these inverse images are $F_\sigma$; closure under finite
intersections and countable unions gives the second conclusion.
\end{proof}

\begin{remark}\label{rem:lower-wijsman-borel}
Let $Y$ be a separable metric space and $D\subseteq Y$ a countable dense set.
The Wijsman and lower topologies on $\mathcal F^*(Y)$ generate the same
Borel $\sigma$-algebra, namely the Effros $\sigma$-algebra, generated by the sets
$$
\mathcal I_{B(y,r)}
=\{F\in\mathcal F^*(Y):d(y,F)<r\},
\qquad y\in D,\quad r\in\mathbb Q,\quad r>0.
$$
This identification is attributed to Hess~\cite{Hess1983} in
\cite[Section~3]{beerPolishtopologyclosed1991}.
The reduction to a countable basis is recalled in
\cite[Section~12.C]{Kechris1995}.
Indeed, these sets form a countable subbasis for the lower topology, so
its open sets are countable unions of finite intersections of these sets.
Applying Lemma~\ref{lem:lower-to-borel} to the identity map on
$\mathcal F^*(Y)$ with the lower topology on the domain shows that every
Wijsman Borel set is lower Borel. The converse follows because the lower
topology is coarser than the Wijsman topology.
The two induced topologies on any subset of $\mathcal F^*(Y)$ likewise
generate the same Borel $\sigma$-algebra, generated by the traces of the
sets $\mathcal I_{B(y,r)}$; see also \cite[Section~2]{BarbatiBeerHess1994}.
\end{remark}

\subsection{\texorpdfstring{The space of $C^*$-subalgebras}{The space of C*-subalgebras}}
If $A$ is a unital $C^*$-algebra, set
$$
\Sub(A)=\{B\subseteq A:B\text{ is a unital }C^*\text{-subalgebra}{,\ 1_A\in B}\}
\subseteq\mathcal F^*(A),
$$
and endow $\Sub(A)$ with the induced Wijsman topology.

\medskip

\noindent The following fact was observed in
\cite[Remark~4.9]{fima2024michael}; we include a proof for completeness.

\begin{theorem}\label{thm:polish}
Let $A$ be a separable unital $C^*$-algebra. Then $\Sub(A)$ is a closed
subspace of $\mathcal F^*(A)$ and is Polish for the Wijsman topology.
\end{theorem}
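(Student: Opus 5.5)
We will use the classical fact, due to Beer, that for a separable metric space $(X,d)$ that is complete (in particular, for a separable Banach space), the Wijsman topology on $\mathcal F^*(X)$ is Polish. Granting this, it suffices to show that $\Sub(A)$ is closed in $\mathcal F^*(A)$, since a closed subspace of a Polish space is Polish. Here $A$ is a separable Banach space, so Beer's theorem applies directly.

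To prove closedness, we will use the following characterisation of Wijsman convergence in a metric space. If $F_i\to F$ in the Wijsman topology (along a net), then for every $x\in X$ we have $d(x,F_i)\to d(x,F)$. In particular, for $x\in F$ we get $d(x,F_i)\to0$, so each point of $F$ is a limit of points $x_i\in F_i$. Conversely, suppose $x\in X$ is such that there exist $x_i\in F_i$ with $x_i\to x$. Then $d(x,F_i)\le\|x-x_i\|\to0$, and hence $d(x,F)=0$, so $x\in F$ because $F$ is closed. Thus membership in the limit $F$ is detected by approximability from the $F_i$.

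Now let $B_i\in\Sub(A)$ be a net converging to $F\in\mathcal F^*(A)$. We check that $F$ is a unital $C^*$-subalgebra.
\begin{itemize}
\item Since $1\in B_i$ for all $i$, we have $d(1,B_i)=0$, hence $d(1,F)=0$ and $1\in F$.
\item Let $x,y\in F$ and $\lambda\in\C$. Choose $x_i,y_i\in B_i$ with $x_i\to x$ and $y_i\to y$; this is possible because $d(x,B_i)\to0$ and $d(y,B_i)\to0$. Then $x_i+\lambda y_i$, $x_iy_i$ and $x_i^*$ all lie in $B_i$.
\item These elements converge respectively to $x+\lambda y$, $xy$ and $x^*$. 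For the product, note that $(x_i)$ is eventually bounded, so $x_iy_i\to xy$ by continuity of multiplication on bounded sets.
\item By the converse direction above, $x+\lambda y$, $xy$ and $x^*$ belong to $F$.
\end{itemize}
Since $F$ is norm closed, it is therefore a unital $C^*$-subalgebra, so $\Sub(A)$ is closed.

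The only subtle point is working with nets rather than sequences, since we do not yet know the Wijsman topology is metrizable. However, the argument above is phrased entirely with nets, so it goes through. Alternatively, we can first invoke Beer's theorem to get metrizability and then argue with sequences. The main obstacle is thus merely citing the Polishness of $\mathcal F^*(X)$ for separable complete $X$ correctly, namely \cite{beerPolishtopologyclosed1991}; the rest is the routine closedness check above.
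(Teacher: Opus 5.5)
Your proposal is correct and follows the paper's proof: cite \cite[Thm.~4.3]{beerPolishtopologyclosed1991} for Polishness of $\mathcal F^*(A)$, then check that a Wijsman limit of unital $C^*$-subalgebras is closed under the algebraic operations by approximating from the $B_i$ and using $d(z,F)=\lim d(z,B_i)$. The only difference is that the paper works with sequences, which is legitimate since $\mathcal F^*(A)$ is already metrizable, while you work with nets; your net version is valid as well, for instance by choosing $x_i\in B_i$ with $\|x-x_i\|\leq 2d(x,B_i)$.
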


\begin{proof}
By \cite[Thm.~4.3]{beerPolishtopologyclosed1991}, $\mathcal F^*(A)$ is
Polish. It therefore suffices to prove that $\Sub(A)$ is sequentially
closed. Let $(B_n)$ be a sequence in $\Sub(A)$ converging in the Wijsman
topology to $F\in\mathcal F^*(A)$. Since $d(0,F)=\lim_n d(0,B_n)=0$ and $d(1,F)=\lim_n d(1,B_n)=0$,
one has $0,1\in F$. Fix $x,y\in F$ and $\lambda\in\C$. Choose
$x_n,y_n\in B_n$ such that $x_n\to x$ and $y_n\to y$. Then
$$
x_n+y_n\to x+y,\quad \lambda x_n\to\lambda x,
\quad x_n^*\to x^*,\quad x_ny_n\to xy.
$$
For each of the four limits $z$ above, there exists a sequence $z_n\in B_n$
converging to $z$. Hence $d(z,B_n)\to0$, and Wijsman convergence gives
$d(z,F)=0$. Since $F$ is closed, $z\in F$. Thus $F$ is a unital
$C^*$-subalgebra of $A$.
\end{proof}

\begin{proposition}\label{rem:subspace-closed}
Let $B\in\Sub(A)$ and let $j:\Sub(B)\to\Sub(A)$ be the inclusion.
\begin{enumerate}
\item $j(\Sub(B))$ is Wijsman-closed.
\item $j$ is continuous for the lower topologies. If $A$ is separable, it is Borel from the lower topology to the Wijsman topology, and of Baire class~$1$ for the Wijsman topologies.
\item $j$ is Wijsman-continuous at $B$.
\item The inverse $j^{-1}:j(\Sub(B))\to\Sub(B)$ is continuous from the induced Wijsman topology to the intrinsic Wijsman topology.
\end{enumerate}
\end{proposition}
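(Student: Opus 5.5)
Throughout, $d$ denotes the norm metric of $A$, so the Wijsman topology on $\Sub(B)$ is the one defined by the distance maps $C\mapsto d(x,C)$ for $x\in B$. The Wijsman topology on $\Sub(A)$ uses all $x\in A$. Note that for $C\subseteq B$ the quantity $d(x,C)$ does not depend on whether we regard $C$ inside $A$ or inside $B$.

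For (1), the plan is to mimic the proof of Theorem~\ref{thm:polish} using nets. Let $(C_i)$ be a net in $j(\Sub(B))$ converging in the Wijsman topology to $C\in\Sub(A)$. For $x\in C$ we have $d(x,C_i)\to d(x,C)=0$, so there are $c_i\in C_i\subseteq B$ with $c_i\to x$. Since $B$ is closed, $x\in B$. Hence $C\subseteq B$, i.e. $C\in j(\Sub(B))$. Closedness then follows because $\Sub(A)$ is itself closed (for separable $A$), or more simply because the limit is taken inside $\Sub(A)$ anyway.

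For (2), I will show that preimages of subbasic lower-open sets are lower-open. For $U\subseteq A$ open, $j^{-1}(\mathcal I_U)=\{C\in\Sub(B):C\cap(U\cap B)\neq\emptyset\}$, and this set is lower-open in $\Sub(B)$ because $U\cap B$ is open in $B$. Now assume $A$ is separable; then $B$ is separable too. Lemma~\ref{lem:lower-to-borel}, applied to $j$ with $X=\Sub(B)$ carrying the lower topology and $Y=A$, gives that $j$ is Borel into the Wijsman topology. For the Baire class~$1$ statement, give $\Sub(B)$ its Wijsman topology, which is Polish by Theorem~\ref{thm:polish} and hence metrizable. Since the lower topology is coarser than the Wijsman topology, $j$ is lower-continuous from this Wijsman domain. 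The second part of Lemma~\ref{lem:lower-to-borel} then yields Baire class~$1$.

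For (3), I need: if $C_i\to B$ in the Wijsman topology of $\Sub(B)$, then $d(x,C_i)\to d(x,B)$ for every $x\in A$. This is the main step, since $x\notin B$ is not covered by the intrinsic topology.
\begin{itemize}
\item Upper bound: fix $\varepsilon>0$ and choose $b\in B$ with $\|x-b\|<d(x,B)+\varepsilon$. Because $d(b,C_i)\to0$, eventually $d(x,C_i)\le \|x-b\|+d(b,C_i)<d(x,B)+2\varepsilon$.
\item Lower bound: $C_i\subseteq B$ gives $d(x,C_i)\ge d(x,B)$ for all $i$.
\end{itemize}
Together these give convergence at $B$.

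For (4), suppose $j(C_i)\to j(C)$ in the Wijsman topology of $\Sub(A)$. Then $d(x,C_i)\to d(x,C)$ for all $x\in A$, in particular for all $x\in B$. This is exactly Wijsman convergence in $\Sub(B)$, so $j^{-1}$ is continuous. The restriction is immediate since $B\subseteq A$ and the distances agree.

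I expect (3) to be the only genuinely delicate point. There one has to use the one-sided inequality $d(x,C_i)\ge d(x,B)$, which holds because every $C_i$ lies below the limit $B$. This is also why continuity is claimed only at $B$ and not everywhere.
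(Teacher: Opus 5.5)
Your proposal is correct and follows essentially the same route as the paper. Parts (2)--(4) match the paper's arguments: the identity $j^{-1}(\mathcal I_U)=\mathcal I_{U\cap B}$ combined with Lemma~\ref{lem:lower-to-borel} and Theorem~\ref{thm:polish}, the bound $d(x,B)\le d(x,C_i)\le\|x-b\|+d(b,C_i)$, and the equality of distances for $b\in B$. In (1), your net argument is the dual of the paper's observation that, for $c\in C\setminus B$, the set $\{C'\in\Sub(A):d(c,C')<d(c,B)\}$ is an open neighbourhood of $C$ missing $j(\Sub(B))$. Your aside about separability in (1) is unnecessary, since nets characterise closed sets in any topological space.
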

\begin{proof}
$(1)$. If $C\nsubseteq B$, choose $c\in C\setminus B$. The open set
$$\{C'\in\Sub(A):d(c,C')<d(c,B)\}$$ contains $C$ and has empty intersection with $j(\Sub(B))$.

\medskip

\noindent$(2)$. For every norm-open $U\subseteq A$, one has $
j^{-1}(\mathcal I_U\cap\Sub(A))=\mathcal I_{U\cap B}\cap\Sub(B)$.
This proves lower continuity. If $A$ is separable, Lemma~\ref{lem:lower-to-borel}, applied with either the lower source topology, shows that $j$ is Borel. Since the intrinsic Wijsman source topology is Polish, by Theorem~\ref{thm:polish}, it finishes the proof of $(2)$.

\medskip

\noindent$(3)$. For a net $C_i\to B$ in $\Sub(B)$, $x\in A$ and $b\in B$,
$$
d(x,B)\leq d(x,C_i)\leq\|x-b\|+d(b,C_i).
$$
Taking upper limits and then the infimum over $b\in B$ finishes the proof.

\medskip

\noindent$(4)$. For $b\in B$ and $C\in j(\Sub(B))$, one has $d_B(b,j^{-1}(C))=d_A(b,C)$.\end{proof}

\begin{example}The topology on $\Sub(B)$ induced from the Wijsman topology on $\Sub(A)$ can be
strictly finer than the intrinsic Wijsman topology of $\Sub(B)$, even
when $A$ and $B$ are commutative and separable.

\medskip

\noindent Indeed, let $\mathbb N_0=\{0,1,2,\ldots\}$ and $B=\{b\in\ell^\infty(\mathbb N_0):\lim_{k\to\infty}b_k
       \text{ exists in }\mathbb C\}\subset\ell^\infty(\mathbb N_0)$ be the $C^*$-subalgebra of convergent complex sequences. Consider the element $a=(2,1,0,1,0,\ldots)\in\ell^\infty(\mathbb N_0)$ and define $A:=C^*(B,a)\subseteq\ell^\infty(\mathbb N_0)$. $A$ is commutative and separable and $B\in\Sub(A)$. For $n\geq1$, let
$$C_n=\{b\in B:b_0=b_{2n-1}\}\quad\text{and}\quad C=\{b\in B:b_0=\lim_{k\to\infty}b_k\}.$$
These are unital $C^*$-subalgebras of $B$ and:
$$
 d(b,C_n)=\frac{|b_0-b_{2n-1}|}{2}
 \longrightarrow\frac{|b_0-L|}{2}=d(b,C).
$$
Indeed, for $b\in B$ with limit $L$, replacing $b_0,b_{2n-1}$ by their midpoint gives the first distance equality above. For the second, every $c\in C$ satisfies
$|b_0-L|\leq |b_0-c_0|+|c_0-L|\leq2\|b-c\|$.
Conversely, fix $\varepsilon>0$, put $m=(b_0+L)/2$, and replace $b_0$ and all sufficiently late coordinates by $m$. The resulting element of $C$ has distance at most $|b_0-L|/2+\varepsilon$ from $b$.

\medskip

\noindent For $a$ one has $d(a,C_n)\geq1/2$ and equality is attained by the sequence with coordinates $c_0=c_{2n-1}=3/2$ and $c_k=1/2$ otherwise. For $c\in C$, the values $a_0=2$ and $a_{2k}=0$ give $\|a-c\|\geq1$, and the constant sequence $1$ attains this bound. Hence $d(a,C_n)=1/2$ and $d(a,C)=1$. Hence $C_n\to C$ for the intrinsic Wijsman topology on $\Sub(B)$ but $C_n$ does not converge to $C$ in $\Sub(A)$.
\end{example}

\subsection{Local compactness}
\noindent For existence and uniqueness in the next lemma, see \cite[Lemma~2.2.7]{BekkaHarpeValette2008}. We include an easy proof for the reader's convenience.
\begin{lemma}\label{lem:minimal-enclosing-disk}
Let $K\subseteq\C$ be a non-empty compact set. There exists a unique pair
$(x,r)\in\C\times[0,\infty)$ such that $K\subseteq\overline{B(x,r)}$
and, for every $(x',r')\in\C\times[0,\infty)$ satisfying
$K\subseteq\overline{B(x',r')}$, one has $r\leq r'$.
We denote $x$ and $r$ by $\mathrm{Center}(K)$ and
$\mathrm{Radius}(K)$, respectively.
{For non-empty compact sets $K,L\subseteq\C$,
$$
|\mathrm{Radius}(K)-\mathrm{Radius}(L)|\leq d_{\mathcal{H}}(K,L),
$$
where $d_{\mathcal{H}}(K,L)=\max\{\sup_{z\in K}d(z,L),\sup_{w\in L}d(w,K)\}$ is the Hausdorff distance.}
\end{lemma}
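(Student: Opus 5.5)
The plan is to handle existence, uniqueness and the Lipschitz estimate one after another, all through the function
$$
f(x)=\max_{z\in K}|z-x| .
$$
This is the smallest radius $r$ for which $K\subseteq\overline{B(x,r)}$.

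\emph{Existence.} The function $f$ is a maximum of $1$-Lipschitz functions of $x$, so it is $1$-Lipschitz and hence continuous on $\C$. It is also coercive: if $|x|\geq 2\max_{z\in K}|z|$ then $f(x)\geq |x|/2$. Therefore $f$ attains its infimum $r:=\min f$ at some point $x$. This gives $K\subseteq\overline{B(x,r)}$. Moreover, whenever $K\subseteq\overline{B(x',r')}$ we have $r'\geq f(x')\geq r$, so the pair $(x,r)$ has the required minimality.

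\emph{Uniqueness of the centre.} The radius is forced to be $\min f$, so it remains to show the centre is unique. Suppose $x_1\neq x_2$ both satisfy $f(x_i)=r$, and let $m=(x_1+x_2)/2$. For each $z\in K$ the parallelogram law gives
$$
|z-m|^2=\tfrac12|z-x_1|^2+\tfrac12|z-x_2|^2-\tfrac14|x_1-x_2|^2\leq r^2-\tfrac14|x_1-x_2|^2 .
$$
Taking the maximum over the compact set $K$ yields $f(m)<r$, which contradicts minimality. This strict convexity step is the only point needing care; the degenerate case $r=0$ is covered by the same inequality, since then $K$ is a single point.

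\emph{Lipschitz estimate.} Put $\delta=d_{\mathcal H}(K,L)$ and $x=\mathrm{Center}(L)$. Take $z\in K$. By compactness of $L$ there is $w\in L$ with $|z-w|\leq\delta$. Hence
$$
|z-x|\leq |z-w|+|w-x|\leq \delta+\mathrm{Radius}(L),
$$
so $K\subseteq\overline{B(x,\mathrm{Radius}(L)+\delta)}$. Minimality of $\mathrm{Radius}(K)$ then gives $\mathrm{Radius}(K)\leq\mathrm{Radius}(L)+\delta$. Exchanging the roles of $K$ and $L$ gives the reverse inequality, which proves the estimate.
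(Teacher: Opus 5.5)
Your proposal is correct and follows the paper's proof essentially step for step. You use the same function $f(x)=\max_{z\in K}|z-x|$, the same continuity and coercivity argument for existence, and the same parallelogram-law midpoint argument for uniqueness of the centre; the only cosmetic difference is that you check the Hausdorff estimate at the point $\mathrm{Center}(L)$, whereas the paper proves $\max_{z\in K}|z-y|\leq\max_{w\in L}|w-y|+d_{\mathcal H}(K,L)$ for every $y$ and then takes the infimum.
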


\begin{proof}

Define $f(y)=\max_{z\in K}|z-y|$ for $y\in\C$.
The triangle inequality gives $|f(y)-f(y')|\leq|y-y'|$, so $f$ is continuous.
Fix $z_*\in K$. Since $f(y)\geq|y-z_*|\to\infty$ as $|y|\to\infty$,
$f$ attains its minimum, which we denote by $r$.

If $f(y_1)=f(y_2)=r$, then, for every $z\in K$,
$$
\left|z-\frac{y_1+y_2}{2}\right|^2
=\frac{|z-y_1|^2+|z-y_2|^2}{2}-\frac{|y_1-y_2|^2}{4}
\leq r^2-\frac{|y_1-y_2|^2}{4}.
$$
Taking the maximum over $z\in K$ gives $r^2\leq r^2-|y_1-y_2|^2/4$ by minimality of $r$, hence $y_1=y_2$.
Thus $f$ has a unique minimiser $x$.
Finally, $K\subseteq\overline{B(x',r')}$ is equivalent to $f(x')\leq r'$,
so $(x,r)$ is the unique pair with the stated property.
For every $y\in\C$,
$$
\max_{z\in K}|z-y|\leq\max_{w\in L}|w-y|+d_{\mathcal{H}}(K,L).
$$
Taking the infimum over $y$ and interchanging $K$ and $L$ proves the last assertion.
\end{proof}

\begin{proposition}\cite[Proposition~9.9]{Pelczynski1968}\label{prop:rad-dist-formula}
Let $X$ be a non-empty compact Hausdorff space and $R$ a closed equivalence relation on $X$. Identify $C(X/R)$ with its pullback under $p_R:X\to X/R$. Then, for every $f\in C(X)$,
$$d(f,C(X/R))=\sup_{y\in X/R}\mathrm{Radius}\bigl(f(p_R^{-1}(\{y\}))\bigr).$$
\end{proposition}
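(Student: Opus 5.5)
We prove the two inequalities separately. Write $R_y=\mathrm{Radius}\bigl(f(p_R^{-1}(\{y\}))\bigr)$ and $c_y=\mathrm{Center}\bigl(f(p_R^{-1}(\{y\}))\bigr)$ for $y\in X/R$; these are well defined by Lemma~\ref{lem:minimal-enclosing-disk}, since each fibre is a non-empty compact set, hence so is its image under $f$. Put $s=\sup_y R_y$.

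\emph{Lower bound.} Let $g\in C(X/R)$, viewed as $g\circ p_R$. On the fibre over $y$, the function $g$ is constant, equal to $g(y)$. Therefore $f(p_R^{-1}(\{y\}))\subseteq\overline{B(g(y),\|f-g\|)}$. By the minimality in Lemma~\ref{lem:minimal-enclosing-disk}, $R_y\le\|f-g\|$. Taking the supremum over $y$ and the infimum over $g$ gives $s\le d(f,C(X/R))$.

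\emph{Upper bound.} We must show that for every $\varepsilon>0$ there is $g\in C(X/R)$ with $\|f-g\|\le s+\varepsilon$. The natural candidate is $y\mapsto c_y$, but it need not be continuous. We therefore use a selection or partition-of-unity argument. The space $X/R$ is compact Hausdorff, because $R$ is closed and $X$ is compact.

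Consider the set-valued map $\Phi(y)=\bigcap_{x\in p_R^{-1}(y)}\overline{B(f(x),s+\varepsilon)}$. Each $\Phi(y)$ is a closed convex subset of $\C$, and it is non-empty because it contains $c_y$. A continuous selection of $\Phi$ is exactly a $g$ with $\|f-g\|\le s+\varepsilon$. Rather than invoking Michael's theorem directly, we argue with a partition of unity.

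For each $y_0$, the point $c_{y_0}$ satisfies $|f(x)-c_{y_0}|\le s$ for $x\in p_R^{-1}(y_0)$. The set $\{x:|f(x)-c_{y_0}|<s+\varepsilon\}$ is open and contains the fibre over $y_0$. Since $p_R$ is a closed map (closed equivalence relation on a compact space), this open set contains a saturated open neighbourhood $p_R^{-1}(V_{y_0})$, namely the complement of $p_R^{-1}(p_R(\text{complement}))$.

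Cover $X/R$ by finitely many sets $V_{y_1},\dots,V_{y_n}$ and choose a partition of unity $(\phi_i)$ on $X/R$ subordinate to this cover. Set $g=\sum_i\phi_i c_{y_i}$. For $x\in X$ with $y=p_R(x)$, we have $|f(x)-g(y)|\le\sum_i\phi_i(y)|f(x)-c_{y_i}|<s+\varepsilon$. This is a convexity argument: each term with $\phi_i(y)\neq0$ has $y\in V_{y_i}$. Hence $d(f,C(X/R))\le s+\varepsilon$.

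\emph{Main obstacle.} The key step is producing the saturated neighbourhoods $V_{y_0}$. This needs closedness of $R$: it makes $p_R$ closed and $X/R$ Hausdorff, hence normal, so that partitions of unity exist. The rest is routine.
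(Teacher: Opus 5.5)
Your proof is correct, but there is no proof in the paper to compare it with: the paper quotes this statement from Pe{\l}czy\'nski \cite[Proposition~9.9]{Pelczynski1968} and proves nothing, so your argument makes the statement self-contained.

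\textbf{Lower bound.} This is exactly the minimality clause of Lemma~\ref{lem:minimal-enclosing-disk}, applied to the disc $\overline{B(g(y),\|f-g\|)}$.

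\textbf{Upper bound.} This is a standard gluing argument, and it uses only three ingredients:
\begin{enumerate}
\item for each fibre, some disc of radius at most $s$ containing its image (uniqueness of the centre and the Hausdorff-distance estimate of the lemma play no role);
\item the saturated open sets $p_R^{-1}(V_{y_0})=X\setminus p_R^{-1}\bigl(p_R(X\setminus W_{y_0})\bigr)$;
\item a finite partition of unity on the compact Hausdorff space $X/R$, whose convex combination of centres keeps the pointwise error below $s+\varepsilon$.
\end{enumerate}

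\textbf{Closedness of $p_R$.} You state this without justification. For closed $F\subseteq X$ one has $p_R^{-1}(p_R(F))=\pi_2\bigl(R\cap(F\times X)\bigr)$, and projection along the compact factor $X$ is a closed map. Together with normality of $X$ and closedness of the fibres, this also shows that $X/R$ is Hausdorff, hence normal, as you need for the partition of unity.

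\textbf{Minor point.} The detour through the set-valued map $\Phi$ and Michael's theorem is not used and can be dropped.
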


\begin{theorem}\label{thm:local-compactness}
Let $A$ be a unital $C^*$-algebra. The following are equivalent:
\begin{enumerate}
\item $A$ is finite-dimensional;
\item $\Sub(A)$ is compact;
\item $\Sub(A)$ is locally compact.
\end{enumerate}
\end{theorem}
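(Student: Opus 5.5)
We will prove $(1)\Rightarrow(2)\Rightarrow(3)\Rightarrow(1)$. The implication $(2)\Rightarrow(3)$ is trivial.

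\emph{$(1)\Rightarrow(2)$.} If $A$ is finite-dimensional, every non-empty closed subset of $A$ is closed in a proper metric space. On a proper metric space the Wijsman topology on $\mathcal F^*(A)$ coincides with the Fell topology. Alternatively and more elementarily, it coincides with the topology of uniform convergence on compacta of the $1$-Lipschitz functions $d(\cdot,F)$. We use the Arzelà--Ascoli description. The family of $1$-Lipschitz functions $x\mapsto d(x,B)$, $B\in\Sub(A)$, is pointwise bounded, since $d(x,B)\le \|x\|$ because $0\in B$. Hence every net of such functions has a subnet converging pointwise, and therefore locally uniformly, to a $1$-Lipschitz function $g$. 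Set $F=g^{-1}(0)$. Then $0\in F$, and one checks in proper spaces that $g=d(\cdot,F)$, using nearest points in the compact balls. Thus $B_i\to F$ in the Wijsman topology. Theorem~\ref{thm:polish}'s sequential-closedness argument works verbatim for nets, so $F\in\Sub(A)$. Hence $\Sub(A)$ is compact. If needed, we can also use that $A$ is separable here, so sequences suffice.

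\emph{$(3)\Rightarrow(1)$, the main step.} Suppose $A$ is infinite-dimensional. We show that no point has a compact neighbourhood. By homogeneity considerations this is not automatic, so we must work at an arbitrary $B\in\Sub(A)$. A basic Wijsman neighbourhood of $B$ has the form
$$\mathcal U=\{C:|d(x_k,C)-d(x_k,B)|<\varepsilon,\ k=1,\dots,m\}.$$
Our aim is a sequence $C_n\in\Sub(A)$ lying in $\mathcal U$, and even in a prescribed closed neighbourhood, with no convergent subnet in $\Sub(A)$.

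\emph{Construction.} We enlarge $B$ by a small perturbation in a new direction. Since $A$ is infinite-dimensional, it contains a commutative $C^*$-subalgebra with infinite spectrum, so we may choose a self-adjoint $a$ with infinite spectrum. Working inside $C^*(1,a)\cong C(X)$ with $X$ infinite compact, we pick pairwise distinct points and consider subalgebras of the type $C(X/R_n)$, where $R_n$ identifies a shrinking pair of points. Proposition~\ref{prop:rad-dist-formula} and Lemma~\ref{lem:minimal-enclosing-disk} compute $d(f,C(X/R_n))$ exactly. This should produce a family whose distance functions oscillate, for example $d(f,\cdot)$ alternating as in the example with $C_n$ above, so that no subnet converges. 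The subalgebras must also be combined with $B$, for instance $C^*(B,\cdot)$ or $B\cap\cdot$, while keeping distances to the $x_k$ nearly unchanged.

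The expected obstacle is exactly this: controlling the distances from the finitely many $x_k$ while producing a sequence without Wijsman cluster points. We first try the sets
$$C_n=\{c\in\mathcal D:\ c \text{ agrees at } p_0,p_n\},$$
applied where $\mathcal D$ is a suitable subalgebra close to $B$. Proposition~\ref{prop:rad-dist-formula} shows that these distances are small perturbations, yet a test element like $a$ in the example gives non-convergent values. Compactness of $\Sub(A)$ in the finite-dimensional direction is false here, and we would derive non-local-compactness near $B=\C1$ first. For a general $B$, we would transport the construction via a test element orthogonal, in the distance sense, to the $x_k$.
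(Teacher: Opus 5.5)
Your proof of $(1)\Rightarrow(2)$ is fine and is essentially the paper's argument, with nets and Tychonoff in place of a diagonal subsequence plus metrizability. The gap is in $(3)\Rightarrow(1)$: what you give there is a plan with no construction, and the plan aims at the wrong target.

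\textbf{The wrong target.} To negate local compactness it suffices to exhibit \emph{one} point with no compact neighbourhood. Nothing forces you to work at an arbitrary $B$ or at $\C1$. Those choices are exactly what create the obstacle you leave unresolved: controlling $d(x_k,\cdot)$ for test elements $x_k\in A$, about which Proposition~\ref{prop:rad-dist-formula} says nothing.

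\textbf{The missing reduction.} Work at the point $B=C^*(1,a)\cong C(X)$ itself. By Proposition~\ref{rem:subspace-closed}, $j(\Sub(B))$ is closed, $j$ is continuous at $B$, and $j^{-1}$ is continuous. Hence a compact neighbourhood $K$ of $B$ in $\Sub(A)$ yields a compact neighbourhood $j^{-1}(K\cap j(\Sub(B)))$ of $C(X)$ in $\Sub(C(X))$ with its intrinsic Wijsman topology. That space is metrizable, so sequences suffice even if $A$ is not separable. A basic neighbourhood of the top element $C(X)$ is simply $\{C:d(f_i,C)<\varepsilon,\ 1\leq i\leq m\}$ with $f_i\in C(X)$, so there are no external test elements to control. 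Proposition~\ref{rem:subspace-closed} gives continuity of $j$ only at the top point $B$, and it can fail elsewhere (cf.\ the example following that proposition). This is why starting at $\C1$ does not allow the transfer.

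\textbf{The missing mechanism.} The construction you sketch would not produce non-compactness.
\begin{itemize}
\item Identifying a single shrinking pair $\{p_0,p_n\}$ with $p_n\to p_0$ gives $d(f,C_n)=|f(p_0)-f(p_n)|/2\to0$ for every $f$. So $C_n\to C(X)$, a convergent sequence.
\item Distance values that merely oscillate show only that the sequence itself diverges. What contradicts compactness is a sequence with no convergent subsequence.
\end{itemize}
What is needed is two identified pairs whose classes merge in the limit. Choose a non-isolated point $q$ and a neighbourhood $U$ of $q$ on which every $f_i$ oscillates by less than $\varepsilon$. Take distinct $p,r\in U\setminus\{q\}$, a sequence $y_n\to q$ in $U$, and set $C_n=\{f:f(p)=f(q),\ f(r)=f(y_n)\}$. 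Proposition~\ref{prop:rad-dist-formula} puts every $C_n$ in the neighbourhood and gives
\[
d(f,C_n)\to D_f=\tfrac12\max\{|f(p)-f(q)|,|f(r)-f(q)|\}.
\]
Any subsequential limit would have to be $C=\{f:f(p)=f(q)=f(r)\}$. But for $h$ with $h(p)=-1$, $h(q)=0$, $h(r)=1$ one has $d(h,C)=\mathrm{Radius}(\{-1,0,1\})=1\neq\tfrac12=D_h$. Thus the pointwise limit of the distance functions is not the distance function of any subalgebra. This is the key idea your proposal lacks.
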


\begin{proof}
Suppose first that $A$ is finite-dimensional. Let $(B_n)$ be a sequence
in $\Sub(A)$ and put $f_n(x)=d(x,B_n)$. Since the functions $f_n$ are
$1$-Lipschitz and $0\leq f_n(x)\leq\|x\|$, diagonal extraction on a
countable dense subset of $A$, followed by the Lipschitz estimate, yields
a subsequence converging pointwise on $A$ to a $1$-Lipschitz function $f$.
Relabel this subsequence and put $B=f^{-1}(0)$. Then $B$ is closed,
$0,1\in B$, and $f(x)\leq d(x,B)$ for every $x\in A$. Let us show the converse inequality. Fix $x\in A$ and choose $b_n\in B_n$ with $\|x-b_n\|\leq f_n(x)+1/n$.
The sequence $(b_n)$ is bounded. Since $A$ is finite dimensional, a subsequence $(b_{n_k})$ converges
to some $b\in A$. Note that
$$
0\leq f_{n_k}(b)\leq\|b-b_{n_k}\|\longrightarrow0.
$$
Hence $f(b)=0$ so $b\in B$ and,
$$
d(x,B)\leq\|x-b\|=\lim_k\|x-b_{n_k}\|
=\lim_k f_{n_k}(x)=f(x).
$$
Thus $f=d(\,\cdot\,,B)$ and $B_n\to B$ in the Wijsman topology.
{By Theorem~\ref{thm:polish}, $B\in\Sub(A)$.
Thus $\Sub(A)$ is sequentially compact; since it is metrizable by the same theorem, it is compact.
This proves $(1)\Rightarrow(2)$.} The implication
$(2)\Rightarrow(3)$ is immediate.

\medskip

\noindent For $(3)\Rightarrow(1)$, suppose that $A$ is infinite-dimensional.
By {\cite[Remark, p.~4]{tullo1976conditions}}, there is a self-adjoint $a\in A$
with infinite spectrum $X$. Put $B=C^*(1,a)\simeq C(X)$; the space $X$
is compact metrizable. We show {below} that $\Sub(A)$ has no compact neighbourhood
of the point $B$.

\medskip

\noindent By Proposition~\ref{rem:subspace-closed}, the inclusion $j:\Sub(B)\to\Sub(A)$ is continuous at $B$ and the image of
$j$ is closed and its inverse is continuous. Consequently, if $K$ were
a compact neighbourhood of $B$ in $\Sub(A)$, then $K_B=j^{-1}(K\cap j(\Sub(B)))$
would be {a compact neighbourhood} of $B$ in the intrinsic Wijsman topology. Identifying $B$ with $C(X)$, choose a basic neighbourhood
$$V=\{C\in\Sub(C(X)):d(f_i,C)<\varepsilon,\ 1\leq i\leq m\}
\subseteq K_B,$$
where $f_1,\ldots,f_m\in C(X)$ and $\varepsilon>0$. Since $X$ is compact
and infinite, it has a non-isolated point $q$. For $f\in C(X)$ and
nonempty $U\subseteq X$, put
$$
\operatorname{osc}_U(f)=\sup_{s,t\in U}|f(s)-f(t)|.
$$
{By continuity, choose open neighbourhoods $U_i$ of $q$ with
$|f_i(t)-f_i(q)|<\varepsilon/3$ for $t\in U_i$.
For $U=\bigcap_{i=1}^mU_i$ and $1\leq i\leq m$,
$\operatorname{osc}_U(f_i)
\leq 2\sup_{t\in U}|f_i(t)-f_i(q)|\leq 2\varepsilon/3<\varepsilon$.}
Since every neighbourhood of $q$ is infinite, we can choose distinct $p,r\in U\setminus\{q\}$ and a sequence of
distinct points $y_n\in U\setminus\{p,q,r\}$ converging to $q$. Set
$$
C_n=\{f\in C(X):f(p)=f(q),\ f(r)=f(y_n)\}{\in\Sub(C(X))}.
$$
Note that $C_n=C(X/R_n)$, where $R_n$ is the closed equivalence relation on $X$ with the only non-singleton equivalence classes being $\{p,q\}$ and $\{r,y_n\}$, so Proposition~\ref{prop:rad-dist-formula}, together with $\mathrm{Radius}(\{u,v\})=|u-v|/2$, gives
$$
d(f,C_n)=\max\left\{\frac{|f(p)-f(q)|}{2},
                         \frac{|f(r)-f(y_n)|}{2}\right\}.
$$
In particular $C_n\in V\subseteq K_B$ for every $n$, and
$$
d(f,C_n)\longrightarrow D_f:=
\max\left\{\frac{|f(p)-f(q)|}{2},\frac{|f(r)-f(q)|}{2}\right\}
\quad\text{for all }f\in C(X).
$$
Since $K_B$ is compact and $\Sub(C(X))$ is metrizable, a subsequence
would converge to some $C\in K_B$. Necessarily
$$
C=\{f:D_f=0\}=\{f\in C(X):f(p)=f(q)=f(r)\}.
$$
By the Tietze extension theorem, choose $h\in C(X,[-1,1])$ with
$h(p)=-1$, $h(q)=0$ and $h(r)=1$. Then $D_h=1/2$, whereas
Proposition~\ref{prop:rad-dist-formula} gives
$$
d(h,C)=\mathrm{Radius}(\{-1,0,1\})=1\ne D_h,
$$
contradicting Wijsman convergence.
Hence no such compact neighbourhood $K$
exists, and $\Sub(A)$ is not locally compact.
\end{proof}

\subsection{Homomorphisms and approximation by conjugates}

\begin{lemma}\label{lem:direct-image}
Let $\pi:A\to D$ be a unital $*$-homomorphism between unital $C^*$-algebras. The map
$\pi_{\mathrm{im}}:\Sub(A)\to\Sub(D)$, $B\mapsto\pi(B)$
is continuous for the lower topologies. If $D$ is separable, it is Borel from the lower topology to the Wijsman topology. If $A$ and $D$ are separable, it is of Baire class~$1$ for the Wijsman topologies.
\end{lemma}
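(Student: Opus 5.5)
First, note that $\pi_{\mathrm{im}}$ is well defined. The image of a $C^*$-algebra under a $*$-homomorphism is closed. Hence $\pi(B)$ is a $C^*$-subalgebra of $D$, and it contains $\pi(1_A)=1_D$ because $\pi$ is unital, so $\pi(B)\in\Sub(D)$.

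The core of the plan is lower continuity, which we check on the subbasic sets $\mathcal I_U$. Fix a norm-open $U\subseteq D$. For $B\in\Sub(A)$ we have
$$
\pi(B)\cap U\neq\emptyset\iff B\cap\pi^{-1}(U)\neq\emptyset .
$$
Since $\pi$ is norm-continuous, $\pi^{-1}(U)$ is open in $A$. Therefore
$$
\pi_{\mathrm{im}}^{-1}(\mathcal I_U\cap\Sub(D))=\mathcal I_{\pi^{-1}(U)}\cap\Sub(A),
$$
which is lower-open. This is the same computation as in Proposition~\ref{rem:subspace-closed}(2), and it gives continuity for the lower topologies.

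For the Borel statement, assume $D$ is separable. Lemma~\ref{lem:lower-to-borel} applies with $X=\Sub(A)$ carrying the lower topology and $Y=D$, and it shows that $\pi_{\mathrm{im}}$ is Borel into $\Sub(D)$ with the Wijsman topology. One small point needs checking: the lemma is stated for maps into $\mathcal F^*(Y)$. Our map takes values in $\Sub(D)\subseteq\mathcal F^*(D)$, and $\Sub(D)$ carries the subspace topology, so Borelness transfers directly.

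For the Baire class~$1$ statement, assume both $A$ and $D$ are separable. Since the lower topology is coarser than the Wijsman topology, $\pi_{\mathrm{im}}$ is also lower-continuous when $\Sub(A)$ carries its Wijsman topology. By Theorem~\ref{thm:polish}, that topology is Polish, hence metrizable, so every open set in it is $F_\sigma$. The second part of Lemma~\ref{lem:lower-to-borel} then yields Baire class~$1$.

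The only potential obstacle is keeping track of which topology sits on the source in each step. The Borel statement uses the lower topology on the source, while the Baire class~$1$ statement uses the Wijsman topology there. No approximation arguments are needed.
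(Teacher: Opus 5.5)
Your proposal is correct and follows the paper's proof essentially verbatim. The identity $\pi_{\mathrm{im}}^{-1}(\mathcal I_U\cap\Sub(D))=\mathcal I_{\pi^{-1}(U)}\cap\Sub(A)$ gives lower continuity, and the remaining assertions follow from Lemma~\ref{lem:lower-to-borel}, with Theorem~\ref{thm:polish} providing metrizability of the Wijsman topology on $\Sub(A)$; your extra checks (closedness of $\pi(B)$, passing to the subspace $\Sub(D)\subseteq\mathcal F^*(D)$, and coarseness of the lower topology on the source) are all sound.
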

\begin{proof}
For every norm-open $U\subseteq D$,
$\pi_{\mathrm{im}}^{-1}(\mathcal I_U\cap\Sub(D))
=\mathcal I_{\pi^{-1}(U)}\cap\Sub(A)$
so $\pi_{\mathrm{im}}$ is lower-continuous. The remaining assertions follow from Lemma~\ref{lem:lower-to-borel}, using Theorem~\ref{thm:polish} for metrizability when $A$ is separable.
\end{proof}

\begin{lemma}\label{lem:inverse-image}
Let $\pi:A\to B$ be a surjective unital $*$-homomorphism between unital
$C^*$-algebras. Then $\pi^{-1}:\Sub(B)\to\Sub(A)$, $D\longmapsto\pi^{-1}(D)$, is a homeomorphism onto the closed subset
$\{C\in\Sub(A):\ker\pi\subseteq C\}$ of $\Sub(A)$.
\end{lemma}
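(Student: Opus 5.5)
Write $J=\ker\pi$ and $\Sub_J(A)=\{C\in\Sub(A):J\subseteq C\}$. The plan rests on one distance identity. For every $D\in\Sub(B)$ and $x\in A$, I want
$$
d\bigl(x,\pi^{-1}(D)\bigr)=d\bigl(\pi(x),D\bigr).
$$
Everything else follows from this identity.

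\emph{Algebraic part.} For $D\in\Sub(B)$, the set $\pi^{-1}(D)$ is a closed unital $*$-subalgebra of $A$ containing $J$. Surjectivity gives $\pi(\pi^{-1}(D))=D$, so the map is injective. Conversely, if $C\in\Sub(A)$ contains $J$, then $\pi(C)$ is a $C^*$-subalgebra, being the image of a $C^*$-algebra under a $*$-homomorphism, and it is unital. Since $J\subseteq C$, we get $\pi^{-1}(\pi(C))=C+J=C$. So $\pi^{-1}$ is a bijection onto $\Sub_J(A)$, with inverse $C\mapsto\pi(C)$.

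\emph{Distance identity.} The inequality $\geq$ holds because $\pi$ is contractive. For $\leq$, note that $\pi(x)-d$ is the image of $x-c$ for any $c\in\pi^{-1}(D)$ with $\pi(c)=d$. The quotient norm gives $\|\pi(x)-d\|=\inf_{j\in J}\|x-c-j\|$, since $B\cong A/J$ isometrically. Because $c+j\in\pi^{-1}(D)$, we get $d(x,\pi^{-1}(D))\leq\|\pi(x)-d\|$. Taking the infimum over $d\in D$ gives the identity. The main subtlety is this use of the quotient-norm isometry, which is why surjectivity is needed.

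\emph{Topology.} Continuity of $\pi^{-1}$ is now immediate. If $D_i\to D$ in the Wijsman topology, then $d(x,\pi^{-1}(D_i))=d(\pi(x),D_i)\to d(\pi(x),D)=d(x,\pi^{-1}(D))$ for each $x\in A$.

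For continuity of the inverse $C\mapsto\pi(C)$ on $\Sub_J(A)$, fix $y\in B$ and pick any $x$ with $\pi(x)=y$. Then $d(y,\pi(C))=d(x,C)$ by the identity applied to $D=\pi(C)$, and the right side is Wijsman-continuous in $C$. So the map is a homeomorphism onto its image.

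\emph{Closedness.} Suppose $C\in\Sub(A)$ does not contain $J$, and pick $j\in J\setminus C$, so that $d(j,C)>0$. The set $\{C'\in\Sub(A):d(j,C')>0\}$ is Wijsman-open, contains $C$, and misses $\Sub_J(A)$. Hence $\Sub_J(A)$ is closed.
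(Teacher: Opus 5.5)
Your proof is correct and follows essentially the same route as the paper: you prove the distance identity $d_A(x,\pi^{-1}(D))=d_B(\pi(x),D)$ using contractivity and the quotient norm. You identify the image through $\pi(\pi^{-1}(D))=D$ and $\pi^{-1}(\pi(C))=C$, and you read off continuity in both directions from the identity. The only cosmetic difference is in closedness: you exhibit the open neighbourhood $\{C':d(j,C')>0\}$ of each $C$ with $\ker\pi\nsubseteq C$, whereas the paper writes the image as $\bigcap_{x\in\ker\pi}\{C:d_A(x,C)=0\}$, which is the same argument.
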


\begin{proof}
We first prove that, for $a\in A$ and $D\in\Sub(B)$, one has:
$$d_A(a,\pi^{-1}(D))=d_B(\pi(a),D).$$
Let $b\in\pi^{-1}(D)$ and write $d=\pi(b)\in D$. Contractivity of $\pi$ gives
$$d_B(\pi(a),D)\leq\|\pi(a)-d\|\leq\|a-b\|.$$
Taking the infimum over $b$ proves $d_B(\pi(a),D)\leq d_A(a,\pi^{-1}(D))$. For the converse inequality, fix $d\in D$ and $\varepsilon>0$. By surjectivity of $\pi$ and the quotient norm formula, there exists $z\in A$ such that $\pi(z)=\pi(a)-d$ and  $\|z\|\leq\|\pi(a)-d\|+\varepsilon$. Then $a-z\in\pi^{-1}(D)$ so $d_A(a,\pi^{-1}(D))\leq\|z\|
\leq\|\pi(a)-d\|+\varepsilon$. Taking the infimum over $d\in D$ and then letting $\varepsilon$ tend to
zero proves the converse inequality.

\medskip

\noindent Surjectivity gives $\pi(\pi^{-1}(D))=D$, so $D\mapsto\pi^{-1}(D)$ is injective.
Every $\pi^{-1}(D)$ contains $\ker\pi$. Conversely, if $C\in\Sub(A)$ contains
$\ker\pi$, then $\pi(C)\in\Sub(B)$, since images of $C^*$-algebras under
$*$-homomorphisms are closed, and $\pi^{-1}(\pi(C))=C$:
if $\pi(a)=\pi(c)$ with $c\in C$, then $a-c\in\ker\pi\subseteq C$, hence $a\in C$.
Thus the image is $\{C:\ker\pi\subseteq C\}$, which is closed since it equals
$\bigcap_{x\in\ker\pi}\{C:d_A(x,C)=0\}$.
The distance identity and surjectivity of $\pi$ give continuity in both
directions.
\end{proof}

\begin{proposition}\label{rem:universal-conjugate-approximation}
For every unital inclusion $B\subseteq D$ of unital separable $C^*$-algebras,
there are a separable unital $C^*$-algebra $A\supseteq D$ and
self-adjoint unitaries $u_n\in A\cap B'$ such that $u_nDu_n\to B$
in the Wijsman topology on $\Sub(A)$.
\end{proposition}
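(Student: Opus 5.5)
The plan is to realise all the conjugates $u_nDu_n$ as "independent copies of $D$ glued along $B$" inside one separable algebra, so that each copy is asymptotically free over $B$ from any fixed element.

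\emph{Construction.} Let $E=\ast_B D_k$ ($k\in\N_0$) be the full amalgamated free product over $B$ of countably many copies $D_k$ of $D$. It is separable and unital, and it contains $D\cong D_0$ and $B$, identified in all copies. Every permutation $\sigma$ of $\N_0$ induces, by the universal property, an automorphism $\alpha_\sigma$ of $E$. This automorphism permutes the copies and fixes $B$ pointwise. Let $\tau_n$ be the transposition $(0\ n)$. Put
$$A=E\rtimes_\alpha \textstyle\bigoplus_{\N}\Z/2,$$
or, more economically, the crossed product by the countable group generated by the $\tau_n$. Let $u_n\in A$ be the self-adjoint unitary implementing $\alpha_{\tau_n}$. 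Then $u_n$ commutes with $B$, since $\alpha_{\tau_n}|_B=\id$. Moreover $u_nD_0u_n=D_n$. The algebra $A$ is separable, and $D\subseteq A$ via $D_0$.

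\emph{Convergence.} We must show $d(x,D_n)\to d(x,B)$ for every $x\in A$. By the $1$-Lipschitz estimate it suffices to do this for $x$ in a dense set, namely finite sums $\sum_g x_g u_g$ with $x_g$ in the $\ast$-algebra generated by $D_0,\dots,D_N$ and $g$ ranging over permutations supported in $\{0,\dots,N\}$. The inequality $d(x,D_n)\le d(x,B)$ is immediate from $B\subseteq D_n$. So only $\liminf_n d(x,D_n)\ge d(x,B)$ remains.

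For $n>N$, I would try to produce a contractive map $\Phi_n\colon A\to A$ (or into a larger algebra) that fixes $x$ (or moves it by $o(1)$) and maps $D_n$ into $B$. This gives
$$\|x-b\|\le \|x-d\|+o(1)$$
for $b=\Phi_n(d)\in B$. The natural candidate is a $B$-bimodular u.c.p.\ map that acts as the identity on $C^\ast(D_0,\dots,D_N)$ and as a ``conditional expectation'' $D_n\to B$. Such a map would be the free product of u.c.p.\ maps (Boca's theorem for full amalgamated free products of u.c.p.\ $B$-bimodular maps). It then has to be extended to the crossed product by averaging/equivariance over the permutations fixing $\{0,\dots,N\}$.

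\emph{Main obstacle.} A $B$-bimodular u.c.p.\ retraction $D\to B$ need not exist, since $B\subseteq D$ is arbitrary. If it does not, my fallback is to modify the construction so that such a retraction exists by design. I would replace $D$ by $D\ast_B (B\otimes C(\{0,1\}))$, or enlarge each copy so that a retraction onto $B$ becomes available. Alternatively, I would obtain the lower bound through an asymptotic-freeness argument. In that argument, the distance from $x$ to $D_n$ is computed in the GNS representation of a state on $E$ that is free over $B$ with respect to the copy $D_n$. This step, getting $\liminf_n d(x,D_n)\ge d(x,B)$, is where I expect the real work. Closedness of the limit in $\Sub(A)$ is automatic from Theorem~\ref{thm:polish}.
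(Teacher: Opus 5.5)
There is a genuine gap. Your construction has the formal properties asked for: $A$ is separable, $D\subseteq A$ via the folding map, and the $u_n$ are self-adjoint unitaries in $B'$ with $u_nD_0u_n=D_n$. (The $\tau_n$ do not commute, so they generate the finitary symmetric group $S_\infty$, not $\bigoplus\Z/2$.) But the whole content of the statement is the inequality $\liminf_n d(x,D_n)\ge d(x,B)$, which you leave open.

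\emph{The proposed mechanism cannot work in general.} Suppose a contractive map is the identity on $C^*(D_0,\dots,D_N)\supseteq B$ and sends $D_n$ into $B$. Its restriction is a norm-one projection of $D_n\cong D$ onto $B$, hence, by Tomiyama, a conditional expectation $D\to B$. None exists when $D$ is nuclear and $B$ is not. That is exactly the situation of the paper's applications, e.g.\ $C_r^*(\F_2)\subseteq C(\partial\F_2)\rtimes_r\F_2$, or a non-nuclear exact $B\subseteq\mathcal O_2$. Enlarging $D$ does not help, since a retraction of a larger algebra onto $B$ restricts to one on $D$.

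\emph{The permutation model makes the step harder.} Take $y$ in your level-$N$ dense subalgebra and $N<n<n'$. The unitary implementing $(n\ n')$ commutes with $y$ and conjugates $D_n$ onto $D_{n'}$. So $d(y,D_n)$ is constant in $n>N$: there is no asymptotic freeness to exploit, and you need exact equality. Moreover, $D_n$ commutes with every $u_g$, $g\in S_{\{0,\dots,N\}}$. The $S_\infty$-invariant folding map $E\to D$ shows $C^*(D_n,u_g)\cong D\otimes C^*(S_{N+1})$. So already for $y\in B\otimes C^*(S_{N+1})$ you would need $d(y,D\otimes 1)=d(y,B\otimes 1)$ inside $D\otimes C^*(S_{N+1})$. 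This is a matrix-amplified comparison of $B$ and $D$ that nothing in your sketch addresses. Also, a covariant representation cannot send $u_{(0\,n)}$ to $1$ without being trivial on all of $S_\infty$, since all transpositions are conjugate.

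\emph{The missing idea} is Rieffel's distance formula, used in the paper with an ambient algebra that imposes no relations beyond commutation with $B$.
\begin{itemize}
\item Take $A=D*_B\bigl(*_{n,B}(B\oplus B)\bigr)$, the universal algebra generated by $D$ and self-adjoint unitaries $u_n$ commuting with $B$. It contains $D$ via the retraction $u_n\mapsto 1$.
\item For $y\in A_N=C^*(D,u_1,\dots,u_N)$ there are a representation $\pi$ of $A_N$ and a self-adjoint unitary $U\in\pi(B)'$ with $d(y,B)=\tfrac12\|\pi(y)-U\pi(y)U\|$.
\item For $n>N$, universality gives representations $\rho_0,\rho_1$ of $A$ with $\rho_0|_{A_N}=\pi$, $\rho_1|_{A_N}=U\pi(\cdot)U$, $\rho_0(u_n)=1$ and $\rho_1(u_n)=U$. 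All other $u_j$ with $j>N$ go to $1$. These assignments are legitimate because $U$ commutes with $\pi(B)$.
\item Then $\rho_0(u_nxu_n)=\pi(x)=\rho_1(u_nxu_n)$ for $x\in D$. Hence for every $z\in u_nDu_n$,
$d(y,B)=\tfrac12\|\rho_0(y-z)-\rho_1(y-z)\|\le\|y-z\|$.
\item So $d(y,u_nDu_n)=d(y,B)$ exactly for $n>N$, with no conditional expectation needed. Your density and $1$-Lipschitz step then finishes the proof.
\end{itemize}
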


\begin{proof}
Let $A=D*_B\bigl(*_{n\geq1,B}(B\oplus B)\bigr)$ be the full amalgamated free product, with diagonal embeddings of $B$ into each $B\oplus B$. By letting $u_n=(1,-1)$ in the $n$th copy of $B\oplus B$, we may view $A$ as the universal unital $C^*$-algebra generated by $D$ and
self-adjoint unitaries $u_n$, $n\geq 1$ subject to the relations $u_nb=bu_n$ for all $b\in B$ and $n\geq 1$. Sending every $u_n$ to $1$ gives a retraction onto $D$, so $D$ embeds
faithfully in $A$. Moreover, $A$ is separable. Put
$B_n=u_nDu_n$ and $A_N=C^*(D,u_1,\ldots,u_N)$. We claim that $ d(y,B_n)=d(y,B)$ for all $y\in A_N$ and  $n>N$. Indeed, fix $y\in A_N$ and $n>N$. By \cite[Theorem~3.2]{Rieffel2011}, there are
a unital representation $\pi:A_N\to\mathcal B(H)$ and a self-adjoint
unitary $U\in\pi(B)'$ such that $d(y,B)=\frac12\|\pi(y)-U\pi(y)U\|$. By the universal property of $A$, there are unital $*$-homomorphisms
$\rho_0,\rho_1:A\to\mathcal B(H)$ satisfying $\rho_0|_{A_N}=\pi$, $\rho_1|_{A_N}=U\pi(\cdot)U$ and:
$$
\rho_0(u_j)=1\text{ for all }j>N,\quad\rho_1(u_j)=\left\{\begin{array}{ll}U&\text{ if }j=n,\\1&\text{ if }j>N\text{ and }j\neq n.\end{array}\right.
$$
These assignments satisfy the relations since $U\in\pi(B)'$. For every $x\in D$,
$$
\rho_0(u_nxu_n)=\pi(x)
=U(U\pi(x)U)U=\rho_1(u_nxu_n),
$$
so $\rho_0$ and $\rho_1$ agree on $B_n$. Hence, for every $z\in B_n$,
$$
d(y,B)=\frac12\|\rho_0(y-z)-\rho_1(y-z)\|\leq\|y-z\|.
$$
Taking the infimum over $z\in B_n$ gives $d(y,B)\leq d(y,B_n)$. The reverse inequality follows from $B\subseteq B_n$. This proves our claim.

\medskip

\noindent Since $\bigcup_N A_N$ is dense in $A$ and distance functions are
$1$-Lipschitz, it follows that $B_n\to B$ in the Wijsman topology.\end{proof}

\begin{remark}\label{rem:direct-image-discontinuous}
The map in Lemma~\ref{lem:direct-image} need not be continuous for the Wijsman topologies, even for a surjective unital $*$-homomorphism admitting a unital $*$-homomorphic section. Indeed, for a proper unital inclusion $B\subsetneq D$ of separable $C^*$-algebras, take $A$ and $(u_n)$ from Proposition~\ref{rem:universal-conjugate-approximation}. The retraction $\pi:A\to D$ fixing $D$ and sending every $u_n$ to $1$ satisfies
$$
B_n:=u_nDu_n\longrightarrow B,\qquad
\pi_{\mathrm{im}}(B_n)=D\neq B=\pi_{\mathrm{im}}(B).
$$
Since $\Sub(D)$ is Hausdorff, $\pi_{\mathrm{im}}$ is not continuous at $B$. In contrast, its restriction to $\{C\in\Sub(A):\ker\pi\subseteq C\}$ is a homeomorphism onto $\Sub(D)$, with inverse $E\mapsto\pi^{-1}(E)$, by Lemma~\ref{lem:inverse-image}.
\end{remark}

\subsection{The space of subgroups}

Let $\Gamma$ be a countable discrete group. Let $\Sub(\Gamma)$ denote the space of subgroups of $\Gamma$ equipped with the \textbf{Chabauty topology}, i.e. the induced product topology coming from $\{0,1\}^{\Gamma}$.  The full $C^*$-algebra is denoted by $C^*(\Gamma)$. Given a unitary representation $\pi$ of $\Gamma$ we use the notation $C^*_\pi(\Gamma):=\pi(C^*(\Gamma))$. In the case $\pi=\lambda_\Gamma$ is the left regular representation, we obtain the reduced $C^*$-algebra, denoted by $C^*_r(\Gamma)$.

\begin{theorem}\label{thm:reduced-group-map}
Let $\pi$ be a unitary representation of $\Gamma$ and consider the map
$$
\iota_\pi:\Sub(\Gamma)\to\Sub(C_\pi^*(\Gamma)),\quad H\mapsto C^*(\pi(h):h\in H).
$$
The following holds.
\begin{enumerate}
\item The map $\iota_\pi$ is continuous from the Chabauty topology to
the lower topology, and is of Baire class~$1$ for the Wijsman target topology.
\item If $\lambda_\Gamma\prec\pi$, then $\iota_\pi$ is injective and
its inverse on its image is continuous for the induced Wijsman topology.
\item For the regular and universal representations, the images
$I_\Gamma$ and $I_\Gamma^{\rm full}$ of respectively 
$\iota_r:H\mapsto C_r^*(H)$ and $\iota:H\mapsto C^*(H)$ are Wijsman-closed.
\end{enumerate}
\end{theorem}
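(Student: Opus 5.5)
For (1), I will check lower continuity directly on the subbasic sets $\mathcal I_U$ and then invoke Lemma~\ref{lem:lower-to-borel}. Fix a norm-open $U\subseteq C^*_\pi(\Gamma)$ and $H$ with $\iota_\pi(H)\cap U\neq\emptyset$. Since the $*$-algebra spanned by $\pi(H)$ is dense in $\iota_\pi(H)$, $U$ contains some finite combination $\sum_{h\in F}c_h\pi(h)$ with $F\subseteq H$ finite. The Chabauty-open set $\{H':F\subseteq H'\}$ contains $H$, and every $H'$ in it satisfies $\iota_\pi(H')\cap U\neq\emptyset$. So $\iota_\pi^{-1}(\mathcal I_U)$ is open. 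Since $\Sub(\Gamma)$ is compact metrizable, Lemma~\ref{lem:lower-to-borel} (with $C^*_\pi(\Gamma)$ separable) gives Baire class~$1$ for the Wijsman target.

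For (2), assume $\lambda_\Gamma\prec\pi$, so there is a surjective $*$-homomorphism $\theta:C^*_\pi(\Gamma)\to C^*_r(\Gamma)$ with $\theta(\pi(g))=\lambda(g)$. Composing with the canonical trace gives a state $\tau$ on $C^*_\pi(\Gamma)$ with $\tau(\pi(g))=\delta_{g,e}$. The key tool is the "Fourier coefficient" functional $x\mapsto\tau(\pi(g)^*x)$, which has norm at most $1$. It vanishes on $\iota_\pi(H)$ whenever $g\notin H$, because it kills $\pi(h)$ for $h\in H$ and then extends to the closure by continuity. Hence, for $g\notin H$,
$$d(\pi(g),\iota_\pi(H))\geq |\tau(\pi(g)^*\pi(g))-0|=1,$$
while for $g\in H$ the distance is $0$. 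So $g\in H$ if and only if $d(\pi(g),\iota_\pi(H))<1/2$. This recovers $H$ from $\iota_\pi(H)$, giving injectivity. It also shows that $H\mapsto 1_H(g)$ is a continuous function of the Wijsman distance map $B\mapsto d(\pi(g),B)$ on the image, because that map only takes the values $0$ and $\geq1$ there. This gives continuity of the inverse into the product topology.

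For (3), I must show that if $\iota(H_n)\to B$ in the Wijsman topology (using metrizability from Theorem~\ref{thm:polish}), then $B=\iota(H)$ for some $H$. By (2), each $g$ has $d(\pi(g),\iota(H_n))\in\{0\}\cup[1,\infty)$, so the limit $d(\pi(g),B)$ also lies in $\{0\}\cup[1,\infty)$. Set $H=\{g:\pi(g)\in B\}$; this is a subgroup, since $B$ is a unital $C^*$-algebra. Then $\iota(H)\subseteq B$, and $H_n\to H$ in Chabauty because eventually $1_{H_n}(g)=1_H(g)$ for each $g$.

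The main obstacle is the reverse inclusion $B\subseteq\iota(H)$: by (1) we only have lower continuity, which gives $\liminf$-type control the wrong way. My plan is to apply the Fourier functionals again. Given $b\in B$, pick $x_n\in\iota(H_n)$ with $x_n\to b$. For $g\notin H$ we have $g\notin H_n$ eventually, so $\tau(\pi(g)^*x_n)=0$ and hence $\tau(\pi(g)^*b)=0$. This shows the Fourier support of $b$ lies in $H$. For the reduced algebra this suffices, since an element of $C^*_r(\Gamma)$ with Fourier support in $H$ lies in $C^*_r(H)$: use the conditional expectation $E_H$ onto $C^*_r(H)$ and faithfulness of the trace, so that $\tau((b-E_H b)^*(b-E_H b))=0$.

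For the full algebra I would instead use a conditional-expectation-type argument. I would try to show $d(x,\iota(H_n))\to d(x,\iota(H))$ for $x$ in the dense $*$-algebra $\C\Gamma$. For $x$ supported on a finite set $F$, $H_n\cap F=H\cap F$ eventually, and I would compare distances using the restriction map $C^*(\Gamma)\to C^*(H)$ induced by positive-definite functions $1_H$. This would show the Wijsman limit equals $\iota(H)$, hence closedness of $I_\Gamma^{\rm full}$. If that comparison fails, I would fall back on the Fourier-support argument combined with the fact that $C^*(H)\subseteq C^*(\Gamma)$ is the set of elements $b$ with $\varphi(b)=0$ for all $\varphi$ vanishing on $C^*(H)$, via Hahn–Banach, approximating such functionals.
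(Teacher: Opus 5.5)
Your parts (1) and (2) are correct, and so is the reduced case of (3). There your Fourier-coefficient argument with the faithful trace $\tau_0$ is a valid variant of the paper's argument. The gap is the full case of (3), where neither of your two plans works.

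\emph{First plan.} You propose to deduce $d(x,\iota(H_n))\to d(x,\iota(H))$ for $x\in\mathbb{C}[\Gamma]$ from $H_n\cap F=H\cap F$. This uses only Chabauty convergence $H_n\to H$. Since distance functions are $1$-Lipschitz and $\mathbb{C}[\Gamma]$ is dense, it would make $\iota$ Wijsman-continuous. Example~\ref{ex:full-group-map-discontinuous} (the $\mathrm{SL}_3(\mathbb{Z})$ example built from a Kazhdan projection) shows $\iota$ is not continuous. Comparing through the restriction map $E_H$ only yields, for finitely supported $x$ and large $n$,
$\tfrac12\|x-E_H(x)\|\le d(x,\iota(H_n))\le\|x-E_H(x)\|$.
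That example shows the factor $2$ cannot be removed.

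\emph{Fallback plan.} The trace $\tau_0\circ\lambda$ is not faithful on $C^*(\Gamma)$ when $\Gamma$ is non-amenable: nonzero elements of $\ker\lambda$ have all Fourier coefficients zero. So ``Fourier support in $H$'' does not imply $b\in C^*(H)$. The Hahn--Banach step is left unspecified exactly where the work lies, since a functional vanishing on $C^*(H)$ need not vanish on $C^*(H_n)$.

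\emph{Missing idea.} You must use the hypothesis $\iota(H_n)\to B$ together with the expectations $E_{H_n}$, not $E_H$ alone.
\begin{enumerate}
\item Let $b\in B$. For any $c\in C^*(H_n)$ one has $b-E_{H_n}(b)=(b-c)-E_{H_n}(b-c)$, so $\|b-E_{H_n}(b)\|\le 2\,d(b,C^*(H_n))\to 2\,d(b,B)=0$.
\item For $a\in\mathbb{C}[\Gamma]$, $E_{H_n}(a)=E_H(a)$ eventually. Since all these expectations are contractions, density gives $E_{H_n}(b)\to E_H(b)$ in norm.
\item Hence $b=E_H(b)\in C^*(H)$.
\end{enumerate}
This is the paper's argument, and it handles the reduced and full cases uniformly. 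It also repairs your Hahn--Banach route: for $x_n\in C^*(H_n)$ with $x_n\to b$ and $\varphi$ vanishing on $C^*(H)$, one has $\varphi(x_n)=\varphi(E_{H_n}(x_n))$. This tends to $\varphi(E_H(b))=0$, by the same convergence $E_{H_n}(b)\to E_H(b)$.
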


\begin{proof}
Put $A=C_\pi^*(\Gamma)$, $v_g=\pi(g)$, and $A_H=C^*(v_h:h\in H)$. Suppose $H_n\to H$. Given $x\in A$ and
$\varepsilon>0$, choose $y\in\operatorname{span}\{v_h:h\in H\}$ with
$\|x-y\|<d(x,A_H)+\varepsilon$. Its finite support is eventually
contained in $H_n$, so
$$
\limsup_n d(x,A_{H_n})\leq\|x-y\|<d(x,A_H)+\varepsilon.
$$
Hence, distance maps are upper semicontinuous, proving lower-continuity in $(1)$, and Lemma~\ref{lem:lower-to-borel} gives Baire class~$1$.

\medskip

\noindent For $(2)$, weak containment gives a quotient
$q:A\to C_r^*(\Gamma)$ with $q(v_g)=\lambda_g$ and $q(A_H)=C_r^*(H)$.
If $g\notin H$ and $b\in C_r^*(H)$, then $b\delta_e\in\ell^2(H)$ and
$$
\|\lambda_g-b\|\geq\|\delta_g-b\delta_e\|_2
=\sqrt{1+\|b\delta_e\|_2^2}\geq1.
$$
Contractivity of $q$, together with $0\in A_H$ and $\|v_g\|=1$, gives
$$
d(v_g,A_H)=\mathbf1_{\Gamma\setminus H}(g)
\quad\text{for all}\quad g\in\Gamma,\ H\leq\Gamma.
$$
This proves injectivity and continuity of the inverse.

\medskip

\noindent For $(3)$, take $A=C_r^*(\Gamma)$ or $C^*(\Gamma)$ and use the canonical
conditional expectations
$$
E_H:A\longrightarrow A_H\quad E_H(v_g)=\mathbf1_H(g)v_g.
$$
Suppose $A_{H_n}\to B$ in $\Sub(A)$. For every $g\in\Gamma$,
$$
1_{H_n}(g)=1-d(v_g,A_{H_n})\rightarrow1-d(v_g,B)\in\{0,1\}.
$$
The limit equals $1$ exactly when $v_g\in B$. Thus $H_n\to H$, where
$$H:=\{g\in\Gamma:v_g\in B\}$$ is a subgroup, and $A_H\subseteq B$. One has $E_{H_n}(a)\to E_H(a)$ in norm, for all $a\in\C[\Gamma]$. Contractivity and density extend this convergence to every $a\in A$. For $b\in B$,
$$
\|b-E_{H_n}(b)\|\leq2d(b,A_{H_n})\longrightarrow0.
$$
Hence $E_H(b)=b$, proving $B=A_H$.\end{proof}

\begin{remark}\label{rem:weak-containment-nonclosed}
The assumption $\lambda_\Gamma\prec\pi$ does not imply that
$\iota_\pi(\Sub(\Gamma))$ is Wijsman-closed. Take
$\Gamma=\mathbb F_\infty=\langle s_1,s_2,\ldots\rangle$,
$\pi=\lambda_\Gamma\oplus1_\Gamma$, and
$H_n=\langle s_n,s_{n+1},\ldots\rangle$.
For any non-amenable group $G$, its trivial character
$\varepsilon_G:C^*(G)\to\mathbb C$ does not factor through $\lambda_G:C^*(G)\to C_r^*(G)$. Thus some $x\in\ker \lambda_G$ satisfies $\varepsilon_G(x)=1$, so
$(0,1)\in(q_G\oplus\varepsilon_G)(C^*(G))$ and
$C^*_{\lambda_G\oplus1_G}(G)=C_r^*(G)\oplus\mathbb C$.
Applying this to $\Gamma$ and $H_n$, and using that
$\lambda_\Gamma|_{H_n}$ is a direct sum of copies of $\lambda_{H_n}$, gives
$$
A=C_r^*(\Gamma)\oplus\mathbb C
\quad\text{and}\quad A_{H_n}=C_r^*(H_n)\oplus\mathbb C.
$$
Put $K_m=\langle s_1,\ldots,s_m\rangle$ and
$B=\mathbb C1\oplus\mathbb C$. For $n>m$, we have
$K_m\cap H_n=\{e\}$, so the canonical conditional expectation $E_{K_m}$
maps $C_r^*(H_n)$ onto $\mathbb C1$. For $a\in C_r^*(K_m)$ and
$b\in C_r^*(H_n)$, $\|a-b\|\geq\|a-E_{K_m}(b)\|\geq d(a,\mathbb C1)$. Since $\mathbb C1\subseteq C_r^*(H_n)$, it follows that, for $z\in\mathbb C$, $d((a,z),A_{H_n})=d(a,C_r^*(H_n))=d(a,\mathbb C1)=d((a,z),B)$. Since the union $\bigcup_m C_r^*(K_m)$ is norm-dense in $C_r^*(\Gamma)$ and distance functions are $1$-Lipschitz, we get $A_{H_n}\to B$. If $A_H=B$, then $\lambda_h$ is scalar for every $h\in H$, hence
$H=\{e\}$. But $A_{\{e\}}=\mathbb C(1,1)\neq B$, so $B$ is not in the image.
\end{remark}

\begin{example}\label{ex:full-group-map-discontinuous}
For $\Gamma=\mathrm{SL}_3(\mathbb Z)$, the full subgroup map $\iota$
is not continuous at $\{I\}$. Write $u_g\in C^*(\Gamma)$ for the canonical unitary associated with $g\in\Gamma$. Define
$N=\ker\bigl(\Gamma\to\mathrm{SL}_3(\mathbb F_3)\bigr)$, $a=I+E_{12}$ and $H_n=\langle a^{3n+1}\rangle$. Note that
$$H_n=\{I+k(3n+1)E_{12}:k\in\mathbb Z\}$$
hence, $H_n\to\{I\}$ in the Chabauty topology.
The finite-index subgroup $N$ has property~$(T)$ since $\Gamma$ has property~$(T)$ (see \cite{BekkaHarpeValette2008}).
Let $p\in C^*(N)\subseteq C^*(\Gamma)$ be its Kazhdan projection
(associated with the trivial representation of $N$, see \cite[Definition~17.2.3, Theorem~17.2.4]{BroOza08}), so $pu_g=p$ for $g\in N$.
Put $x=pu_a$ and $y_n=u_{a^{3n+1}}\in C^*(H_n)$.
Since $a^{3n}\in N$, we have $py_n=x$ and
$$
d(x,C^*(H_n))\leq\|x-\tfrac12y_n\|
=\|(p-\tfrac12 1)y_n\|=\tfrac12.
$$
Let $q:\Gamma\to Q=\Gamma/N$ and let $\rho=\lambda_Q\circ q$
act on $\ell^2(Q)$. Since $\rho|_N$ is trivial, $\rho(p)=1$,
whereas $q(a)\neq e$. Thus, for every $z\in\mathbb C$,
$$
\|x-z1\|\geq\|(\rho(x)-z1)\delta_e\|
=\|\delta_{q(a)}-z\delta_e\|=\sqrt{1+|z|^2}\geq1.
$$
As $\|x\|\leq1$, this gives $d(x,\mathbb C1)=1$, proving discontinuity.
\end{example}

\subsection{The space of von Neumann subalgebras} Let $(M,\tau)$ be a finite von Neumann algebra with faithful normal trace $\tau$ and denote by $\Sub(M)$ the set of unital von Neumann subalgebras of $M$. For $N\in\Sub(M)$, let $E_N:M\to N$ be the unique
$\tau$-preserving conditional expectation. We consider on $\Sub(M)$ the coarsest
topology for which the maps $N\longmapsto\|E_N(x)\|_2$ are continuous, for all $x\in M$. Whenever $M$ has separable predual, the space $\Sub(M)$ is Polish by
\cite[Theorem~6.3]{fima2024michael} and, by
\cite[Theorem~4.3(2)]{FimaLeMaitreMukherjeePatri2026}, this topology is the same as the coarsest
topology for which the maps $N\longmapsto E_N(x)\in{\rm L}^2(M)$ are continuous, for all $x\in M$, where ${\rm L}^2(M)$ has the norm topology. By \cite[Theorem~4.7]{FimaLeMaitreMukherjeePatri2026}, this is the \textbf{Effros--Mar\'echal topology}.

\medskip

\noindent For a Hilbert space $H$, let $\mathcal B(H)$ denote
its algebra of bounded linear operators, and let $C''$ denote the
bicommutant of $C\subseteq\mathcal B(H)$.

\medskip

\noindent Let $A$ be a separable unital $C^*$-algebra with a tracial state
$\tau$. Let $({\rm L}^2(A,\tau),\pi,\xi)$ be the associated GNS representation and set $M=\pi(A)''\subseteq\mathcal{B}({\rm L}^2(A,\tau))$.
The formula $\tau(x)=\langle x\xi,\xi\rangle$ defines a faithful normal
tracial state on $M$.

\begin{proposition}\label{PropBC}
Let $\Phi:\Sub(A)\to\Sub(M)$, $\Phi(B)=\pi(B)''$.
\begin{enumerate}
\item For every $x\in M$, the function
$f_x(B)=\|E_{\Phi(B)}(x)\|_2$ is lower semicontinuous for the lower topology.
\item The map $\Phi$ is Borel for the lower source topology and of Baire
class~$1$ for the Wijsman source topology.
\end{enumerate}
\end{proposition}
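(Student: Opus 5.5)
The plan is to express $\|E_{\Phi(B)}(x)\|_2$ as a supremum of quantities that are continuous for the lower topology. Let $N=\Phi(B)=\pi(B)''$. Since $E_N$ is the orthogonal projection of ${\rm L}^2(M)$ onto ${\rm L}^2(N)$ restricted to $M$, we have
$$
\|E_N(x)\|_2=\sup\{|\tau(y^*x)| : y\in N,\ \|y\|_2\leq 1\}.
$$
By Kaplansky density, $\pi(B)$ is $\|\cdot\|_2$-dense in $N$. Hence the supremum can be taken over $y=\pi(b)$ with $b\in B$ and $\|\pi(b)\|_2\le 1$. Strict inequality $<1$ also works, after rescaling by a factor slightly less than $1$. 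Thus
$$
f_x(B)=\sup\{|\tau(\pi(b)^*x)|: b\in B,\ \|\pi(b)\|_2<1\}.
$$

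For (1), fix $r\ge 0$ and $B$ with $f_x(B)>r$. Choose $b\in B$ with $\|\pi(b)\|_2<1$ and $|\tau(\pi(b)^*x)|>r$. The map $a\mapsto \pi(a)$ is norm-to-$\|\cdot\|_2$ contractive, since $\|\pi(a)\|_2\le\|a\|$. Therefore
$$
U=\{a\in A:\|\pi(a)\|_2<1,\ |\tau(\pi(a)^*x)|>r\}
$$
is a norm-open subset of $A$, and it contains $b$. For every $C\in\mathcal I_U\cap\Sub(A)$ we get $f_x(C)>r$. So $\{f_x>r\}$ is the lower-open set $\mathcal I_U\cap\Sub(A)$, which gives lower semicontinuity. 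The only point needing care is the Kaplansky reduction to elements of $\pi(B)$.

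For (2), the topology on $\Sub(M)$ is generated by the maps $N\mapsto\|E_N(x)\|_2$. Since $M$ has separable predual ($A$ is separable, so ${\rm L}^2$ is separable), it suffices to use $x$ in a countable $\|\cdot\|_2$-dense subset $\{x_k\}$ of the unit ball. This works because $|\|E_N(x)\|_2-\|E_N(x')\|_2|\le\|x-x'\|_2$. The space $\Sub(M)$ is Polish, and the sets $\{N:\|E_N(x_k)\|_2\in(s,t)\}$ with $s,t$ rational form a countable subbasis. Each such preimage under $\Phi$ is the intersection of $\{f_{x_k}>s\}$, which is lower-open, and $\{f_{x_k}<t\}=\bigcup_n\{f_{x_k}\le t-1/n\}$, which is a countable union of lower-closed sets. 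Both are lower Borel, so $\Phi$ is Borel for the lower source topology.

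For the Wijsman source topology, every lower-open set is Wijsman-open. Hence each subbasic preimage is an open set intersected with an $F_\sigma$ set. In the metrizable space $\Sub(A)$ (Theorem~\ref{thm:polish}), open sets are $F_\sigma$, so each such preimage is $F_\sigma$. An arbitrary open subset of $\Sub(M)$ is a countable union of finite intersections of subbasic sets, because $\Sub(M)$ is second countable. Its preimage is therefore $F_\sigma$, which gives Baire class~$1$. This mirrors the proof of Lemma~\ref{lem:lower-to-borel}.
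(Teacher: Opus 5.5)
Your proof is correct and follows essentially the paper's route: for (1) the paper uses the identity $f_x(B)^2=\|x\|_2^2-\delta_x(B)^2$, where $\delta_x(B)$ is the $2$-norm distance from $x$ to $\pi(B)$, and shows that $\{\delta_x<r\}$ is lower-open; this is the same $2$-norm density argument as your supremum formula. For (2) the paper works with all the maps $N\mapsto\|E_N(x)\|_2$ and invokes second countability rather than building your explicit countable subbasis, and then does the same bookkeeping: the preimage of a subbasic set is an open set intersected with an $F_\sigma$ set.
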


\begin{proof}
$(1)$. For $x\in M$, Kaplansky density and orthogonal projection give
$$
f_x(B)^2=\|x\|_2^2-\delta_x(B)^2,
\quad\text{where } \delta_x(B):=\inf_{b\in B}\|x-\pi(b)\|_2.
$$
For $r>0$, the set $U_{x,r}=\{a\in A:\|x-\pi(a)\|_2<r\}$ is norm-open and
$$\{B\in\Sub(A):\delta_x(B)<r\}=\{B\in\Sub(A):B\cap U_{x,r}\neq\emptyset\}.$$
Thus $\delta_x$ is upper semicontinuous and $f_x$ is lower semicontinuous.

\medskip

\noindent$(2)$. For $x\in M$, put $q_x(N)=\|E_N(x)\|_2$.
Since the maps $q_x$ generate the topology of $\Sub(M)$ and $\Sub(M)$ is second countable,
every open subset of $\Sub(M)$ is a countable union of finite intersections of sets $q_x^{-1}((a,b))$, with $a<b$ in $\mathbb R$.
By $(1)$,
$$
\Phi^{-1}\bigl(q_x^{-1}((a,b))\bigr)
=\{f_x>a\}\cap\bigcup_{k\geq1}\{f_x\leq b-1/k\}
$$
is Borel for the lower topology and $F_\sigma$ for the Wijsman topology, since the latter is metrizable.
Countable unions and finite intersections preserve both properties, proving $(2)$.\end{proof}

\noindent Applying Proposition~\ref{PropBC} to the canonical trace on
$C_r^*(\Gamma)$ gives the following.

\begin{corollary}\label{cor:bi-commutant-map-borel}
Let $\Gamma$ be a discrete countable group. The map $$\varphi:\Sub(C^*_r(\Gamma))\rightarrow\Sub({\rm L}(\Gamma)),\quad A\mapsto A''$$
is of Baire class~1 and $\Gamma$-equivariant.
\end{corollary}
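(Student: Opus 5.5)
The corollary has two parts, Baire class~1 and $\Gamma$-equivariance, and both will come from Proposition~\ref{PropBC} applied with $A=C^*_r(\Gamma)$ and its canonical trace $\tau(x)=\langle x\delta_e,\delta_e\rangle$.

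\emph{Setting up the trace.} First I identify the GNS data. The GNS representation of $\tau$ is unitarily equivalent to the identity representation on $\ell^2(\Gamma)$, with cyclic vector $\xi=\delta_e$. Indeed, $\lambda_g\delta_e=\delta_g$ shows $\delta_e$ is cyclic, and $\tau$ is the associated vector state. Hence $\pi$ is the inclusion $C^*_r(\Gamma)\subseteq\mathcal B(\ell^2(\Gamma))$ and $M=C^*_r(\Gamma)''={\rm L}(\Gamma)$. The trace on $M$ given by $\delta_e$ is the canonical faithful normal trace, so the Effros--Mar\'echal topology used in Proposition~\ref{PropBC} is the usual one on $\Sub({\rm L}(\Gamma))$. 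Separability of $C^*_r(\Gamma)$ holds because $\Gamma$ is countable.

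\emph{Baire class~1.} Under this identification the map $\Phi$ of Proposition~\ref{PropBC} is exactly $\varphi(A)=A''$. Part~(2) of that proposition then says $\varphi$ is of Baire class~1 when $\Sub(C^*_r(\Gamma))$ carries the Wijsman topology.

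\emph{Equivariance.} The action is by conjugation, $g\cdot B=\lambda_gB\lambda_g^*$, both on $\Sub(C^*_r(\Gamma))$ and on $\Sub({\rm L}(\Gamma))$. I need $(\lambda_gB\lambda_g^*)''=\lambda_gB''\lambda_g^*$. This follows because $\Ad\lambda_g$ is a spatial automorphism of $\mathcal B(\ell^2(\Gamma))$, so it commutes with taking commutants: $(uSu^*)'=uS'u^*$ for any unitary $u$. Applying this twice gives $(uSu^*)''=uS''u^*$. I also check that $\Ad\lambda_g$ preserves both ${\rm L}(\Gamma)$ and $C^*_r(\Gamma)$, so both actions are well defined.

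I expect no real obstacle. The only point needing care is the identification of the GNS construction with $\ell^2(\Gamma)$, so that the bicommutant taken in Proposition~\ref{PropBC} agrees with the one inside $\mathcal B(\ell^2(\Gamma))$.
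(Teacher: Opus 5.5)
Your proposal is correct and follows the paper's argument: you apply Proposition~\ref{PropBC} to the canonical trace on $C^*_r(\Gamma)$, identifying its GNS representation with $\ell^2(\Gamma)$ so that $\pi_\tau(B)''$ corresponds to $B''$ and $M$ to ${\rm L}(\Gamma)$. Your equivariance argument is the paper's identity $(\lambda_gB\lambda_g^*)''=\lambda_gB''\lambda_g^*$.
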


\begin{proof} Equivariance follows from
$(\lambda_gB\lambda_g^*)''=\lambda_gB''\lambda_g^*$.\end{proof}

\begin{remark}\label{rem:bicommutant-discontinuous}
Let $X$ be an infinite compact metrizable space and $\mu$ an atomless
Borel probability measure of full support. The map
$B\mapsto B''$ from $\Sub(C(X))$ to $\Sub({\rm L}^\infty(X,\mu))$ is
discontinuous at $\mathbb C1$. By compactness, for each $n\geq1$ choose a
non-empty finite set $F_n\subseteq X$ such that
$X=\bigcup_{y\in F_n}B(y,1/n)$ then, for the Hausdorff distance, one has $d_{\mathcal H}(F_n,X)\leq1/n\to0$. Let $B_n=\{f\in C(X):f|_{F_n}\text{ is constant}\}$.
Proposition~\ref{prop:rad-dist-formula} and
Lemma~\ref{lem:minimal-enclosing-disk} give, for every $f\in C(X)$,
$$
d(f,B_n)=\mathrm{Radius}(f(F_n))
\longrightarrow\mathrm{Radius}(f(X))=d(f,\mathbb C1).
$$
Hence $B_n\to\C1$ in the Wijsman topology. Fix $n$ and put $h_k(x)=\min\{1,k\,d(x,F_n)\}$. Since $\mu(F_n)=0$, for every $f\in C(X)$ and $\xi\in{\rm L}^2(X,\mu)$, dominated convergence gives $\|(h_kf-f)\xi\|_2\to0$. Thus $h_kf\to f$ strongly. Since $h_kf\in B_n$ for all $k,n$, we deduce
$B_n''=C(X)''={\rm L}^\infty(X,\mu)\neq\mathbb C1$ for every $n$.
Consequently, $\Phi$ in Proposition~\ref{PropBC} need not be continuous.
Taking $X=\mathbb T$ with Haar measure and using the Fourier transform gives the same conclusion for $\varphi$ in Corollary~\ref{cor:bi-commutant-map-borel}, with $\Gamma=\mathbb Z$.\end{remark}

\section{Nuclearity}\label{sec: nuc}

\noindent Let $A$ and $B$ be unital $C^*$-algebras. A unital $*$-homomorphism
$\pi:A\to B$ is \textbf{nuclear} if, for every finite $F\subseteq A$ and
$\varepsilon>0$, there are $n\geq1$ and contractive completely positive (CCP)
maps $\phi:A\to M_n(\C)$ and $\psi:M_n(\C)\to B$ such that
$$
\|\psi\phi(a)-\pi(a)\|<\varepsilon\quad\text{for all }a\in F.
$$

\noindent A unital $C^*$-algebra $A$ is called \textbf{nuclear} if the identity map $\id:A\longrightarrow A$ is nuclear. For a discrete group $\Gamma$, amenability of $\Gamma$ is equivalent to nuclearity of $C_r^*(\Gamma)$.

\subsection{Continuous selections}

For $C^*$-algebras $A$ and $B$, let $\mathrm{CCP}(A,B)$ denote the space
of contractive completely positive maps from $A$ to $B$, endowed with the
point-norm topology. If $A$ is finite dimensional, this topology is induced by the operator norm on the space $\mathcal{L}(A,B)$ of  linear maps from $A$ to $B$.

\begin{lemma}\label{lem:selection-dense-unit-ball}
Let $A$ be a separable unital $C^*$-algebra and let $A_1$ be its closed
unit ball. There are Wijsman-continuous maps $g_k:\Sub(A)\to A_1$,
$k\in\N$, such that $\{g_k(B):k\in\N\}$ is dense in $B_1$ for every
$B\in\Sub(A)$.
\end{lemma}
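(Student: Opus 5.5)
The plan is to build the selections by a Michael-type successive approximation. The only inputs needed from the Wijsman topology are the continuity of $B\mapsto d(x,B)$ for each fixed $x\in A$, and the fact that $B$ is a closed linear subspace, hence convex. Fix a countable dense subset $\{a_m\}$ of $A_1$. For each $m$ we will produce a continuous map $g_m:\Sub(A)\to A_1$ satisfying
$$\|g_m(B)-a_m\|\le 4\,d(a_m,B_1)\qquad\text{and}\qquad g_m(B)\in B_1 .$$
This gives density. Given $b\in B_1$ and $\varepsilon>0$, pick $a_m$ with $\|a_m-b\|<\varepsilon$. Then $d(a_m,B_1)<\varepsilon$, so $\|g_m(B)-b\|<5\varepsilon$.

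To make this work we first need continuity of $B\mapsto d(x,B_1)$, not only of $B\mapsto d(x,B)$. Upper semicontinuity follows the argument of Proposition~\ref{rem:subspace-closed}(3): approximate $x$ by a slightly shrunken $tb$ with $b\in B_1$. For lower semicontinuity we use the elementary estimate
$$d(x,B)\le d(x,B_1)\le 2\,d(x,B)+\bigl(\|x\|-1\bigr)_+ ,$$
obtained by taking a near-best $y\in B$ and radially retracting it into $B_1$. This estimate alone does not give exact continuity. To avoid needing it, we will work directly with the maps $B\mapsto d(x,B)$, and only at the very end use contractivity of $b\mapsto b/\max(1,\|b\|)$, which moves a point by at most twice its distance to a point of norm at most $1$.

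The core step is the continuous approximate selection. Given $x\in A$, we want a continuous $s_x:\Sub(A)\to A$ with
$$d(s_x(B),B)\le \tfrac12 d(x,B)\qquad\text{and}\qquad \|s_x(B)-x\|\le 2\,d(x,B)+\eta(B),$$
where $\eta>0$ is a small continuous error. The construction uses a partition of unity, which exists because $\Sub(A)$ is metrizable by Theorem~\ref{thm:polish}. Around each $B_0$, choose $y\in B_0$ with $\|x-y\|<d(x,B_0)+\eta$. The set of $B$ with $d(y,B)<\tfrac12 d(x,B)$ and $d(x,B)>d(x,B_0)-\eta$ is an open neighbourhood of $B_0$, on the region where $d(x,B)>0$. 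On this open cover, glue the constant points $y$ with a locally finite continuous partition of unity. Convexity of $B$ is what makes the glued point satisfy both estimates, since $B\mapsto d(z,B)$ is convex in $z$.

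Iterating gives the selection. Set $x_0=a_m$ and $x_{k+1}=s_{x_k}$, applied pointwise in $B$. Each map $B\mapsto x_{k+1}(B)$ is continuous, and $d(x_{k+1}(B),B)\le 2^{-k-1}d(a_m,B)$. The increments satisfy $\|x_{k+1}-x_k\|\le 2^{1-k}d(a_m,B)+\eta_k$ with $\sum_k\eta_k$ small. The sequence is therefore uniformly Cauchy, so it converges uniformly to a continuous map $B\mapsto x_\infty(B)$ with $x_\infty(B)\in B$ (as $B$ is closed) and $\|x_\infty(B)-a_m\|\lesssim 4\,d(a_m,B)$. Finally set $g_m(B)=x_\infty(B)/\max(1,\|x_\infty(B)\|)$. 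This is continuous, lies in $B_1$ because $1\in B$, and it is close to $a_m$ whenever $a_m$ is close to $B_1$, which is what the density argument above requires.

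The main obstacle is the degenerate locus where $d(x,B)=0$, since there the strict inequality defining the cover fails. We handle it by letting the tolerance $\eta(B)$ be a fixed small constant $\varepsilon_k$ at stage $k$ instead of being proportional to $d(x,B)$. Near that locus, $x$ itself is then an admissible choice, so we also use the set $\{B: d(x,B)<\varepsilon_k\}$ with the constant point $x$. Using summable errors $\varepsilon_k$ then still yields uniform convergence and the $4d+\text{small}$ bound. Since small is enough for the density conclusion, we can take countably many maps $g_{m,j}$, one for each error level $1/j$, and reindex them over $\N$.
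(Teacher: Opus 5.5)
Your strategy is sound and, once one step is stated correctly (see below), gives a complete proof. It follows a different route from the paper.

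\textbf{Comparison with the paper.} The paper does not build the selections by hand. It invokes the Michael-type selection result \cite[Corollary~5.5]{fima2024michael} for lower-continuous maps with closed convex values in the complete separable convex set $A_1$, applied to $B\mapsto B_1$. The only thing it checks is that this map is Wijsman-to-lower continuous. That check is exactly the radial retraction estimate $\|x-x/\max\{1,\|x\|\}\|\le\|x-y\|$ for $\|y\|\le 1$, which you use only at the very end.

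You instead reprove the selection theorem in this special case, and for the map $B\mapsto B$ rather than $B\mapsto B_1$. Your ingredients are partitions of unity on the metrizable space $\Sub(A)$, convexity of $z\mapsto d(z,B)$, and successive halving of the distance, with the retraction applied only to the limit. This gives a self-contained argument with the explicit bound $\|g(B)-a\|\le 4\,d(a,B_1)+\delta$, and it neatly avoids the question of whether $B\mapsto d(x,B_1)$ is continuous. The cost is length. Also, your estimate $\|s(B)-x\|< d(x,B)+\eta$ on a neighbourhood uses lower semicontinuity of $B\mapsto d(x,B)$, i.e.\ the full Wijsman topology. The cited theorem needs only lower continuity of the set-valued map and metrizability of the domain.

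\textbf{The step to fix.} Writing $x_{k+1}=s_{x_k}$ ``applied pointwise in $B$'', with $s_x$ built separately for each fixed $x\in A$, does not give continuity of $B\mapsto s_{x_k(B)}(B)$. Your partitions of unity depend on $x$ in an uncontrolled way. You must instead run the construction of $s$ with a continuous input map $B\mapsto x(B)$. Around $B_0$, pick $y\in B_0$ with $\|x(B_0)-y\|<d(x(B_0),B_0)+\eta$, and use the set
\[
\Bigl\{B:\ d(y,B)<\frac12\, d(x(B),B)+\varepsilon_k,\quad \|x(B)-y\|<d(x(B),B)+\eta\Bigr\}.
\]
It is open because $(z,B)\mapsto d(z,B)$ is jointly continuous: it is $1$-Lipschitz in $z$ and Wijsman-continuous in $B$.

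With the additive tolerance $\varepsilon_k>0$ built into the first condition, every $B_0$ lies in its own set. So the degenerate locus $d(x(B),B)=0$ needs no separate treatment. Gluing with a partition of unity gives $d(x_{k+1}(B),B)<\frac12 d(x_k(B),B)+\varepsilon_k$ and $\|x_{k+1}(B)-x_k(B)\|<d(x_k(B),B)+\eta_k$. With $(\varepsilon_k)$ and $(\eta_k)$ summable, this yields uniform convergence, $x_\infty(B)\in B$, and $\|x_\infty(B)-a\|\le 2\,d(a,B)+2\sum_k\varepsilon_k+\sum_k\eta_k$, as you claim.

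Finally, a minor point: $g_m(B)\in B_1$ because $B$ is a linear subspace, not because $1\in B$.
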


\begin{proof}
By \cite[Corollary~5.5]{fima2024michael} applied to the complete separable
convex set $A_1$, it suffices to show that the map
$\Phi:\Sub(A)\to\mathcal F^*_{\rm conv}(A_1)$, $B\mapsto B_1$ is Wijsman to lower continuous. Let $U\subset A_1$ be open and let us show that
$$\Phi^{-1}(\mathcal{I}_U)=\{B\in\Sub(A):B_1\cap U\neq\emptyset\}$$
is Wijsman open. Fix $C\in \Phi^{-1}(\mathcal{I}_U)$ and let $y\in C_1\cap U$. Since $U$ is open, there exists $\varepsilon>0$ such that $\{x\in A_1:\|x-y\|<\varepsilon\}\subseteq U$. Define the Wijsman-open neighbourhood
$\mathcal{U}:=\{B\in\Sub(A):d(y,B)<\varepsilon/4\}$ of $C$. Let $B\in\mathcal{U}$. There exists $x\in B$ such that $\Vert x-y\Vert<\varepsilon/2$. For $z:=\frac{x}{\max\{1,\|x\|\}}\in B_1$ one has, since $\Vert y\Vert\leq1$,
$$\Vert x-z\Vert=\max\{0,\|x\|-1\}\leq\|x-y\|<\varepsilon/2.$$
It follows that $\Vert z-y\Vert\leq\Vert z-x\Vert+\Vert x-y\Vert<\varepsilon$ hence $z\in U$ so $B\in\Phi^{-1}(\mathcal{I}_U)$.\end{proof}

\begin{lemma}\label{lem:selection-dense-ccp}
Let $A$ be a separable unital $C^*$-algebra and $n\geq1$. There are
Wijsman-continuous maps $f_{n,r}:\Sub(A)\to\mathrm{CCP}(M_n(\C),A)$, for $r\in\N$, such that the set 
$\{f_{n,r}(B):r\in\N\}$ is dense in $\mathrm{CCP}(M_n(\C),B)$ for all $B\in\Sub(A)$.
\end{lemma}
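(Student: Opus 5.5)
We reduce to Lemma~\ref{lem:selection-dense-unit-ball} through the Choi correspondence. For a unital $C^*$-algebra $B$, a linear map $\psi:M_n(\C)\to B$ is completely positive if and only if its Choi matrix $C_\psi=[\psi(e_{ij})]_{i,j}\in M_n(B)$ is positive. Moreover $\|\psi\|=\|\psi(1)\|=\|\sum_i\psi(e_{ii})\|$ for completely positive $\psi$. The map $\psi\mapsto C_\psi$ is a linear homeomorphism between $\mathcal L(M_n(\C),A)$ (operator norm) and $M_n(A)$ (norm). So it suffices to produce continuous maps $\Sub(A)\to M_n(A)$ whose values are positive, have ``trace'' $\sum_i x_{ii}$ of norm at most $1$, and are dense among such matrices over $B$.

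Our first step is to obtain continuous dense selections of $M_n(B)_1$. We apply Lemma~\ref{lem:selection-dense-unit-ball} to get $g_k$. For each finite tuple $\mathbf k=(k_{ij})\in\N^{n\times n}$ we set $G_{\mathbf k}(B)=[g_{k_{ij}}(B)/n^2]$. This is continuous, lies in $M_n(B)$, and the set of all $G_{\mathbf k}(B)$ is dense in $\tfrac1{n^2}(B_1)^{n\times n}$. Rescaling by rational factors $t\in\Q\cap(0,n^2]$ gives continuous maps whose values are dense in $M_n(B)$ restricted to a large ball. (One can avoid this rescaling issue by noting that any $x\in M_n(B)$ with $\|x\|\le 1$ has entries in $B_1$, so $t G_{\mathbf k}$ with $t=n^2$ suffices.)

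Next we correct to CCP maps by a continuous retraction. Given $y\in M_n(A)$, we put $p(y)=(y^*y)^{1/2}$, which is positive, lies in $M_n(B)$ if $y$ does, and depends continuously on $y$ by continuity of functional calculus. We then put
\[
r(y)=\frac{p(y)}{\max\{1,\|\sum_i p(y)_{ii}\|\}},
\]
which is continuous, positive, and has trace-norm at most $1$. If $x\in M_n(B)$ is already the Choi matrix of a CCP map, then $x=p(x)$ and $r(x)=x$. Hence for $y$ close to $x$, $r(y)$ is close to $r(x)=x$. Composing, we define $f_{n,r}(B)$ as the map whose Choi matrix is $r(n^2G_{\mathbf k}(B))$, enumerated over $r\leftrightarrow\mathbf k$. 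Then $f_{n,r}$ is continuous into $\mathrm{CCP}(M_n(\C),A)$, takes values in $\mathrm{CCP}(M_n(\C),B)$, and has dense range there, since every Choi matrix $x$ of a CCP map into $B$ has $\|x\|\le n\|\psi\|\le n\le n^2$, so it is approximated by some $n^2G_{\mathbf k}(B)$, and $r$ fixes $x$.

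The main point to verify carefully is the Choi characterization of contractivity, namely that $\|\psi\|=\|\psi(1)\|$ equals the norm of the diagonal sum, together with the norm bound $\|C_\psi\|\le n$. If the bound is off, we simply enlarge the scaling constant; density is unaffected.
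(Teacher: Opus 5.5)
Your proof is correct and is essentially the paper's argument. Both compose the selections $g_k$ of Lemma~\ref{lem:selection-dense-unit-ball} with a continuous Choi-type normalisation $M_n(A)\to\mathrm{CCP}(M_n(\C),A)$ that sends $M_n(B)$ into $\mathrm{CCP}(M_n(\C),B)$. The paper uses the surjection $T\mapsto T^*T/\max\{1,\|\sum_k(T^*T)_{kk}\|\}$ and approximates $S=C_\phi^{1/2}$, bounding its entries via $S_{ij}^*S_{ij}\leq\phi(E_{jj})$; you instead use a retraction built from $|y|=(y^*y)^{1/2}$ and approximate $C_\psi$ itself.

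One bookkeeping slip: $n^2G_{\mathbf k}(B)=[g_{k_{ij}}(B)]$ is only dense in $(B_1)^{n\times n}$, so the bound $\|C_\psi\|\leq n^2$ does not justify your final approximation. What does justify it is $\|\psi(e_{ij})\|\leq\|\psi\|\leq1$, which puts every entry of $C_\psi$ in $B_1$.
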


\begin{proof}
For $T\in M_n(A)$, let $Q(T):M_n(\C)\to A$ be the linear map determined by
$$
Q(T)(E_{ij})=
\frac{(T^*T)_{ij}}
{\max\{1,\|\sum_{k=1}^n(T^*T)_{kk}\|\}}
\quad\text{for all }1\leq i,j\leq n.
$$
By Choi's theorem \cite[Proposition~1.5.12]{BroOza08}, $Q(T)$ is completely
positive, and $\|Q(T)\|=\|Q(T)(1)\|\leq1$.
Each $T\mapsto Q(T)(E_{ij})$ is norm-continuous, since its numerator and
denominator are continuous and the latter is at least $1$. The estimate
$$
\|\psi-\phi\|\leq\sum_{i,j=1}^n\|\psi(E_{ij})-\phi(E_{ij})\|
\qquad(\psi,\phi\in\mathcal L(M_n(\C),A))
$$
therefore shows that $Q$ is continuous in the operator norm.

\medskip

\noindent Let $(g_k)$ be as in Lemma~\ref{lem:selection-dense-unit-ball}.
For $\alpha=(k_{ij})_{1\leq i,j\leq n}\in\N^{n^2}$, define
$$
S_\alpha(B)=(g_{k_{ij}}(B))_{1\leq i,j\leq n}\in M_n(B)\quad\text{and}\quad
f_\alpha(B)=Q(S_\alpha(B)).
$$
Each $f_\alpha$ is Wijsman-continuous and takes its value at $B$ in $\mathrm{CCP}(M_n(\C),B)$.

\medskip

\noindent Fix $\phi\in\mathrm{CCP}(M_n(\C),B)$ and set
$S=((\phi(E_{ij}))_{ij})^{1/2}\in M_n(B)_+$. Since
$$
S_{ij}^*S_{ij}\leq\sum_{k=1}^nS_{kj}^*S_{kj}
=\phi(E_{jj})\leq1,
$$
each $S_{ij}$ belongs to $B_1$. By density of the $g_k(B)$ in $B_1$,
there are $\alpha_m\in\N^{n^2}$ such that $S_{\alpha_m}(B)\to S$ in norm.
Moreover, $Q(S)=\phi$, since $S^*S=(\phi(E_{ij}))_{ij}$ and
$\|\phi(1)\|\leq1$. Continuity of $Q$ gives
$f_{\alpha_m}(B)\to\phi$ in operator norm. Enumerating $\N^{n^2}$ yields
the required family.
\end{proof}

\subsection{\texorpdfstring{The set of nuclear $C^*$-subalgebras}{The set of nuclear C*-subalgebras}}\label{subsec: set of nuclear}

It is known that the set of amenable subgroups of a discrete countable group $\Gamma$ is closed in $\Sub(\Gamma)$. More generally, it is shown in \cite[Proposition~4.7]{AHO25} that the set of amenable von Neumann subalgebras of a finite von Neumann algebra $M$ with separable predual is closed in $\Sub(M)$ (see also \cite[Proposition~5.2]{FimaLeMaitreMukherjeePatri2026} for a more general statement and a more elementary proof). In contrast to the finite case, it is shown in \cite[Theorem~5.8]{haagerupEffrosMarechalTopology2000} that every Connes-embeddable $\rm{II}_1$ factor $N\subset\mathcal{B}(H)$, with $H$ separable, is approximable in the Mar\'echal topology on the space of von Neumann subalgebras of $\mathcal{B}(H)$ by amenable von-Neumann algebras.

\medskip

\noindent In this subsection, we show that the set of nuclear $C^*$-subalgebras of a separable unital $C^*$-algebra $A$ is $G_\delta$ in $\Sub(A)$ for the Wijsman topology and not closed in general.

\begin{proposition}\label{thm:nuclear-subalgebras-G-delta}
  Let $A$ be a unital separable $C^*$-algebra.
  Then the set
  $$
    \mathrm{Sub}_{\mathrm{nuc}}(A)\coloneqq \{B\in \mathrm{Sub}(A): B \text{ is nuclear}\}
  $$
  is a $G_{\delta}$ subset of $\mathrm{Sub}(A)$ endowed with the Wijsman topology.
\end{proposition}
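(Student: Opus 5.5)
We will express nuclearity of $B$ as a countable intersection of open conditions, using the continuous selections of Lemma~\ref{lem:selection-dense-unit-ball} and Lemma~\ref{lem:selection-dense-ccp}. Let $(g_k)$ be the maps from Lemma~\ref{lem:selection-dense-unit-ball}, so $\{g_k(B)\}$ is dense in $B_1$. By a standard reformulation, $B$ is nuclear if and only if for every finite set $F\subseteq\{g_k(B):k\in\N\}$ and $\varepsilon>0$ there are $n$ and CCP maps $\phi:B\to M_n(\C)$, $\psi:M_n(\C)\to B$ with $\|\psi\phi(b)-b\|<\varepsilon$ on $F$. Density and contractivity let us restrict to finite subsets of the $g_k(B)$ and to $\varepsilon=1/m$. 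Thus
$$
\Sub_{\rm nuc}(A)=\bigcap_{N,m\geq1}\mathcal U_{N,m},\qquad
\mathcal U_{N,m}=\Bigl\{B:\exists n,\phi,\psi\ \text{with}\ \max_{k\leq N}\|\psi\phi(g_k(B))-g_k(B)\|<1/m\Bigr\},
$$
and it remains to show each $\mathcal U_{N,m}$ is Wijsman-open.

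The difficulty is the map $\phi:B\to M_n(\C)$, whose domain varies with $B$. We remove this dependence with Arveson's extension theorem: every CCP map $B\to M_n(\C)$ extends to a CCP map $A\to M_n(\C)$, and restricting a CCP map $A\to M_n(\C)$ gives one on $B$. So in $\mathcal U_{N,m}$ we may let $\phi$ range over $\mathrm{CCP}(A,M_n(\C))$, a set independent of $B$. For $\psi$ we use Lemma~\ref{lem:selection-dense-ccp}: since the $f_{n,r}(B)$ are dense in $\mathrm{CCP}(M_n(\C),B)$ for the operator norm and the relevant inputs $\phi(g_k(B))$ are bounded, we may take $\psi=f_{n,r}(B)$ for some $r$. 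This gives
$$
\mathcal U_{N,m}=\bigcup_{n,r}\ \bigcup_{\phi\in\mathrm{CCP}(A,M_n(\C))}\Bigl\{B:\max_{k\leq N}\bigl\|f_{n,r}(B)\bigl(\phi(g_k(B))\bigr)-g_k(B)\bigr\|<1/m\Bigr\}.
$$

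Each inner set is Wijsman-open. The maps $B\mapsto g_k(B)\in A$ and $B\mapsto f_{n,r}(B)\in\mathcal L(M_n(\C),A)$ (operator norm) are continuous, and $\phi$ is a fixed contraction. Evaluation $(T,x)\mapsto T(x)$ is jointly continuous on $\mathcal L(M_n(\C),A)\times M_n(\C)$, so the function $B\mapsto f_{n,r}(B)(\phi(g_k(B)))-g_k(B)$ is norm-continuous. A union of open sets is open, so $\mathcal U_{N,m}$ is open and $\Sub_{\rm nuc}(A)$ is $G_\delta$.

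The main obstacle is justifying the two reductions. The first is that testing on the dense family $\{g_k(B)\}$ with finite $F$ suffices; this is a $3\varepsilon$ argument using $\|\psi\phi\|\leq1$. The second is that the Arveson extension does not change the condition, which holds because only the values on $B$ are tested. I expect the Arveson step to be the conceptually essential one, since it makes the union over $\phi$ independent of $B$ and hence makes openness possible.
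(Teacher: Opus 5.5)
Your proposal is correct and matches the paper's proof: it intersects open sets that are unions over $n$, $r$ and $\phi\in\mathrm{CCP}(A,M_n(\C))$ of conditions on $f_{n,r}(B)(\phi(g_k(B)))-g_k(B)$. As in the paper, Arveson's extension theorem makes the union over $\phi$ independent of $B$, and a density argument on the $g_k(B)$ gives the reverse inclusion. Where the paper builds in slack by approximating within $2^{-q}/2$, you use the room left by the strict inequality; the two are equivalent.
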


\begin{proof}
Choose continuous maps $g_k:\Sub(A)\to A_1$ whose values are dense in
$B_1$ at each $B$, as in Lemma~\ref{lem:selection-dense-unit-ball}.
For each $n$, choose the continuous maps $f_{n,r}$ of
Lemma~\ref{lem:selection-dense-ccp}.

\noindent For $m,q\geq1$, define
$$
U_{m,q}=\bigcup_{\substack{n,r\geq1\\\phi\in\mathrm{CCP}(A,M_n(\C))}}
\left\{B\in\Sub(A):\max_{1\leq k\leq m}
\|f_{n,r}(B)(\phi(g_k(B)))-g_k(B)\|<2^{-q}\right\}.
$$
By continuity of $g_k$ and $f_{n,r}$ and since $\phi$ is a fixed contraction, each set in this union is open. Hence $U_{m,q}$ is open. We claim that $\Sub_{\mathrm{nuc}}(A)=\bigcap_{m,q\geq1}U_{m,q}$.

\medskip

\noindent If $B$ is nuclear, fix $m,q\geq1$ and choose $n\geq1$ and CCP maps
$\phi_0:B\to M_n(\C)$ and $\psi:M_n(\C)\to B$ such that $\|\psi(\phi_0(g_k(B)))-g_k(B)\|<2^{-q}/2$ for all $1\leq k\leq m$. By Arveson's extension theorem, extend $\phi_0$ to a CCP map
$\phi:A\to M_n(\C)$. Choose $r$ so that
$\|f_{n,r}(B)-\psi\|_{\mathcal{L}(M_n(\C),B)}<2^{-q}/2$. For $1\leq k\leq m$,
$$
\|f_{n,r}(B)(\phi(g_k(B)))-g_k(B)\|
\leq\|f_{n,r}(B)-\psi\|+\|\psi(\phi_0(g_k(B)))-g_k(B)\|<2^{-q}.
$$
Hence $B\in U_{m,q}$.

\medskip

\noindent Conversely, suppose that $B\in\bigcap_{m,q}U_{m,q}$. Given a finite
$F\subseteq B_1$ and $\varepsilon>0$, choose for each $x\in F$ an index
$k(x)$ with $\|x-g_{k(x)}(B)\|<\varepsilon/4$. Take
$m\geq\max_{x\in F}k(x)$ and $q$ with $2^{-q}<\varepsilon/2$.
Since $B\in U_{m,q}$ we get $n,r\geq 1$ and $\phi\in\mathrm{CCP}(A,M_n(\C))$ such that, writing $\psi:=f_{n,r}(B)\in\mathrm{CCP}(M_n(\C),B)$ one has $\|\psi\phi(g_k(B))-g_k(B)\|<2^{-q}$ for all $k\leq m$. Hence, for all $x\in F$,
$$
\|\psi\phi(x)-x\|
\leq2\|x-g_{k(x)}(B)\|
   +\|\psi\phi(g_{k(x)}(B))-g_{k(x)}(B)\|<\varepsilon.
$$
Restricting $\phi$ to $B$ proves that $B$ is nuclear.\end{proof}

\begin{example}\label{ex:nuclear-nonclosed-candidate}\label{ex:nuclear-nonclosed}
There is a separable unital $C^*$-algebra $A$ for which $\Sub_{\mathrm{nuc}}(A)$ is not Wijsman-closed. Indeed, let
$B:=C_r^*(\mathbb F_2)\subseteq D:=C(\partial\mathbb F_2)\rtimes_r\mathbb F_2$.
The algebra $D$ is nuclear by \cite[Corollary~2]{Ozawa2007Boundaries}, whereas $B$ is not nuclear. Proposition~\ref{rem:universal-conjugate-approximation} gives $A\supseteq D$ and $B_n=u_nDu_n\to B$ in $\Sub(A)$. Each $B_n\cong D$ is nuclear.
\end{example}

\subsection{\texorpdfstring{Relatively nuclear and exact $C^*$-subalgebras}{Relatively nuclear and exact C*-subalgebras}}
 Let $A$ be a unital $C^{*}$-algebra and let $B$ be a unital $C^{*}$-subalgebra of $A$. We say that $B$ is \textbf{relatively nuclear} in $A$ if the inclusion map $\iota\colon B\hookrightarrow A$ is nuclear. Note that if $A$ is nuclear, then every unital 
$C^*$-subalgebra is relatively nuclear in $A$. 

\begin{proposition}\label{prop:relative-nuclear-closed}
Let $A$ be a unital $C^*$-algebra. The set
$$
\Sub_{\mathrm{rnuc}}(A)=
\{B\in\Sub(A):B\hookrightarrow A\text{ is nuclear}\}
$$
is closed for the lower topology, and hence for the Wijsman topology.
\end{proposition}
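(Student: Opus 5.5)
To prove closedness for the lower topology, I will show directly that the complement of $\Sub_{\mathrm{rnuc}}(A)$ is lower-open. Once that is done, the Wijsman statement is automatic, since every lower-open set is Wijsman-open. The key is to rewrite nuclearity of the inclusion as a condition that only involves finitely many elements of $B$ that can be moved slightly.

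\emph{Step 1: reformulation.} I will show that $B\in\Sub_{\mathrm{rnuc}}(A)$ if and only if the following holds: for every finite $F\subseteq B$ and $\varepsilon>0$ there are $n\geq1$ and CCP maps $\phi:A\to M_n(\C)$, $\psi:M_n(\C)\to A$ with $\|\psi\phi(x)-x\|<\varepsilon$ for all $x\in F$.

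In the forward direction, a CCP map $\phi_0:B\to M_n(\C)$ witnessing nuclearity of $\iota$ extends to a CCP map $\phi:A\to M_n(\C)$ by Arveson's extension theorem. In the converse direction, we simply restrict $\phi$ to $B$. The point of this reformulation is that the maps $\phi$ and $\psi$ are now defined on and into $A$ itself, and no longer depend on $B$.

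\emph{Step 2: the complement is lower-open.} Suppose $B\notin\Sub_{\mathrm{rnuc}}(A)$. Then there are a finite set $F=\{x_1,\dots,x_m\}\subseteq B$ and $\varepsilon>0$ such that for all $n$ and all CCP maps $\phi:A\to M_n(\C)$, $\psi:M_n(\C)\to A$ we have
\[
\max_k\|\psi\phi(x_k)-x_k\|\geq\varepsilon.
\]
Let $U_k$ be the open ball of radius $\varepsilon/3$ around $x_k$, and consider the lower-open set
\[
\mathcal V=\bigcap_{k=1}^m\bigl(\mathcal I_{U_k}\cap\Sub(A)\bigr),
\]
which contains $B$.

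I claim that $\mathcal V$ does not meet $\Sub_{\mathrm{rnuc}}(A)$. Take $C\in\mathcal V$ and pick $y_k\in C\cap U_k$. If $C$ were relatively nuclear, Step 1 would give CCP maps $\phi,\psi$ with $\|\psi\phi(y_k)-y_k\|<\varepsilon/3$ for every $k$. Since $\psi\phi$ is contractive,
\[
\|\psi\phi(x_k)-x_k\|\leq 2\|x_k-y_k\|+\|\psi\phi(y_k)-y_k\|<\varepsilon
\]
for every $k$, which contradicts the choice of $F$ and $\varepsilon$.

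I expect the only real subtlety to be Step 1. Without it, the competing maps would be defined on the varying subalgebra $B$, and no fixed pair of maps could be compared across nearby subalgebras. Arveson extension removes this dependence, and it requires no separability.
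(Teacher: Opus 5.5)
Your proof is correct and is essentially the paper's argument. The paper takes a net $B_i\to B$ in the lower topology and shows that $B$ is relatively nuclear, using the same lower neighbourhood $\bigcap_{b\in F}\{C:C\cap B(b,\varepsilon/4)\neq\emptyset\}$, Arveson extension of $\phi_0:B_i\to M_n(\C)$ to $A$, and the same estimate $\|\psi\phi(b)-b\|\leq 2\|b-b'\|+\|\psi\phi_0(b')-b'\|$; your contrapositive formulation, showing that the complement is lower-open, is equivalent to this net argument.
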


\begin{proof}
Let $(B_i)$ be a net in $\Sub_{\mathrm{rnuc}}(A)$ converging to $B$ for
the lower topology. Fix a finite $F\subseteq B$ and $\varepsilon>0$. Since $F$ is finite, the set
$$
\mathcal U=\bigcap_{b\in F}\{C\in\Sub(A):C\cap B(b,\varepsilon/4)\neq\emptyset\}
$$
is a lower-open neighbourhood of $B$.
Lower convergence gives $B_i\in\mathcal U$ for all sufficiently large $i$.
Fix such an $i$ and, for each $b\in F$, choose $b'\in B_i$ with
$\|b-b'\|<\varepsilon/4$. Nuclearity of $B_i\hookrightarrow A$ gives
CCP maps $\phi_0:B_i\to M_n(\C)$ and $\psi:M_n(\C)\to A$ such that
$\|\psi\phi_0(b')-b'\|<\varepsilon/2$ for these finitely many $b'$.
Extend $\phi_0$ to a CCP map $\phi:A\to M_n(\C)$ by Arveson's theorem. Then, for every $b\in F$,
$$
\|\psi\phi(b)-b\|
\leq2\|b-b'\|+\|\psi\phi_0(b')-b'\|<\varepsilon.
$$
Restricting $\phi$ to $B$ proves that the inclusion $B\hookrightarrow A$ is nuclear.\end{proof}

\noindent Recall that a unital $C^*$-algebra $B$ is \textbf{exact} if it admits a
faithful unital representation $\pi:B\to\mathcal B(H)$ which is nuclear. Equivalently, every faithful unital representation is nuclear. In particular, nuclear $C^*$-algebras are exact.

\begin{proposition}\label{prop:exact-subalgebras-closed}
Let $A$ be a unital $C^*$-algebra. The set
$$
\Sub_{\mathrm{ex}}(A)=\{B\in\Sub(A):B\text{ is exact}\}
$$
is closed for the lower topology, and hence for the Wijsman topology.
Moreover, with closure taken in the Wijsman topology,
$\overline{\Sub_{\mathrm{nuc}}(A)}
\subseteq\Sub_{\mathrm{rnuc}}(A)
\subseteq\Sub_{\mathrm{ex}}(A)$.
\end{proposition}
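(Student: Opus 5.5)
We split the statement into three parts: closedness of $\Sub_{\mathrm{ex}}(A)$ for the lower topology, the inclusion $\Sub_{\mathrm{rnuc}}(A)\subseteq\Sub_{\mathrm{ex}}(A)$, and the inclusion $\overline{\Sub_{\mathrm{nuc}}(A)}\subseteq\Sub_{\mathrm{rnuc}}(A)$. Throughout we fix a faithful unital representation $\sigma:A\to\mathcal B(H)$. For $B\in\Sub(A)$, the restriction $\sigma|_B$ is a faithful unital representation of $B$. By the stated characterisation of exactness, $B$ is exact if and only if $\sigma|_B$ is nuclear.

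\emph{Closedness.} We imitate the proof of Proposition~\ref{prop:relative-nuclear-closed}, with $A$ replaced by $\mathcal B(H)$ and the inclusion replaced by $\sigma$. Let $B_i\to B$ be a net for the lower topology with each $B_i$ exact. Fix a finite $F\subseteq B$ and $\varepsilon>0$. For large $i$ we pick $b'\in B_i$ with $\|b-b'\|<\varepsilon/4$ for each $b\in F$. Nuclearity of $\sigma|_{B_i}$ then gives CCP maps $\phi_0:B_i\to M_n(\C)$ and $\psi:M_n(\C)\to\mathcal B(H)$ with $\|\psi\phi_0(b')-\sigma(b')\|<\varepsilon/2$. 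We extend $\phi_0$ to $A$ by Arveson's theorem and restrict it to $B$. Since $\sigma$ and the CCP maps are contractive, the same triangle inequality as before gives $\|\psi\phi(b)-\sigma(b)\|<\varepsilon$. Hence $\sigma|_B$ is nuclear and $B$ is exact. Wijsman closedness follows because the lower topology is coarser than the Wijsman topology.

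\emph{Inclusions.} If $B\hookrightarrow A$ is nuclear, then composing with $\sigma$ gives that $\sigma|_B$ is nuclear. Indeed, $\sigma\circ\psi$ is CCP whenever $\psi$ is, and $\sigma$ is contractive, so the approximations transfer. Thus $\Sub_{\mathrm{rnuc}}(A)\subseteq\Sub_{\mathrm{ex}}(A)$. Next, every nuclear $B$ is relatively nuclear: we compose the approximating maps $B\to M_n(\C)\to B$ with the inclusion $B\hookrightarrow A$. So $\Sub_{\mathrm{nuc}}(A)\subseteq\Sub_{\mathrm{rnuc}}(A)$. Since the latter set is Wijsman closed by Proposition~\ref{prop:relative-nuclear-closed}, it also contains the Wijsman closure of $\Sub_{\mathrm{nuc}}(A)$.

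The only point needing care is the choice of faithful representation. We use the single fixed $\sigma$ of $A$, restricted to each subalgebra, and rely on the fact that exactness is witnessed by any faithful representation. Granting that fact, which was recalled before the statement, the argument is routine.
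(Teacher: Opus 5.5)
Your proof is correct and follows the paper's argument. The paper writes $\Sub_{\mathrm{ex}}(A)$ as the preimage of $\Sub_{\mathrm{rnuc}}(\mathcal B(H))$ under the lower-continuous inclusion $\Sub(A)\to\Sub(\mathcal B(H))$ and invokes Proposition~\ref{prop:relative-nuclear-closed}; you instead inline that proposition's argument with $\sigma$ in place of the inclusion, relying on the same fact that exactness is witnessed by every faithful representation, and you handle both inclusions exactly as the paper does.
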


\begin{proof}
View $A\subset\mathcal B(H)$. The inclusion
$j:\Sub(A)\to\Sub(\mathcal B(H))$ is lower-continuous by Proposition~\ref{rem:subspace-closed}. The characterisation of exactness above gives
$$
\Sub_{\mathrm{ex}}(A)
=j^{-1}\bigl(\Sub_{\mathrm{rnuc}}(\mathcal B(H))\bigr).
$$
Proposition~\ref{prop:relative-nuclear-closed}, applied to $\mathcal B(H)$,
therefore proves lower closedness.

\medskip

\noindent We now prove the last statement. For the first inclusion, observe that $$\Sub_{\mathrm{nuc}}(A)\subseteq\Sub_{\mathrm{rnuc}}(A)$$ and use Proposition~\ref{prop:relative-nuclear-closed}. For the second inclusion, note that if $B\hookrightarrow A$ is
nuclear, then its composition with $A\hookrightarrow\mathcal B(H)$ is a faithful
nuclear representation.
\end{proof}

\noindent The preceding results give the following characterisation of exactness.

\begin{corollary}\label{cor:exact-nuclear-wijsman-approximation}
Let $B$ be a separable unital $C^*$-algebra. The following are equivalent:
\begin{enumerate}
\item $B$ is exact.
\item There exist a separable unital $C^*$-algebra $A\supseteq B$ and
nuclear $B_n\in\Sub(A)$ such that $B_n\to B$ in the Wijsman topology.
\item There exist separable unital $C^*$-algebras $B\subseteq D\subseteq A$
with $D\cong\mathcal O_2$ (the Cuntz algebra) and self-adjoint unitaries
$u_n\in A\cap B'$ such that $u_nDu_n\to B$ in the Wijsman topology.
\end{enumerate}
All inclusions are unital; the ambient algebra $A$ may depend on $B$.
\end{corollary}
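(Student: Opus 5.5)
We will prove the cycle of implications $(1)\Rightarrow(3)\Rightarrow(2)\Rightarrow(1)$, relying mainly on Proposition~\ref{rem:universal-conjugate-approximation} and Proposition~\ref{prop:exact-subalgebras-closed}.

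\emph{$(1)\Rightarrow(3)$.} Since $B$ is separable, unital and exact, Kirchberg's embedding theorem provides a unital embedding $B\hookrightarrow\mathcal O_2$. We set $D\cong\mathcal O_2$ to be the image, so that $B\subseteq D$ is a unital inclusion of separable unital $C^*$-algebras. Proposition~\ref{rem:universal-conjugate-approximation} then applies directly. It produces a separable unital $A\supseteq D$ and self-adjoint unitaries $u_n\in A\cap B'$ with $u_nDu_n\to B$ in the Wijsman topology of $\Sub(A)$. The only external input in this step is Kirchberg's theorem, and it is exactly where exactness is used. We must check that the embedding can be taken unital. This holds: Kirchberg--Phillips gives a unital embedding for unital separable exact algebras, since $\mathcal O_2$ absorbs any projection issue ($p\mathcal O_2p\cong\mathcal O_2$ for every nonzero projection $p$).

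\emph{$(3)\Rightarrow(2)$.} Take $B_n:=u_nDu_n\in\Sub(A)$. Conjugation by the self-adjoint unitary $u_n$ is a $*$-automorphism of $A$, so $B_n\cong D\cong\mathcal O_2$. Since $\mathcal O_2$ is nuclear, each $B_n$ is nuclear, and $B_n\to B$ by hypothesis.

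\emph{$(2)\Rightarrow(1)$.} Here $B$ lies in the Wijsman closure of $\Sub_{\mathrm{nuc}}(A)$. By Proposition~\ref{prop:exact-subalgebras-closed}, this closure is contained in $\Sub_{\mathrm{rnuc}}(A)\subseteq\Sub_{\mathrm{ex}}(A)$, so $B$ is exact.

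The main obstacle is $(1)\Rightarrow(3)$, specifically obtaining a \emph{unital} embedding into $\mathcal O_2$ as a black-box citation. Everything else reduces to results already established in the paper.
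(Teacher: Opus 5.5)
Your proposal is correct and follows the paper's proof step for step. For $(1)\Rightarrow(3)$ you use Kirchberg's unital embedding into $\mathcal O_2$ together with Proposition~\ref{rem:universal-conjugate-approximation}; for $(3)\Rightarrow(2)$, nuclearity of $\mathcal O_2\cong u_nDu_n$; and for $(2)\Rightarrow(1)$, Proposition~\ref{prop:exact-subalgebras-closed}. One small wording fix: in $(1)\Rightarrow(3)$ it is $B$ that should be identified with its image inside $D:=\mathcal O_2$, not $D$ that is "the image".
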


\begin{proof}
If $B$ is exact, Kirchberg's embedding Theorem provides a unital embedding
$B\hookrightarrow\mathcal O_2$
\cite[Theorem~2.8]{KirchbergPhillips1997Embedding}.
Applying Proposition~\ref{rem:universal-conjugate-approximation} to this
inclusion gives $(1)\Rightarrow(3)$. Since $\mathcal O_2$ is nuclear, $(3)\Rightarrow(2)$. Finally, nuclear algebras are exact, so
Proposition~\ref{prop:exact-subalgebras-closed} gives $(2)\Rightarrow(1)$.
\end{proof}

\section{\texorpdfstring{Random subalgebras and $C^*$-simplicity}{Random subalgebras and C*-simplicity}}\label{sec:random-rigidity}

\subsection{Generalities} For a topological space $X$, let $\Prob(X)$ denote the set of Borel probabilities, endowed with the weak topology, i.e. the coarsest topology for which every map $\nu\mapsto\int_X f\,d\nu$, $f\in C_b(X)$, is continuous. If $X$ is Polish, every $\nu\in\Prob(X)$ is inner regular with respect
to compact sets and outer regular \cite[Theorems~17.10 and~17.11]{Kechris1995}.

\medskip

\noindent For $\nu\in\Prob(X)$, its \textbf{topological support} is the closed set
$$\supp(\nu):=\{x\in X:\nu(U)>0\text{ for every open neighbourhood }U\text{ of }x\}.$$
If $X$ is second countable, in particular if it is Polish, then
$\nu(\supp(\nu))=1$ and $\supp(\nu)$ is the smallest closed subset of $X$
of full $\nu$-measure.

\medskip

\noindent A \textbf{random subgroup (RS)} of a countable
group $\Gamma$ is an element of $\Prob(\Sub(\Gamma))$, where
$\Sub(\Gamma)$ carries the Chabauty topology.

\medskip

\noindent Following \cite{AHO25}, a \textbf{random subalgebra (RA)} of a finite von Neumann algebra $M$ is a Borel probability measure on $\Sub(M)$, with the Effros--Mar\'echal topology. We define a $C^*$-version of this notion.

\begin{definition}
Let $A$ be a unital $C^*$-algebra. A \textbf{random subalgebra (RA)} of $A$ is a Borel probability measure $\nu$ on $\Sub(A)$ endowed with the Wijsman topology.
\end{definition}

\noindent Let $\Gamma$ be a countable group. A unital $C^*$-algebra $A$ equipped with an action
$\alpha:\Gamma\to\Aut(A)$ is called a \textbf{$\Gamma$-$C^*$-algebra}. Note that the action of $\Gamma$ on $\Sub(A)$ defined by $gB:=\alpha_g(B)$, for $g\in\Gamma$, $B\in\Sub(A)$, is by homeomorphisms, by Lemma \ref{lem:inverse-image}.

\medskip

\noindent A finite von Neumann algebra $(M,\tau)$ equipped with a trace preserving action
$\alpha:\Gamma\to\Aut(M)$ is called a \textbf{$\Gamma$-von Neumann algebra}. As in the $C^*$-case, the action of $\Gamma$ on $\Sub(M)$ defined by $gN:=\alpha_g(N)$, for $g\in\Gamma$, $N\in\Sub(M)$, is by homeomorphisms \cite[Section 4]{AmrutamFimaJiang2026}.

\medskip

\noindent For $\nu$ a RA of $D$, where $D$ is either a $\Gamma$-$C^*$-algebra or a $\Gamma$-von Neumann algebra, define $g\nu\in\Prob(\Sub(D))$ by
$g\nu(E):=\nu(\alpha_g^{-1}(E))$ for every Borel set $E\subseteq\Sub(D)$. The action $\alpha:\Gamma\to\operatorname{Aut}(D)$ is called \textbf{inner} if there exists a unitary representation $u:\Gamma\to\mathcal U(D)$ such that $\alpha_g(a)=u_gau_g^*$ for all $g\in\Gamma$ and $a\in D$. Its induced action on $\Sub(D)$ is $gD=u_gDu_g^*$.

\begin{definition}\label{def:random-algebra-invariance}
Let $D$ be either a $\Gamma$-$C^*$-algebra or a $\Gamma$-von Neumann algebra and $\nu\in\Prob(\Sub(D))$.
\begin{itemize}
\item $\nu$ is called \textbf{invariant} ($\IRA$) if $g\nu=\nu$ for
all $g\in\Gamma$.
\item $\nu$ is called \textbf{quasi-invariant} ($\qIRA$) if
$g\nu\sim\nu$ for all $g\in\Gamma$.
\item For $\mu\in\Prob(\Gamma)$,
$\nu$ is called \textbf{$\mu$-stationary} ($\mu\SRA$) if
$$
\nu=\mu*\nu:=\sum_{g\in\Gamma}\mu(g)g\nu.
$$
\item A qIRA $\nu$ is called \textbf{ergodic} if, for every Borel set
$E\subseteq\Sub(A)$,
$$
\bigl[\,\nu(g^{-1}E\triangle E)=0\text{ for all }g\in\Gamma\,\bigr]
\Longrightarrow\nu(E)\in\{0,1\}.
$$
\end{itemize}
\end{definition}

\noindent Note that ergodicity may be tested only on $\Gamma$-invariant Borel sets:
if $E$ satisfies the bracketed condition, then
$E_0:=\bigcap_{g\in\Gamma}g^{-1}E$ is invariant and
$\nu(E\triangle E_0)=0$, since $\Gamma$ is countable.

\medskip

\noindent For a state $\varphi$ on $D$, set $g\varphi=\varphi\circ\alpha_{g^{-1}}$ and
call $\varphi$ \textbf{$\mu$-stationary} if $\varphi=\sum_g\mu(g)g\varphi$.

\medskip

\noindent A probability $\mu\in\Prob(\Gamma)$ is called \textbf{non-degenerate} if $\supp(\mu)$ generates
$\Gamma$ as a semigroup.

\begin{remark}
For any non-degenerate $\mu\in\Prob(\Gamma)$ one has:
$$
\IRA\subseteq\mu\SRA\subseteq\qIRA.
$$
The first inclusion is clear. For the second, if $\nu$ is a $\mu$-SRA then $\nu=\mu^{*n}*\nu$ for every $n$. Fix $g\in\Gamma$. Since $\mu$ is non-degenerate, there exists $n$ with $\mu^{*n}(g)>0$. Then, for any Borel set $E\subseteq\Sub(A)$,  $\nu(E)=\mu^{*n}*\nu(E)=\sum_{h\in\Gamma}\mu^{*n}(h)h\nu(E)\geq\mu^{*n}(g)g\nu(E)$ which implies that $g\nu\ll\nu$. Applying this to $g^{-1}$ and translating gives $\nu\ll g\nu$.\end{remark}

\noindent If $\nu\in\Prob(X)$ and $f:X\longrightarrow Y$ is measurable, its
push-forward is defined by $f_*\nu(E)=\nu(f^{-1}(E))$.

\begin{example}\label{ex-RS}
Let $\pi$ be a unitary representation of $\Gamma$. We will always view $C^*_\pi(\Gamma)$ as an inner $\Gamma$-$C^*$-algebra with the action $gB:=\pi(g)B\pi(g)^*$, for $g\in\Gamma$ and $B\in\Sub(C^*_\pi(\Gamma))$. The map $\iota_\pi:\Sub(\Gamma)\to\Sub(C_\pi^*(\Gamma))$ is of Baire class~1
by Theorem~\ref{thm:reduced-group-map}. Hence every random subgroup
$\nu$ of $\Gamma$ defines the random subalgebras $(\iota_\pi)_*\nu$ of
$C^*_\pi(\Gamma)$. Since $\iota_\pi$ is also $\Gamma$-equivariant,  if $\nu$ is an $\IRS$ then $(\iota_\pi)_*\nu$ is an $\IRA$ and the same statement holds for $\mu$-SRA, $\qIRA$.
\end{example}

\begin{example}
Following \cite{AHO25}, we will always view $\Sub({\rm L}(\Gamma))$ as an inner $\Gamma$-von Neumann algebra with the action $gN:=\lambda_gN\lambda_g^*$, for all $g\in\Gamma$, $N\in\Sub({\rm L}(\Gamma))$. It shown in \cite[Proposition 4.1]{AHO25} (see also \cite[proposition 5.12]{FimaLeMaitreMukherjeePatri2026} for an easier proof) that the map $\Sub(\Gamma)\to\Sub({\rm L}(\Gamma))$ is continuous. It is obviously $\Gamma$-equivariant. Hence, as in Example \ref{ex-RS}, every random subgroup (resp. IRS) defines a random subalgebra (resp. IRA) of ${\rm L}(\Gamma)$.
\end{example}

\noindent The von Neumann assertion in the next Proposition extends the normal-closure construction of \cite[Lemma~4.4]{AHO25}. We provide a simpler proof which unifies both the von Neumann and $C^*$ setting.

\begin{proposition}\label{prop:alg sup unique}
Let $D$ be a separable unital $C^*$-algebra or a finite von Neumann algebra with separable predual and a faithful normal tracial state, and let $\nu\in\Prob(\Sub(D))$.
There is a unique minimal $N\in\Sub(D)$ such that $\nu(\Sub(N))=1$.
If $D$ is a $\Gamma$-$C^*$-algebra or a $\Gamma$-von Neumann algebra and $\nu$ is quasi-invariant, then $N$ is $\Gamma$-invariant.
\end{proposition}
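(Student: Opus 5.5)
Let $\mathcal S=\{N\in\Sub(D):\nu(\Sub(N))=1\}$. It is non-empty since $D\in\mathcal S$. The plan is to show that $\mathcal S$ is closed under countable intersections. Then we produce a countable family whose intersection is the minimum.

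First, $\Sub(N)$ is Borel in $\Sub(D)$. In the $C^*$-case this is Proposition~\ref{rem:subspace-closed}(1), which says $\Sub(N)$ is Wijsman-closed. In the von Neumann case, write $\Sub(N)=\{P:E_P(x)=E_P(E_N(x))\ \forall x\}$, or equivalently $\{P:\|E_P(x)\|_2\le\|E_N(x)\|_2\}$ over a countable $\|\cdot\|_2$-dense set. This is closed for the Effros--Mar\'echal topology. Moreover, $P\subseteq N_1\cap N_2$ if and only if $P\subseteq N_1$ and $P\subseteq N_2$, so $\Sub(\bigcap_k N_k)=\bigcap_k\Sub(N_k)$. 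Hence any countable intersection of elements of $\mathcal S$ lies in $\mathcal S$.

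To get minimality from only countably many intersections, use separability. For each point $x$ of a countable dense set $\{x_j\}$ (norm-dense in $D$, or $\|\cdot\|_2$-dense in $D$ in the von Neumann case), consider $\inf_{N\in\mathcal S}d(x_j,N)$, with the $\|\cdot\|_2$-distance in the von Neumann case. For each $j$, choose a sequence $N_{j,m}\in\mathcal S$ with $d(x_j,N_{j,m})$ tending to this infimum. Set $N_0=\bigcap_{j,m}N_{j,m}\in\mathcal S$.

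For any $N\in\mathcal S$, we have $N\cap N_0\in\mathcal S$ and $N\cap N_0\subseteq N_0$. By construction, $d(x_j,N\cap N_0)=d(x_j,N_0)$ for all $j$, since both are at least the infimum and $d(x_j,N_0)$ is at most the infimum. Because distance functions are $1$-Lipschitz and $\{x_j\}$ is dense, $d(\cdot,N\cap N_0)=d(\cdot,N_0)$ everywhere. Closed sets are determined by their distance functions, so $N_0=N\cap N_0\subseteq N$. In the von Neumann case, the $\|\cdot\|_2$-distance to $P$ determines $E_P$ via orthogonal projection in ${\rm L}^2$, which in turn determines $P$. Thus $N_0$ is the minimum, and it is unique: two minima contain each other.

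For the last assertion, let $g\in\Gamma$. Since $\alpha_g$ is an automorphism, $\alpha_g(\Sub(N))=\Sub(\alpha_g(N))$, so
\[
\nu(\Sub(\alpha_g N))=(g^{-1}\nu)(\Sub(N)).
\]
By quasi-invariance, $g^{-1}\nu\sim\nu$, and since $\nu(\Sub(N))=1$ this measure of $\Sub(N)$ is also $1$. Therefore $\alpha_g(N)\in\mathcal S$, so $N\subseteq\alpha_g(N)$ for all $g$. Applying this to $g^{-1}$ gives equality.

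The main delicate point is the measurability of $\Sub(N)$ in the von Neumann setting. The minimality argument via distance functions is routine once that is in place.
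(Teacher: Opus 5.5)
\textbf{The minimality step fails as written.} Your overall strategy is sound: $\mathcal S$ is closed under countable intersections, and you then exhaust using a dense sequence. Your treatment of the Borel (indeed closed) nature of $\Sub(N)$ and of the quasi-invariance step is also correct. The problem is that the extremal problem points the wrong way.

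Since $D\in\mathcal S$ and $d(x_j,D)=0$, one has $\inf_{N\in\mathcal S}d(x_j,N)=0$ for every $j$. So you may take $N_{j,m}=D$ and obtain $N_0=D$, which is not minimal in general. Concretely, the claim that ``$d(x_j,N_0)$ is at most the infimum'' is backwards: $N_0\subseteq N_{j,m}$ gives $d(x_j,N_0)\ge d(x_j,N_{j,m})$, since smaller sets have larger distances. Even granting that claim, your inequalities only yield $d(x_j,N\cap N_0)\ge d(x_j,N_0)$. That is the trivial direction, already coming from $N\cap N_0\subseteq N_0$; what you need is the reverse inequality.

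\textbf{The repair is to maximise instead.} Put $s_j=\sup_{N\in\mathcal S}d(x_j,N)$, which is finite, and choose $N_{j,m}\in\mathcal S$ with $d(x_j,N_{j,m})\to s_j$. Then $N_0=\bigcap_{j,m}N_{j,m}\in\mathcal S$ satisfies $d(x_j,N_0)\ge d(x_j,N_{j,m})$ for all $m$, hence $d(x_j,N_0)\ge s_j$. The reverse inequality holds because $N_0\in\mathcal S$. For any $N\in\mathcal S$ one has $N\cap N_0\in\mathcal S$, so $d(x_j,N\cap N_0)\le s_j=d(x_j,N_0)$. Now the two distance functions agree on the dense set, and the rest of your argument goes through: the Lipschitz extension, and recovering the algebra from its distance function (in the von Neumann case via $d_2(x,P)^2=\|x\|_2^2-\|E_P(x)\|_2^2$).

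\textbf{Comparison with the paper.} With this correction, your proof is a valid alternative to the paper's. The paper instead defines $N$ as the algebra generated by $\bigcup_{B\in\supp(\nu)}B$. It uses that $\nu(\supp(\nu))=1$ and that $\Sub(C)$ is closed, so $\nu(\Sub(C))=1$ forces $\supp(\nu)\subseteq\Sub(C)$. The paper's route is shorter and gives an explicit description of $\langle\nu\rangle$. Your exhaustion argument needs only that each $\Sub(N)$ is Borel, not closed.
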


\begin{proof}
The space $\Sub(D)$ is Polish, and $\Sub(C)$ is closed in $\Sub(D)$ for every $C\in\Sub(D)$.
In the $C^*$-case, this follows from Proposition~\ref{rem:subspace-closed}.
In the von Neumann case, if $P_i\subseteq C$ and $P_i\to P$, then
$E_CE_{P_i}=E_{P_i}$; pointwise $2$-norm convergence gives $E_CE_P=E_P$, hence $P\subseteq C$.
Let $N$ be the $C^*$-algebra, respectively von Neumann algebra, generated by
$\bigcup_{B\in\supp(\nu)}B$. Since $\nu(\supp(\nu))=1$, one has $\nu(\Sub(N))=1$.
If $\nu(\Sub(C))=1$, then since $\Sub(C)$ is closed one has $\supp(\nu)\subseteq\Sub(C)$, and $N\subseteq C$.
Finally, quasi-invariance gives $g\supp(\nu)=\supp(\nu)$ and therefore $gN=N$.
\end{proof}

\begin{definition}\label{def:alg supp}
For $D$ and $\nu$ as in Proposition~\ref{prop:alg sup unique}, the unique minimal $N\in\Sub(D)$ such that $\nu(\Sub(N))=1$ is the \textbf{algebraic support} of $\nu$, denoted $\langle\nu\rangle$.
The measure $\nu$ is \textbf{spanning} if $\langle\nu\rangle=D$.
\end{definition}

\subsection{Nuclear and amenable random subalgebras}

Let $A$ be a unital separable $C^*$-algebra and recall that the set $\Sub_{\mathrm{nuc}}(A)$ is Borel by
Proposition~\ref{thm:nuclear-subalgebras-G-delta}. A random subalgebra $\nu$ is called
\textbf{nuclear} if $\nu(\Sub_{\mathrm{nuc}}(A))=1$.

\begin{remark}\label{RmkNucRA}
The following holds.
\begin{enumerate}
\item Every nuclear RA $\nu$ of $A$ satisfies
$\supp(\nu)\subseteq\Sub_{\mathrm{rnuc}}(A)\subseteq\Sub_{\mathrm{ex}}(A)$.
Indeed, $\Sub_{\mathrm{rnuc}}(A)$ is closed by
Proposition~\ref{prop:relative-nuclear-closed}, and
$$\Sub_{\mathrm{nuc}}(A)\subseteq \Sub_{\mathrm{rnuc}}(A)\subseteq\Sub_{\mathrm{ex}}(A)$$
by
Proposition~\ref{prop:exact-subalgebras-closed}.
Thus $\nu(\Sub_{\mathrm{rnuc}}(A))=1$ so $\supp(\nu)\subseteq \Sub_{\mathrm{rnuc}}(A)$. However, $\supp(\nu)$ need not consist of nuclear algebras: for $B_n\to B$ as in Example~\ref{ex:nuclear-nonclosed}, the measure $\sum_{n\geq1}2^{-n}\delta_{B_n}$ is nuclear and $B\in\supp(\nu)$ is not nuclear.

\item\label{item:spanning-nuclear-qira} For every action $\alpha:\Gamma\to\operatorname{Aut}(A)$ of a countable group on a separable unital $C^*$-algebra, there exists a spanning qIRA concentrated on abelian subalgebras (hence a nuclear spanning qIRA).
\end{enumerate}
Indeed, choose self-adjoint generators $(a_n)_{n\geq1}$ of $A$ and $m\in\Prob(\Gamma)$ with $m(g)>0$ for all $g\in\Gamma$, and set $\nu:=\sum_{n\geq1}\sum_{g\in\Gamma}2^{-n}m(g)\delta_{\alpha_g(C^*(1,a_n))}$. The set
$$
\mathcal D=\{\alpha_g(C^*(1,a_n)):n\geq1,\ g\in\Gamma\}
$$
is countable and $\Gamma$-invariant, and every point of $\mathcal D$ has positive $\nu$-mass. Hence, for every Borel set $E\subseteq\Sub(A)$ and every $h\in\Gamma$,
$$
\nu(E)=0\ \Leftrightarrow\ E\cap\mathcal D=\emptyset
\ \Leftrightarrow\ h\nu(E)=0.
$$
Thus $h\nu\sim\nu$. Each atom is abelian, so $\nu$ is nuclear. If $C\in\Sub(A)$ satisfies $\nu(\Sub(C))=1$, then $C^*(1,a_n)\subseteq C$ for every $n$, since $m(e)>0$. Consequently $C=A$, and $\langle\nu\rangle=A$.

\begin{enumerate}\setcounter{enumi}{2}
\item The existence assertion in~\eqref{item:spanning-nuclear-qira} fails with qIRA replaced by $\mu$-SRA, even for symmetric full-support $\mu$. Remark~\ref{rem:spanning-stationary-obstruction} gives a class of pairs $(A,\mu)$ for which no spanning nuclear $\mu$-SRA exists.
\end{enumerate}
\end{remark}

\noindent A von Neumann algebra $N$ is called \textbf{amenable} if every embedding $N\subseteq\mathcal B(H)$ admits a conditional expectation $E:\mathcal B(H)\to N$. For a discrete group $\Gamma$, amenability of $\Gamma$ is equivalent to amenability of ${\rm L}(\Gamma)$. We shall use the following well-known characterizations of nuclearity.

\begin{theorem}[{\cite[Theorems~IV.3.1.5 and~IV.2.2.13]{Blackadar2006}}]\label{ThmNuc}
Let $A$ be a unital $C^*$-algebra. The following are equivalent.
\begin{enumerate}
    \item $A$ is nuclear.
    \item $A^{**}$ is amenable.
    \item For any unital $*$-homomorphism $\pi:A\to\mathcal B(H)$, $\pi(A)''$ is amenable.
\end{enumerate}
\end{theorem}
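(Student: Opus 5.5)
The statement is a cited standard result, so I would assemble it from the classical equivalences rather than prove it from scratch. The plan is to prove $(1)\Rightarrow(3)\Rightarrow(2)\Rightarrow(1)$. The first two arrows are soft; the real content sits in Connes' and Haagerup's theorems.

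For $(1)\Rightarrow(3)$, I fix a unital $*$-homomorphism $\pi:A\to\mathcal B(H)$ and set $M=\pi(A)''$. Nuclearity of $A$ gives CCP maps $\phi_i:A\to M_{n_i}(\C)$ and $\psi_i:M_{n_i}(\C)\to A$ with $\psi_i\phi_i\to\id_A$ in point-norm.
\begin{itemize}
\item I extend each $\phi_i\circ\pi^{-1}$ to $\mathcal B(H)$. This is legitimate when $\pi$ is injective; in general I first pass to $\pi(A)$, which is again nuclear as a quotient of a nuclear algebra (a nontrivial but standard fact). The extension uses Arveson's theorem and gives CCP maps $\Phi_i:\mathcal B(H)\to M_{n_i}(\C)$.
\item The maps $\pi\psi_i\Phi_i:\mathcal B(H)\to M$ are CCP.
\item I take a point-weak$^*$ cluster point $E:\mathcal B(H)\to M$ of this net.
\item $E$ restricts to the identity on $\pi(A)$, is CCP, and lands in $M$. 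This shows that $M$ is \emph{injective} in the weaker sense that there is a CCP map $\mathcal B(H)\to M$ fixing $\pi(A)$.
\item Upgrading this to a genuine conditional expectation onto $M$ is the step I would route through Connes' theorem: the approximation data exhibit $M$ as semidiscrete (normal factorisation through matrix algebras in point-weak$^*$), and semidiscrete is equivalent to injective, that is, amenable in the paper's sense.
\end{itemize}

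For $(3)\Rightarrow(2)$, I take $\pi$ to be the universal representation. Then $\pi(A)''\cong A^{**}$, and the implication is immediate.

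For $(2)\Rightarrow(1)$, which I expect to be the main obstacle, I would proceed as follows.
\begin{itemize}
\item Injectivity of $A^{**}$ gives semidiscreteness by Connes' theorem. This is exactly the nontrivial direction, the one that needs Connes' deep work.
\item So $\id_{A^{**}}$ is a point-weak$^*$ limit of normal CCP maps factoring through matrix algebras.
\item Restricting to $A$ and composing gives CCP factorisations $A\to M_n(\C)\to A^{**}$ that converge to the inclusion point-weak$^*$, hence weakly in $A$ by normality of the restrictions.
\item Next I push the maps $M_n(\C)\to A^{**}$ into $A$. By weak$^*$ density of the unit ball of $M_n(A)$ in that of $M_n(A^{**})$, which is Kaplansky/Goldstine applied to Choi matrices, I approximate them by CCP maps into $A$, using the Choi-matrix parametrisation from Lemma~\ref{lem:selection-dense-ccp}. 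The resulting approximations converge to $\id_A$ in the point-weak topology.
\item Finally, a Hahn--Banach convex-combination argument converts point-weak convergence into point-norm convergence, since convex combinations of factorable maps still factor (through direct sums).
\end{itemize}

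This yields nuclearity. The delicate points are the matrix-level density in the penultimate step and the reliance on Connes' injective-implies-semidiscrete theorem. Since the paper cites Blackadar, I would ultimately invoke these as black boxes, with Choi--Effros for the completely positive approximation property.
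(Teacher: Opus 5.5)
Your arrows $(3)\Rightarrow(2)$ and $(2)\Rightarrow(1)$ follow the standard Choi--Effros/Connes route and are fine; the paper itself gives no proof and only cites Blackadar. The gap is in $(1)\Rightarrow(3)$, at the ``upgrade'' step.

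What you actually construct is a CCP map $E:\mathcal B(H)\to M$ with $E|_{\pi(A)}=\id$. This is only a weak expectation, and it is strictly weaker than injectivity of $M$. For example, $\mathcal B(\ell^2)$ is injective, so in its universal representation $\pi_u$ there is a conditional expectation of $\mathcal B(H_u)$ onto $\pi_u(\mathcal B(\ell^2))$. Yet $\pi_u(\mathcal B(\ell^2))''=\mathcal B(\ell^2)^{**}$ is not injective, since $\mathcal B(\ell^2)$ is not nuclear.

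Your remedy, that ``the approximation data exhibit $M$ as semidiscrete'', is unjustified for two reasons. The maps $\pi\psi_i\Phi_i|_M$ are not normal, since Arveson extensions of $\phi_i\circ\pi^{-1}$ have no reason to be. They also converge to the identity only on $\pi(A)$, not point-weak$^*$ on $M$.

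Even in the most favourable case ($\pi=\pi_u$, using the normal extensions $\phi_i^{**}$), point-norm convergence on $A$ does not give point-weak$^*$ convergence on $A^{**}$. Take $A=C[0,1]$ and $T_nf=\sum_{k=0}^n f(k/n)g_k$, where $(g_k)$ is the tent partition of unity at the nodes $k/n$. These maps factor through $\C^{n+1}$ and tend to $\id_A$ point-norm. But for irrational $x_0$ one has $\langle T_n^{**}(1_{\{x_0\}}),\delta_{x_0}\rangle=\sum_k g_k(x_0)1_{\{x_0\}}(k/n)=0$ for every $n$.

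Two further remarks. Semidiscrete $\Rightarrow$ injective is the elementary direction, so Connes' theorem is not what is missing here. And reducing to faithful $\pi$ via ``quotients of nuclear algebras are nuclear'' is circular, because that fact is normally deduced from $(1)\Leftrightarrow(2)$.

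The missing idea is to use nuclearity through the commutant.
\begin{enumerate}
\item The CPAP gives $A\otimes_{\max}C=A\otimes_{\min}C$ for every $C^*$-algebra $C$. This is the easy direction: $\psi\phi\otimes\id$ is min-to-max contractive because $M_n(\C)\otimes_{\max}C=M_n(\C)\otimes_{\min}C$.
\item Since $\pi(A)$ and $\pi(A)'$ commute, $a\otimes y\mapsto\pi(a)y$ is then a $*$-homomorphism $A\otimes_{\min}\pi(A)'\to\mathcal B(H)$. No faithfulness of $\pi$ is needed.
\item Extend it by Arveson to a UCP map $\Theta$ on $A\otimes_{\min}\mathcal B(H)$. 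Then $A\otimes1$ lies in the multiplicative domain of $\Theta$.
\item Consequently $T\mapsto\Theta(1\otimes T)$ takes values in $\pi(A)'$ and fixes $\pi(A)'$. It is therefore a conditional expectation of $\mathcal B(H)$ onto $\pi(A)'$.
\item Hence $\pi(A)'$ is injective, and so is $\pi(A)''$, since injectivity passes to commutants (e.g.\ via $M'=JMJ$).
\end{enumerate}

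A minor point in $(2)\Rightarrow(1)$: the normalisation $\max\{1,\|\sum_k(T^*T)_{kk}\|\}$ in the map $Q$ of Lemma~\ref{lem:selection-dense-ccp} is not weak$^*$-continuous. Instead, parametrise CCP maps $M_n(\C)\to A^{**}$ as $x\mapsto R^*(x\otimes 1)R$, with $R$ in the unit ball of the column space $M_{n^2,1}(A^{**})$. Then apply Kaplansky's strong$^*$ density to $R$; weak$^*$ density of unit balls alone does not give convergence of $R^*(x\otimes1)R$.
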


\medskip

\noindent Let $(M,\tau)$ be a finite von Neumann algebra with separable predual and let
$\Sub_{\mathrm{am}}(M)$ denote its amenable unital von Neumann subalgebras.
This set is closed in $\Sub(M)$ by \cite[Proposition~4.7]{AHO25}
(see also \cite[Proposition~5.2]{FimaLeMaitreMukherjeePatri2026} for a more general statement and an easier proof).

\medskip

\noindent In contrast with Remark \ref{RmkNucRA}, we obtain the following.

\begin{proposition}\label{prop:amenable-tracial-closures}
Let $A$ be a separable unital $C^*$-algebra with a tracial state $\tau$,
and let $\pi_\tau$ be its GNS representation. Then
$$
\Sub_{{\rm am}}^\tau(A)
:=\{B\in\Sub(A):\pi_\tau(B)''\text{ is amenable}\}
$$
is closed for the lower topology, hence for the Wijsman topology.
Consequently, for every $\nu\in\Prob(\Sub(A))$, the following are equivalent.
\begin{enumerate}
\item $\nu(\Sub_{{\rm am}}^\tau(A))=1$.
\item $\supp(\nu)\subseteq\Sub_{{\rm am}}^\tau(A)$.
\end{enumerate}
In particular, every nuclear random subalgebra $\nu$ satisfies these two conditions.
\end{proposition}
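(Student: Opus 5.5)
The plan is to factor the map $B\mapsto\pi_\tau(B)''$ through the space $\Sub(M)$, where $M=\pi_\tau(A)''$, and then pull back the closed set of amenable subalgebras. Write $\Phi:\Sub(A)\to\Sub(M)$ for this map, as in Proposition~\ref{PropBC}, so that $\Sub^\tau_{\rm am}(A)=\Phi^{-1}(\Sub_{\rm am}(M))$. The map $\Phi$ is not continuous in general (Remark~\ref{rem:bicommutant-discontinuous}), so we cannot simply pull back. We will instead use semicontinuity in the right direction together with the fact that amenability passes to subalgebras which are contained in a limit of amenable ones.

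The key observation is Proposition~\ref{PropBC}(1). For every $x\in M$, the function $B\mapsto\|E_{\Phi(B)}(x)\|_2$ is lower semicontinuous for the lower topology. Take a net $B_i\to B$ in the lower topology with each $\pi_\tau(B_i)''$ amenable, and set $N_i=\Phi(B_i)$. Since $\Sub(M)$ is Polish (compact metrizable would suffice, but we do not need it), we first pass to a subnet with $N_i\to P$ in the Effros--Mar\'echal topology. This requires compactness of $\Sub(M)$. Here one can use that $\Sub(M)$ is compact for finite $M$ (e.g.\ \cite{fima2024michael} or \cite{FimaLeMaitreMukherjeePatri2026}). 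If that is unavailable, we will instead argue directly with $\|E_{N_i}(x)\|_2$ via ultralimits. Closedness of $\Sub_{\rm am}(M)$ \cite[Proposition~4.7]{AHO25} gives that $P$ is amenable. Lower semicontinuity then gives, for each $b\in B$,
\[
\|\pi_\tau(b)\|_2=\|E_{\Phi(B)}(\pi_\tau(b))\|_2\leq\liminf_i\|E_{N_i}(\pi_\tau(b))\|_2=\|E_P(\pi_\tau(b))\|_2 .
\]
Hence $E_P(\pi_\tau(b))=\pi_\tau(b)$, so $\pi_\tau(B)\subseteq P$ and $\Phi(B)\subseteq P$.

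It remains to pass amenability from $P$ down to $\Phi(B)$. A von Neumann subalgebra of a finite amenable von Neumann algebra is amenable: compose the trace-preserving conditional expectation $E_{\Phi(B)}:P\to\Phi(B)$ with a conditional expectation $\mathcal B(H)\to P$. This shows that $\Sub^\tau_{\rm am}(A)$ is closed for the lower topology, and hence for the finer Wijsman topology.

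For the equivalence, (2)$\Rightarrow$(1) follows from $\nu(\supp\nu)=1$. For (1)$\Rightarrow$(2), $\supp(\nu)$ is the smallest closed set of full measure, and $\Sub^\tau_{\rm am}(A)$ is closed; measurability is automatic since the set is closed. For the last claim, a nuclear $B$ has $\pi_\tau(B)''$ amenable by Theorem~\ref{ThmNuc}(3), applied to the restriction of $\pi_\tau$ to $B$. So $\Sub_{\rm nuc}(A)\subseteq\Sub^\tau_{\rm am}(A)$, and a nuclear $\nu$ satisfies (1).

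The main obstacle is the compactness step used to extract a convergent subnet of $(N_i)$. If it is not available, I would replace it by working with the family $\{N_i\}$ directly. Fix a free ultrafilter on the net and show that the set of $x\in M$ with $\lim\|x-E_{N_i}(x)\|_2=0$ is a von Neumann subalgebra containing $\pi_\tau(B)$ on which an ultraproduct-type argument yields amenability (e.g.\ via hypertraces). I would try the compactness route first.
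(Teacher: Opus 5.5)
\paragraph{The compactness step fails.} Your main route needs $\Sub(M)$ to be compact in the Effros--Mar\'echal topology. This is false as soon as $M$ is diffuse, which is exactly the case of interest ($M=\mathrm L(\Gamma)$ with $\Gamma$ infinite).

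A concrete counterexample lives in $M=\mathrm L^\infty([0,1])$:
\begin{itemize}
\item Let $A=[0,\tfrac12)$, let $R_n$ be the set where the $n$-th binary digit equals $1$, and for $n\geq2$ let $E_n=A\cup(A^c\cap R_n)$ and $N_n=\C1_{E_n}+\C1$.
\item With $u_n=1_{E_n}-\tfrac34$ one has $E_{N_n}(f)=\tau(f)1+\tfrac{16}{3}\langle f,u_n\rangle u_n$.
\item Since $1_{R_n}\to\tfrac12$ weakly, $u_n\to w:=\tfrac14(1_A-1_{A^c})$ weakly. Hence $E_{N_n}\to T:=\tau(\cdot)1+\tfrac{16}{3}\langle\cdot,w\rangle w$ in the weak operator topology.
\item But $Tw=\tfrac13 w$, so $T$ is not idempotent.
\item If some subnet converged to $P\in\Sub(M)$, then $E_{N_{n_k}}\to E_P$ strongly on $\mathrm L^2(M)$. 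Hence $E_P=T$, which is impossible.
\end{itemize}
So the subnet $N_i\to P$ you rely on need not exist, and the closedness of $\Sub_{\rm am}(M)$ cannot be brought to bear. The rest of your argument is fine: the lower semicontinuity step giving $\Phi(B)\subseteq P$, the descent of amenability to subalgebras via $E_{\Phi(B)}$, the measure-theoretic equivalence, and the nuclear case.

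\paragraph{The fallback leaves the key step unproved.} Your fallback points in the right direction, but the actual content, namely the amenability of the limit algebra, is left as ``an ultraproduct-type argument''. The paper shows that neither an ultrafilter nor the auxiliary algebra $Q$ is needed.

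Put $N=\pi_\tau(B)''$ and $N_i=\pi_\tau(B_i)''$.
\begin{enumerate}
\item For $b\in B$ one has $\|\pi_\tau(b)-E_{N_i}(\pi_\tau(b))\|_2\leq d_A(b,B_i)\to0$. Density and $2$-norm contractivity then give $E_{N_i}(x)\to x$ in $2$-norm for every $x\in N$.
\item Hence $T_i:=E_N\circ E_{N_i}|_N\to\id_N$ point-ultraweakly.
\item Each $N_i$ is amenable, hence semidiscrete. So $\id_{N_i}$ is a point-ultraweak limit of normal UCP maps factoring through matrix algebras. Composing with the normal maps $E_{N_i}|_N$ and $E_N$, the same holds for $T_i$.
\item Passing to the point-ultraweak closure, the same holds for $\id_N$. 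Thus $N$ is semidiscrete, hence amenable.
\end{enumerate}
This is the step your proposal would have to supply.
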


\begin{proof}
Let $(B_i)$ be a net in $\Sub_{{\rm am}}^\tau(A)$ converging to $B$
for the lower topology. Put $M=\pi_\tau(A)''$, $N_i=\pi_\tau(B_i)''$
and $N=\pi_\tau(B)''$. Equip $M$ with the faithful normal trace induced
by $\tau$. For each unital von Neumann subalgebra $P\subseteq M$, let
$E_P:M\to P$ be the trace-preserving conditional expectation. For every $b\in B$,
$$
\|\pi_\tau(b)-E_{N_i}(\pi_\tau(b))\|_{2,\tau}
=d_{\Vert\cdot\Vert_{2,\tau}}(\pi_\tau(b),\mathrm{L}^2(N_i))\leq d_A(b,B_i)\longrightarrow0.
$$
Kaplansky density and $2$-norm contractivity give
$E_{N_i}(x)\to x$ in $2$-norm for every $x\in N$. By amenability of $N_i$, each $T_i=E_N\circ E_{N_i}|_N$ is a point-ultraweak limit of normal UCP maps factoring through matrix algebras. Since $T_i\to\id_N$ point-ultraweakly, the same holds for $\id_N$. Thus $N$ is amenable, by the argument of \cite[Definition~5.1 and proof of Proposition~5.2]{FimaLeMaitreMukherjeePatri2026}. This proves closedness. Let us now show that $(1)\Leftrightarrow(2)$. If $\nu(Sub_{{\rm am}}^\tau(A))=1$ then, since $\Sub_{{\rm am}}^\tau(A)$ is closed we have $\supp(\nu)\subseteq \Sub_{{\rm am}}^\tau(A)$. Conversely, if $\supp(\nu)\subseteq\Sub_{{\rm am}}^\tau(A)$, then
$1=\nu(\supp(\nu))\leq\nu(\Sub_{{\rm am}}^\tau(A))\leq1$ so $\nu(\Sub_{{\rm am}}^\tau(A))=1$. Finally, $\Sub_{\mathrm{nuc}}(A)\subseteq\Sub_{{\rm am}}^\tau(A)$
by Theorem~\ref{ThmNuc}.
\end{proof}

\noindent Let $\Gamma$ be countable discrete group. The canonical trace on its von Neumann algebra ${\rm L}(\Gamma)$ and its reduced $C^*$-algebra $C^*_r(\Gamma)$ is denoted $\tau_0$ in both cases.

\medskip

\noindent A random subalgebra $\nu$ of ${\rm L}(\Gamma)$ is called \textbf{amenable} if $\nu(\Sub_{\mathrm{am}}({\rm L}(\Gamma)))=1$. In this case we may view $\nu\in\Prob(\Sub_{\mathrm{am}}({\rm L}(\Gamma)))$. 

\medskip

\noindent For $N\in\Sub(\mathrm L(\Gamma))$, the function
$\phi_N(g)=\|E_N(\lambda_g)\|_2^2$ and the continuous equivariant map
$N\mapsto\phi_N$ into normalized positive-definite functions are considered in
\cite[Definition~4.1, Remark~4.2 and Proposition~4.3]{AmrutamJiang2026Confined}.
The next Lemma proves that integration of $N\mapsto\phi_N$ against an amenable random subalgebra of ${\rm L}(\Gamma)$ gives a state of $C_r^*(\Gamma)$.

\begin{lemma}\label{lem:stationary-state-from-nuclear-sra}
For all $\nu\in\Prob(\Sub_{\mathrm{am}}({\rm L}(\Gamma)))$ there exists a unique state
$$\psi_\nu\in S(C_r^*(\Gamma))\quad\text{such that}\quad\psi_\nu(\lambda_g)=\int\|E_N(\lambda_g)\|_2^2\,d\nu(N)\quad\text{ for all }g\in\Gamma.$$
Moreover, the map $\Prob(\Sub_{\mathrm{am}}({\rm L}(\Gamma))))\to S(C_r^*(\Gamma))$, $\nu\longmapsto\psi_\nu$ is affine, weak to weak* continuous, and $\Gamma$-equivariant. Viewing $\nu\in\Prob(\Sub({\rm L}(\Gamma))$, the following holds.
\begin{enumerate}
\item If $\nu$ is invariant then $\psi_\nu$ is a trace.
\item If $\nu$ is $\mu$-stationary then $\psi_\nu$ is a $\mu$-stationary state.
\end{enumerate}
\end{lemma}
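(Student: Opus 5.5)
The plan is to build $\psi_\nu$ by integrating, over $N$, a state of $C_r^*(\Gamma)$ attached to each amenable $N$ whose value on $\lambda_g$ is $\phi_N(g)=\|E_N(\lambda_g)\|_2^2$, and then to show this integral is again a state of $C_r^*(\Gamma)$. Uniqueness is immediate, since a bounded functional on $C_r^*(\Gamma)$ is determined by its values on the dense span of $\{\lambda_g\}$.

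\textbf{Pointwise step.} For $N\in\Sub_{\mathrm{am}}({\rm L}(\Gamma))$, the function $\phi_N$ is positive definite: it is the matrix coefficient $g\mapsto\langle E_N(\lambda_g)\hat1,E_N(\lambda_g)\hat1\rangle$, i.e.\ $\tau_0(E_N(\lambda_g)^*E_N(\lambda_g))$, which comes from the ${\rm L}(\Gamma)$-bimodule ${\rm L}^2\langle {\rm L}(\Gamma),e_N\rangle$ (Jones basic construction). Concretely,
$$\phi_N(g)=\langle \lambda_g e_N\lambda_g^* \hat{}\,,\,e_N\rangle_{\mathrm{Tr}},$$
so $\phi_N$ is the coefficient of the representation $g\mapsto \Ad(\lambda_g)$ on ${\rm L}^2(\langle {\rm L}(\Gamma),e_N\rangle,\mathrm{Tr})\cong {\rm L}^2(M)\otimes_N {\rm L}^2(M)$ at the vector $e_N$. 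The key point is that amenability of $N$ makes this bimodule weakly contained in the coarse bimodule ${\rm L}^2(M)\bar\otimes {\rm L}^2(M)$. The conjugation representation on the coarse bimodule is $\lambda\otimes\bar\rho$, which is a multiple of $\lambda$ by Fell absorption. Hence the corresponding representation is weakly contained in $\lambda_\Gamma$, and $\phi_N$ extends to a state $\psi_N$ of $C_r^*(\Gamma)$.

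\textbf{Integration.} Define $\psi_\nu(x)=\int\psi_N(x)\,d\nu(N)$. To make this legitimate I need measurability of $N\mapsto\psi_N(x)$. This holds for $x\in\C[\Gamma]$ because $N\mapsto\|E_N(\lambda_g)\|_2$ is continuous for the Effros--Maréchal topology. It then extends to all $x$ by norm limits, using the uniform bound $|\psi_N(x)|\le\|x\|$. The resulting $\psi_\nu$ is a state, and it is clearly affine in $\nu$. For weak-to-weak$^*$ continuity, note that $\nu\mapsto\psi_\nu(\lambda_g)$ is integration of the bounded continuous function $N\mapsto\phi_N(g)$. On bounded sets of $C_r^*(\Gamma)^*$, weak$^*$ convergence may be tested on the dense set $\C[\Gamma]$.

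\textbf{Equivariance and the two consequences.} Equivariance follows from
$$\|E_{\lambda_hN\lambda_h^*}(\lambda_g)\|_2=\|E_N(\lambda_{h^{-1}gh})\|_2,$$
which gives $\psi_{h\nu}=\psi_\nu\circ\Ad(\lambda_h^*)=h\psi_\nu$. Consequently, invariance of $\nu$ gives $\psi_\nu(\lambda_g)=\psi_\nu(\lambda_{hgh^{-1}})$, so $\psi_\nu$ is a trace. For stationarity, affinity together with continuity extends affinity to countable convex combinations, giving $\psi_{\mu*\nu}=\sum_g\mu(g)\,g\psi_\nu$; hence a $\mu$-stationary $\nu$ yields a $\mu$-stationary state.

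The main obstacle is the pointwise step, namely that amenability of $N$ forces $\phi_N$ to be weakly contained in $\lambda$. If the bimodule argument is awkward, I would instead use hyperfiniteness: approximate $E_N$ in point-$2$-norm by $E_{Q_k}$ with $Q_k\subseteq N$ finite-dimensional. Then $\phi_{Q_k}(g)=\sum_{i,j}|\tau_0(f_{ij}^*\lambda_g)|^2$ (up to normalisation over matrix units $f_{ij}$ of $Q_k$), which is a finite sum of coefficients of $\lambda\otimes\bar\lambda\prec\lambda$. Pointwise limits of such states are states of $C_r^*(\Gamma)$.
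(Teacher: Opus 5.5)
Your proposal is correct and follows essentially the same route as the paper: the paper realises $\phi_N$ as the coefficient at $1\otimes_N 1$ (your $e_N$ in the basic construction) of the representation $g\mapsto\lambda_g\cdot\lambda_g^*$ on ${\rm L}^2(M)\otimes_N{\rm L}^2(M)$. It then gets weak containment in the coarse bimodule from amenability of $N$ via induction of correspondences, uses Fell absorption for $\lambda\otimes\rho$ to conclude $\sigma_N\prec\lambda_\Gamma$, and integrates and checks equivariance exactly as you do.
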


\begin{proof}
The uniqueness of $\psi_\nu$ is clear. Let us show the existence. During the proof we write $M={\rm L}(\Gamma)$ and fix $N\in\Sub_{\mathrm{am}}(M)$. On
$\mathcal H_N=\mathrm{L}^2(M)\otimes_N \mathrm{L}^2(M)$, put $\xi_N=1\otimes_N1$ and $\sigma_N(g)(x\otimes_N y)=\lambda_gx\otimes_N y\lambda_g^*$. One has $\|\xi_N\|=1$ and $\langle\sigma_N(g)\xi_N,\xi_N\rangle
=\tau_0(E_N(\lambda_g)\lambda_g^*)=\|E_N(\lambda_g)\|_2^2.$ By \cite[Theorem~13.4.1, p.~231]{ADP} and induction of correspondences
\cite[Exercise~13.15(ii), p.~239]{ADP},
$$
{}_N\mathrm{L}^2(N)_N\prec{}_N(\mathrm{L}^2(N)\otimes \mathrm{L}^2(N))_N
\quad\text{and}\quad
{}_M\mathcal H_N{}_M\prec{}_M(\mathrm{L}^2(M)\otimes \mathrm{L}^2(M))_M.
$$
Identify $\mathrm L^2(M)$ with $\ell^2(\Gamma)$, and use the convention $\rho_g\delta_t=\delta_{tg^{-1}}$.
The representation of $\Gamma$ on $\mathrm{L}^2(M)\otimes\mathrm{L}^2(M)$
given by $g\mapsto(\xi\otimes\eta\mapsto\lambda_g\xi\otimes\eta\lambda_g^*)$
is $\lambda\otimes\rho$. By Fell's Lemma, $\lambda\ot\rho$ is unitarily equivalent to $\lambda\ot 1$. Weak containment of correspondences therefore gives, for every finitely supported family $(c_g)_{g\in\Gamma}$,
$$
\left\|\sum_g c_g\sigma_N(g)\right\|
\leq\left\|\sum_g c_g(\lambda_g\otimes\rho_g)\right\|
=\left\|\sum_g c_g\lambda_g\ot 1\right\|=\left\|\sum_g c_g\lambda_g\right\|.
$$
Hence $\sigma_N\prec\lambda_\Gamma$, and its coefficient defines a state $\psi_N$ on $C_r^*(\Gamma)$. Each map $N\mapsto\psi_N(\lambda_g)$ is continuous by definition of the Effros--Mar\'echal topology, and bounded.
By norm density of $\mathbb C[\Gamma]$ and $\|\psi_N\|=1$,  $N\mapsto\psi_N(x)$, $x\in C_r^*(\Gamma)$ is continuous for all $x\in C_r^*(\Gamma)$. Hence we can define $\psi_\nu(x):=\int\psi_N(x)d\nu(N)$ for all $x\in C^*_r(\Gamma)$ and it is easy to check that $\psi_\nu$ is a state on $C^*_r(\Gamma)$ and that the map $\nu\mapsto\psi_\nu$ is weak to weak* continuous and affine. For $s\in\Gamma$, uniqueness of the trace-preserving conditional expectation gives $E_{sN}=\Ad(\lambda_s)\circ E_N\circ\Ad(\lambda_{s^{-1}})$. Since $\Ad(\lambda_s)$ preserves the $2$-norm we have $
\psi_{sN}(\lambda_g)=\psi_N(\lambda_{s^{-1}gs})$, for $s,g\in\Gamma$, and by integrating this proves the $\Gamma$-equivariance: $\psi_{s\nu}=\psi_\nu\circ\Ad(\lambda_{s^{-1}})$.
Equivariance gives the stationarity assertion $(2)$. For invariant $\nu$ in $(1)$, conjugation invariance of $\psi_\nu$ implies $\psi_\nu(\lambda_g\lambda_h)=\psi_\nu(\lambda_h\lambda_g)$; linearity and norm density imply that $\psi_\nu$ is tracial.\end{proof}

\begin{remark}\label{rem:cstar-state-map}
For every RA $\nu$ of $C_r^*(\Gamma)$ satisfying
$\nu(\Sub_{{\rm am}}^{\tau_0}(C_r^*(\Gamma)))=1$, define
$\psi_\nu:=\psi_{\varphi_*\nu}$, where $\varphi(B)=B''$.
By Corollary~\ref{cor:bi-commutant-map-borel} and
Lemma~\ref{lem:stationary-state-from-nuclear-sra}, this map is affine and
$\Gamma$-equivariant; it sends invariant laws to traces and
$\mu$-stationary laws to $\mu$-stationary states.
By Theorem~\ref{ThmNuc}, it is defined for every nuclear RA.

\medskip

\noindent This map need not be continuous for the weak topology on $\Prob(\Sub(C^*_r(\Gamma)))$, where $\Sub(C^*_r(\Gamma))$ has the Wijsman topology. Indeed, take $\Gamma=\Z$ and identify
$C_r^*(\Z)=C(\mathbb T)$ and $\lambda_1(z)=z$.
By Remark~\ref{rem:bicommutant-discontinuous}, there are nuclear
$B_n\in\Sub(C(\mathbb T))$ such that $B_n\to\C1$ and
$B_n''={\rm L}^\infty(\mathbb T)$, with respect to Haar measure. Hence
$$
\delta_{B_n}\longrightarrow\delta_{\C1},\qquad
\psi_{\delta_{B_n}}(\lambda_1)=1,\qquad
\psi_{\delta_{\C1}}(\lambda_1)=0.
$$
\end{remark}

\subsection{Concentration on the amenable radical}
The following is a $C^*$-version of \cite[Theorem~B]{AHO25}. We give a proof using the state construction of Lemma~\ref{lem:stationary-state-from-nuclear-sra}. Recall that the \textbf{amenable radical} $\Rad(\Gamma)$ is the largest amenable normal subgroup of $\Gamma$.

\begin{theorem}\label{thm: nuclear IRA}
Let $\nu$ be an IRA of $C_r^*(\Gamma)$. If
$\nu(\Sub_{{\rm am}}^{\tau_0}(C_r^*(\Gamma)))=1$ then
$$
\langle\nu\rangle\subseteq C_r^*(\Rad(\Gamma)).
$$
\end{theorem}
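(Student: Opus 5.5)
The plan is to pass from the invariant random subalgebra $\nu$ to a trace on $C_r^*(\Gamma)$ via Remark~\ref{rem:cstar-state-map}, and then use the known structure of traces on reduced group $C^*$-algebras. Since $\nu(\Sub_{{\rm am}}^{\tau_0}(C_r^*(\Gamma)))=1$, the push-forward $\varphi_*\nu$ under $\varphi(B)=B''$ is an amenable random subalgebra of ${\rm L}(\Gamma)$. It is invariant, because $\varphi$ is $\Gamma$-equivariant by Corollary~\ref{cor:bi-commutant-map-borel}. Lemma~\ref{lem:stationary-state-from-nuclear-sra}(1) then shows that $\psi_\nu=\psi_{\varphi_*\nu}$ is a tracial state on $C_r^*(\Gamma)$.

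Next I will invoke the theorem of Breuillard--Kalantar--Kennedy--Ozawa: every tracial state $\tau$ on $C_r^*(\Gamma)$ satisfies $\tau(\lambda_g)=0$ for all $g\notin\Rad(\Gamma)$. This is the external input on which the argument rests; I take it as known rather than reproving it, and the rest is bookkeeping. Applying it to $\psi_\nu$ gives
$$
\int\|E_{B''}(\lambda_g)\|_2^2\,d\nu(B)=0\qquad\text{for all } g\in\Gamma\setminus\Rad(\Gamma).
$$
Since $\Gamma$ is countable, there is a $\nu$-conull Borel set of $B$ such that $E_{B''}(\lambda_g)=0$ for every $g\notin\Rad(\Gamma)$ simultaneously. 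Measurability is handled by the Baire class~1 property of $\varphi$ together with the continuity of $N\mapsto\|E_N(\lambda_g)\|_2$.

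Fix such a $B$ and take $x\in B\subseteq B''$. Using $x=E_{B''}(x)$ and the bimodule property of the trace-preserving expectation, for $g\notin\Rad(\Gamma)$ one gets
$$
\tau_0(x\lambda_g^*)=\tau_0(E_{B''}(x)\lambda_g^*)=\tau_0(xE_{B''}(\lambda_g)^*)=0.
$$
So the Fourier coefficients of $x$ vanish off $\Rad(\Gamma)$. With the canonical conditional expectation $E_{\Rad(\Gamma)}:C_r^*(\Gamma)\to C_r^*(\Rad(\Gamma))$, $\lambda_g\mapsto\mathbf 1_{\Rad(\Gamma)}(g)\lambda_g$, this means $x-E_{\Rad(\Gamma)}(x)$ has all Fourier coefficients zero. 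By faithfulness of $\tau_0$ it is therefore $0$, so $x\in C_r^*(\Rad(\Gamma))$.

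Hence $\nu(\Sub(C_r^*(\Rad(\Gamma))))=1$. The minimality in Proposition~\ref{prop:alg sup unique}, with $\Sub(C)$ closed by Proposition~\ref{rem:subspace-closed}, gives $\langle\nu\rangle\subseteq C_r^*(\Rad(\Gamma))$.
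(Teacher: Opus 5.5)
Your proposal is correct and follows the paper's proof essentially step for step. You push $\nu$ forward under $B\mapsto B''$ to get the trace $\psi_\nu$ from Lemma~\ref{lem:stationary-state-from-nuclear-sra}, apply the Breuillard--Kalantar--Kennedy--Ozawa theorem to get $E_{B''}(\lambda_g)=0$ almost surely for $g\notin\Rad(\Gamma)$, deduce that the Fourier coefficients vanish off $\Rad(\Gamma)$, and conclude with Proposition~\ref{prop:alg sup unique}; the only cosmetic difference is that you finish with the $C^*$-level expectation onto $C_r^*(\Rad(\Gamma))$ and faithfulness of $\tau_0$, where the paper uses $E_{\mathrm L(\Rad(\Gamma))}$ restricted to $C_r^*(\Gamma)$.
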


\begin{proof}
The map $\varphi:B\mapsto B''$ is Borel and $\Gamma$-equivariant by
Corollary~\ref{cor:bi-commutant-map-borel}, so $\varphi_*\nu$ is an amenable IRA
on $\mathrm L(\Gamma)$. Lemma~\ref{lem:stationary-state-from-nuclear-sra}
therefore gives a tracial state $\psi_\nu:=\psi_{\varphi_*\nu}$ on $C_r^*(\Gamma)$.
By \cite[Theorem~4.1]{breuillard2017c}, every tracial state on $C_r^*(\Gamma)$
vanishes on $\lambda_g$ for $g\notin\Rad(\Gamma)$. Hence, for every such $g$,
$$
0=\psi_\nu(\lambda_g)
=\int_{\Sub(C_r^*(\Gamma))}\|E_{B''}(\lambda_g)\|_2^2\,d\nu(B).
$$
Hence, for all $g\notin\Rad(\Gamma)$, one has $E_{B''}(\lambda_g)=0$, for $\nu$-almost every $B$. Since $\Gamma$ is countable we deduce that, for $\nu$-almost every $B$,
$E_{B''}(\lambda_g)=0$ for all $g\notin\Rad(\Gamma)$. For every such $B$ and every $b\in B$ one has:
$$
\langle b,\lambda_g\rangle_2=\tau_0(\lambda_g^*b)=\tau_0(E_{B''}(\lambda_g)^*b)=0\quad\text{for all}\quad g\notin\Rad(\Gamma).$$
Thus the Fourier coefficients of $b$ vanish outside $\Rad(\Gamma)$,
so $E_{\mathrm L(\Rad(\Gamma))}(b)=b$.
Since this expectation maps $C_r^*(\Gamma)$ onto $C_r^*(\Rad(\Gamma))$,
we obtain $B\subseteq C_r^*(\Rad(\Gamma))$ for $\nu$-almost every $B$.
Proposition~\ref{prop:alg sup unique} then gives
$\langle\nu\rangle\subseteq C_r^*(\Rad(\Gamma))$.
\end{proof}

\begin{remark}
Note that Theorem \ref{thm: nuclear IRA} applies to any nuclear IRA of $C^*_r(\Gamma)$. Moreover, the proof of Theorem \ref{thm: nuclear IRA} could also be deduced from \cite[Theorem~B]{AHO25} by using the map $B\mapsto B''$. Conversely, the argument using Lemma~\ref{lem:stationary-state-from-nuclear-sra}
and \cite[Theorem~4.1]{breuillard2017c}, applied directly to an amenable IRA on
$\mathrm L(\Gamma)$, gives an alternative proof of \cite[Theorem~B]{AHO25}.
\end{remark}

\begin{corollary}\label{thm:maximal-nuclear-invariant}
The algebra $C_r^*(\Rad(\Gamma))$ is the maximal nuclear $\Gamma$-invariant unital
$C^*$-subalgebra of $C_r^*(\Gamma)$.
\end{corollary}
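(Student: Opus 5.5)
The statement has two parts, and we prove them separately: first that $C_r^*(\Rad(\Gamma))$ is itself a nuclear $\Gamma$-invariant unital $C^*$-subalgebra, and second that every nuclear $\Gamma$-invariant unital $C^*$-subalgebra $B\subseteq C_r^*(\Gamma)$ is contained in it. Maximality and uniqueness then follow at once, since any other maximal one would be contained in $C_r^*(\Rad(\Gamma))$ and hence equal to it.

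For the first part, $\Rad(\Gamma)$ is amenable, so $C_r^*(\Rad(\Gamma))$ is nuclear; this uses the equivalence recalled at the start of Section~\ref{sec: nuc}. Here $C_r^*(\Rad(\Gamma))$ means the image $\iota_r(\Rad(\Gamma))\subseteq C_r^*(\Gamma)$. This image is canonically isomorphic to the reduced $C^*$-algebra of the subgroup, because $\lambda_\Gamma|_{\Rad(\Gamma)}$ is a multiple of $\lambda_{\Rad(\Gamma)}$. Invariance comes from normality of $\Rad(\Gamma)$: for $g\in\Gamma$ and $h\in\Rad(\Gamma)$ we have $\lambda_g\lambda_h\lambda_g^*=\lambda_{ghg^{-1}}$, so conjugation by $\lambda_g$ maps the generators into $C_r^*(\Rad(\Gamma))$. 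Applying the same argument to $g^{-1}$ gives equality.

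For the second part, let $B$ be nuclear and $\Gamma$-invariant. We consider the Dirac measure $\nu=\delta_B\in\Prob(\Sub(C_r^*(\Gamma)))$. Since $\lambda_gB\lambda_g^*=B$ for every $g\in\Gamma$, we get $g\nu=\delta_{gB}=\nu$, so $\nu$ is an IRA. Because $B$ is nuclear, Theorem~\ref{ThmNuc} shows that $B''=\pi_{\tau_0}(B)''$ is amenable, so $\nu(\Sub_{\rm am}^{\tau_0}(C_r^*(\Gamma)))=1$; Proposition~\ref{prop:amenable-tracial-closures} records the same fact for nuclear random subalgebras. Theorem~\ref{thm: nuclear IRA} then gives $\langle\nu\rangle\subseteq C_r^*(\Rad(\Gamma))$. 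By Proposition~\ref{prop:alg sup unique}, $\langle\nu\rangle$ is the smallest $N$ with $\delta_B(\Sub(N))=1$, that is, with $B\subseteq N$, so $\langle\nu\rangle=B$. Hence $B\subseteq C_r^*(\Rad(\Gamma))$.

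There is no real obstacle, since Theorem~\ref{thm: nuclear IRA} does all the work. The only point needing care is that the GNS representation of $\tau_0$ is the regular representation on $\ell^2(\Gamma)$. This is what makes the hypothesis of Theorem~\ref{thm: nuclear IRA} match the bicommutant $B''$ taken inside ${\rm L}(\Gamma)$.
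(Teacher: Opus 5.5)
Your proof is correct and follows the paper's argument: you check that $C_r^*(\Rad(\Gamma))$ is nuclear and $\Gamma$-invariant, then apply Theorem~\ref{thm: nuclear IRA} to the Dirac IRA $\delta_B$. The details you add are exactly the steps the paper leaves implicit, namely that $\delta_B(\Sub_{\rm am}^{\tau_0}(C_r^*(\Gamma)))=1$ via Theorem~\ref{ThmNuc} and that $\langle\delta_B\rangle=B$.
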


\begin{proof}
The algebra $C_r^*(\Rad(\Gamma))$ is nuclear and invariant. For every nuclear
invariant $B$, apply Theorem~\ref{thm: nuclear IRA} to $\delta_B$.
\end{proof}

\begin{remark}
Corollary \ref{thm:maximal-nuclear-invariant} could also be deduced from \cite[Theorem~A]{AHO25}. Indeed, if $B\subset C^*_r(\Gamma)$ is nuclear and $\Gamma$-invariant, then $B''\subseteq{\rm L}(\Gamma)$ is amenable by Theorem~\ref{ThmNuc} and
$\Gamma$-invariant, so \cite[Theorem~A]{AHO25} gives
$B''\subseteq\mathrm L(\Rad(\Gamma))$. Hence, $B\subseteq C_r^*(\Gamma)\cap\mathrm L(\Rad(\Gamma))=C_r^*(\Rad(\Gamma))$.
\end{remark}

\begin{corollary}\label{cor:nuclear-ira-regular}
The amenable radical of $\Gamma$ is trivial if and only if $\delta_{\C1}$
is the only nuclear $\IRA$ of $C_r^*(\Gamma)$.
\end{corollary}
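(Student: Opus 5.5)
We prove the two implications separately; both reduce quickly to Theorem~\ref{thm: nuclear IRA} and Corollary~\ref{thm:maximal-nuclear-invariant}.

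\emph{Trivial radical implies uniqueness.} Suppose $\Rad(\Gamma)=\{e\}$ and let $\nu$ be a nuclear IRA of $C_r^*(\Gamma)$. By Theorem~\ref{ThmNuc}, every nuclear $B$ has $B''$ amenable, so $\nu(\Sub_{{\rm am}}^{\tau_0}(C_r^*(\Gamma)))=1$. Theorem~\ref{thm: nuclear IRA} then gives
$$\langle\nu\rangle\subseteq C_r^*(\Rad(\Gamma))=C_r^*(\{e\})=\C1.$$
By definition of the algebraic support (Proposition~\ref{prop:alg sup unique}), $\nu(\Sub(\C1))=1$. Since $\Sub(\C1)=\{\C1\}$, we conclude $\nu=\delta_{\C1}$. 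Note also that $\delta_{\C1}$ is indeed a nuclear IRA, because $\C1$ is nuclear and fixed by conjugation.

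\emph{Nontrivial radical gives another nuclear IRA.} Conversely, suppose $\Rad(\Gamma)\neq\{e\}$. Then $C_r^*(\Rad(\Gamma))$ is nuclear, since $\Rad(\Gamma)$ is amenable. It is $\Gamma$-invariant, since $\Rad(\Gamma)$ is normal and $\lambda_g\lambda_h\lambda_g^*=\lambda_{ghg^{-1}}$. So $\delta_{C_r^*(\Rad(\Gamma))}$ is a nuclear IRA. It differs from $\delta_{\C1}$: for $h\in\Rad(\Gamma)\setminus\{e\}$ we have $\lambda_h\notin\C1$, since $\tau_0(\lambda_h)=0$ while $\lambda_h\neq0$. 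Equivalently, injectivity of $\iota_r$ (Theorem~\ref{thm:reduced-group-map}) gives $C_r^*(\Rad(\Gamma))\neq C_r^*(\{e\})=\C1$.

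There is no real obstacle beyond what is already handled by Theorem~\ref{thm: nuclear IRA}. The only points to check are that nuclear random subalgebras satisfy its hypothesis (via Theorem~\ref{ThmNuc}, as noted in Proposition~\ref{prop:amenable-tracial-closures}), and that $\langle\nu\rangle=\C1$ forces $\nu=\delta_{\C1}$.
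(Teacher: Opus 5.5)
Your proposal is correct and follows essentially the same argument as the paper. In one direction you apply Theorem~\ref{thm: nuclear IRA} to get $\langle\nu\rangle\subseteq\C1$, hence $\nu=\delta_{\C1}$; in the other you exhibit $\delta_{C_r^*(\Rad(\Gamma))}$ as a non-trivial nuclear IRA. You also spell out the checks the paper leaves implicit: that nuclear measures satisfy the hypothesis via Theorem~\ref{ThmNuc}, that $\Sub(\C1)=\{\C1\}$, and that $\lambda_h\notin\C1$ for $h\neq e$.
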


\begin{proof}
If $\Rad(\Gamma)=\{e\}$, Theorem~\ref{thm: nuclear IRA} gives
$\langle\nu\rangle\subseteq\C1$ for every nuclear IRA $\nu$, so
$\nu=\delta_{\C1}$. Conversely, if $\Rad(\Gamma)\neq\{e\}$, then
$\delta_{C_r^*(\Rad(\Gamma))}$ is a non-trivial nuclear IRA.
\end{proof}

\begin{remark}\label{rem:arbitrary-representation-counterexample}
The preceding corollary fails for $C^*_\pi(\Gamma)$, where $\pi$ is an arbitrary unitary representation.
Compose a surjection $\mathbb F_2\to S_3$ with the irreducible
two-dimensional representation of $S_3$. Then
$C_\pi^*(\mathbb F_2):=C^*(\pi(\mathbb F_2))=M_2(\mathbb C)$,
so $\delta_{M_2(\mathbb C)}$
is a non-trivial nuclear IRA, although $\mathbb F_2$ is $C^*$-simple
\cite{Powers75} (hence has trivial amenable radical).
\end{remark}

\subsection{\texorpdfstring{Stationary rigidity and $C^*$-simplicity}{Stationary rigidity and C*-simplicity}}

\noindent We will use the following result, due to Hartman and Kalantar, who proved the equivalence of $(1)$ and $(2)$ below
in \cite[Theorem~5.2]{hartman2023stationary}. The strengthened implication
$(1)\Rightarrow(3)$ follows from
\cite[Remark~1 and Lemma~8]{alpeev2025characterization}.

\medskip

\noindent A probability measure $\mu\in\Prob(\Gamma)$ is called \textbf{symmetric}
if $\mu(g)=\mu(g^{-1})$ for every $g\in\Gamma$. It has \textbf{full support} if $\mu(g)>0$ for every $g\in\Gamma$.

\begin{theorem}[{Hartman--Kalantar; Alpeev}]
\label{thm:HK-nondegenerate}
Let $\Gamma$ be a countable discrete group. The following are equivalent.
\begin{enumerate}
\item $\Gamma$ is $C^*$-simple.
\item There exists $\mu\in\Prob(\Gamma)$ such that the canonical trace $\tau_0$
is the unique $\mu$-stationary state on $C_r^*(\Gamma)$.
\item There exists a symmetric probability measure
$\mu\in\Prob(\Gamma)$ with full support such that $\tau_0$
is the unique $\mu$-stationary state on $C_r^*(\Gamma)$.
\end{enumerate}
\end{theorem}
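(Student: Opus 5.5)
The three equivalences are established results. My plan is to take the two deep implications from the cited works and prove only the formal ones directly.

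\emph{The easy implications.} The implication $(3)\Rightarrow(2)$ is trivial. For $(2)\Rightarrow(1)$ I would argue directly. Suppose $\tau_0$ is the unique $\mu$-stationary state, and let $I\subsetneq C_r^*(\Gamma)$ be a proper closed two-sided ideal.
\begin{enumerate}
\item Pick a state $\varphi$ with $\varphi|_I=0$. This exists because $I$ is proper: take a state of $C_r^*(\Gamma)/I$ and compose with the quotient map.
\item The ideal $I$ is invariant under $\Ad(\lambda_g)$, so every $g\varphi$ vanishes on $I$. Hence so do the Ces\`aro averages $\frac1n\sum_{k<n}\mu^{*k}*\varphi$.
\item Any weak$^*$ cluster point of these averages is a $\mu$-stationary state. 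It still vanishes on $I$, since the annihilator of $I$ is weak$^*$ closed.
\item By uniqueness this cluster point is $\tau_0$. Since $\tau_0$ is faithful, $I=0$, so $C_r^*(\Gamma)$ is simple.
\end{enumerate}

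\emph{The hard implication.} The substantive direction is $(1)\Rightarrow(2)$, strengthened to $(1)\Rightarrow(3)$. Here I would follow Hartman--Kalantar \cite[Theorem~5.2]{hartman2023stationary}. The steps are:
\begin{enumerate}
\item By Kalantar--Kennedy, $C^*$-simplicity means the action of $\Gamma$ on the Furstenberg boundary $\partial_F\Gamma$ is topologically free.
\item Choose $\mu$ whose stationary measures capture enough of this boundary behaviour. Concretely, one wants a $\mu$-boundary which is topologically free, or equivalently a random walk along which conjugates $\lambda_{g_n}x\lambda_{g_n}^*$ of non-trivial $\lambda_h$ average out.
\item Given a $\mu$-stationary state $\varphi$ on $C_r^*(\Gamma)$, extend it via Poisson transform/martingale arguments to a stationary state on $C(\partial)\rtimes_r\Gamma$.
\item Use topological freeness to show the extension vanishes on $\lambda_h$ for $h\neq e$, which gives $\varphi=\tau_0$.
\end{enumerate}

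\emph{Upgrading $\mu$.} To get $\mu$ symmetric with full support, I would invoke \cite[Remark~1 and Lemma~8]{alpeev2025characterization}. These show that the set of measures with the unique stationary state property is stable under passing to suitable convex combinations, such as $\frac12\mu+\frac12\check\mu$ mixed with a full-support measure. Alternatively, one can choose $\mu$ from the start as a symmetric full-support measure adapted to a topologically free boundary.

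The main obstacle is the hard implication: producing a single $\mu$ for which every $\mu$-stationary state vanishes on all non-trivial $\lambda_h$. I would not reprove this but cite it, as the paper does.
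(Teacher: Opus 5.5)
Your plan is the paper's own: the paper proves nothing here and simply cites Hartman--Kalantar \cite[Theorem~5.2]{hartman2023stationary} for $(1)\Leftrightarrow(2)$ and Alpeev \cite[Remark~1 and Lemma~8]{alpeev2025characterization} for $(1)\Rightarrow(3)$. Your Ces\`aro-average argument for $(2)\Rightarrow(1)$ is correct, and it makes that direction self-contained for arbitrary $\mu$. The ingredients all check out:
\begin{itemize}
\item closed ideals are $\Ad(\lambda_g)$-invariant;
\item $\psi\mapsto\mu*\psi$ is weak$^*$-continuous on the state space, by truncating the countable sum;
\item the annihilator of $I$ is weak$^*$-closed;
\item $\tau_0$ is faithful.
\end{itemize}

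One correction concerns your description of the upgrade to (3). Uniqueness of the stationary state is not inherited by $\frac12(\mu+\check\mu)$. Nor is it inherited by a mixture $(1-\varepsilon)\mu+\varepsilon\rho$ with an arbitrary full-support $\rho$. In both cases a state stationary for the new measure has no reason to be $\mu$-stationary, so uniqueness for $\mu$ says nothing about it.

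The modifications that provably keep the same stationary states are functions of $\mu$ alone. Examples are $\frac12(\delta_e+\mu)$, or $\sum_{n\geq1}2^{-n}\mu^{*n}$, whose Markov operator is $P(2-P)^{-1}$ and has the same fixed points as $P$. These give full support from a generating $\mu$, but not symmetry.

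A mechanism that does work is genericity, and it is the one the paper makes explicit in Corollary~\ref{rem:generic-nuclear-stationarity}:
\begin{itemize}
\item for $C^*$-simple $\Gamma$, the measures with the unique-stationary-state property form a dense $G_\delta$ in the $\ell^1$-closed space of symmetric probability measures;
\item the full-support measures form another dense $G_\delta$;
\item the Baire category theorem then produces a symmetric full-support $\mu$ in the intersection.
\end{itemize}
Finally, your alternative of choosing $\mu$ symmetric and full-support ``from the start'' restates the claim rather than proving it.
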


\noindent The main result of this subsection is the following characterisation of C*-simplicity.

\begin{theorem}\label{thm:simple}
For a countable group $\Gamma$, the following are equivalent.
\begin{enumerate}
\item $\Gamma$ is $C^*$-simple.
\item For some $\mu\in\Prob(\Gamma)$, $\delta_{\mathbb C1}$ is the unique nuclear $\mu$-SRA of $C_r^*(\Gamma)$.
\item For some symmetric full-support $\mu\in\Prob(\Gamma)$, $\delta_{\mathbb C1}$ is the unique nuclear $\mu$-SRA of $C_r^*(\Gamma)$.
\item For some $\mu\in\Prob(\Gamma)$, $\delta_{\mathbb C1}$ is the unique amenable $\mu$-SRA of $\mathrm L(\Gamma)$.
\item For some symmetric full-support $\mu\in\Prob(\Gamma)$, $\delta_{\mathbb C1}$ is the unique amenable $\mu$-SRA of $\mathrm L(\Gamma)$.
\end{enumerate}
In $(2)$ and $(3)$, nuclearity may be replaced by the condition
$\nu(\Sub_{{\rm am}}^{\tau_0}(C_r^*(\Gamma)))=1$.
Moreover, the uniqueness conclusions in $(2)$--$(5)$, including
this replacement, hold {for every $\mu\in\Prob(\Gamma)$} for which $\tau_0$
is the unique $\mu$-stationary state on $C_r^*(\Gamma)$.
\end{theorem}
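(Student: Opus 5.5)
The plan is to route everything through the state map of Lemma~\ref{lem:stationary-state-from-nuclear-sra} and Remark~\ref{rem:cstar-state-map}, combined with the Hartman--Kalantar--Alpeev Theorem~\ref{thm:HK-nondegenerate}. We first prove the ``moreover'' statement. Fix $\mu$ such that $\tau_0$ is the unique $\mu$-stationary state on $C_r^*(\Gamma)$.

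\emph{The von Neumann case.} Let $\nu$ be an amenable $\mu$-SRA of $\mathrm L(\Gamma)$. By Lemma~\ref{lem:stationary-state-from-nuclear-sra}(2), $\psi_\nu$ is a $\mu$-stationary state, hence $\psi_\nu=\tau_0$. Evaluating at $\lambda_g$ for $g\neq e$ gives
$$0=\int\|E_N(\lambda_g)\|_2^2\,d\nu(N).$$
By countability of $\Gamma$, for $\nu$-almost every $N$ we have $E_N(\lambda_g)=0$ for all $g\neq e$. For such $N$ and any $x\in N$, every Fourier coefficient satisfies
$$\tau_0(\lambda_g^*x)=\tau_0(E_N(\lambda_g)^*x)=0 \quad (g\neq e),$$
so $x\in\C1$. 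Thus $N=\C1$ almost surely and $\nu=\delta_{\C1}$.

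\emph{The $C^*$-case.} Let $\nu$ be a $\mu$-SRA with $\nu(\Sub_{\rm am}^{\tau_0}(C_r^*(\Gamma)))=1$; by Theorem~\ref{ThmNuc} this covers all nuclear ones. Push $\nu$ forward along $\varphi:B\mapsto B''$, which is Borel and equivariant by Corollary~\ref{cor:bi-commutant-map-borel}. The result $\varphi_*\nu$ is an amenable $\mu$-SRA of $\mathrm L(\Gamma)$, since amenability of $\pi_{\tau_0}(B)''=B''$ holds almost surely. By the von Neumann case, $B''=\C1$ almost surely, hence $B=\C1$ almost surely and $\nu=\delta_{\C1}$. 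Existence is trivial: $\delta_{\C1}$ is invariant, nuclear and amenable.

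\emph{The equivalences.} From (1), Theorem~\ref{thm:HK-nondegenerate}(3) provides a symmetric full-support $\mu$ with unique stationary state $\tau_0$. The ``moreover'' part then yields (3) and (5), including the replacement version of (3). The implications (3)$\Rightarrow$(2) and (5)$\Rightarrow$(4) are trivial, and nuclear implies amenable-closure, so the replacement version of (2) implies the nuclear version.

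\emph{The converse directions.} It remains to show (2)$\Rightarrow$(1) and (4)$\Rightarrow$(1); the harder part is producing a nontrivial SRA when $\Gamma$ is not $C^*$-simple. For (2)$\Rightarrow$(1), suppose $\Gamma$ is not $C^*$-simple and fix any $\mu$. The idea is to use amenable stabilisers: there is a $\Gamma$-boundary, or a Furstenberg-type argument, producing a nontrivial amenable subgroup. More concretely, I would use the Kennedy / Breuillard--Kalantar--Kennedy--Ozawa characterisation: $\Gamma$ is not $C^*$-simple if and only if there is a nontrivial amenable URS. In stationary form, there should be a $\mu$-stationary random subgroup $\eta$ concentrated on nontrivial amenable subgroups. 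One gets it by taking a $\mu$-stationary measure on a compact $\Gamma$-space $Y$, the closure of an orbit in $\Sub(\Gamma)$ avoiding $\{e\}$ (the URS), which exists by Markov--Kakutani-type averaging of $\mu^{*n}*\delta_H$ on the compact set $Y$. Then $(\iota_r)_*\eta$ is a nuclear $\mu$-SRA of $C_r^*(\Gamma)$ by Example~\ref{ex-RS}, since $C_r^*(H)$ is nuclear for amenable $H$. It differs from $\delta_{\C1}$ because $\iota_r$ is injective and $\eta(\{\{e\}\})=0$, as $\{e\}\notin Y$. For (4)$\Rightarrow$(1), push $\eta$ into $\mathrm L(\Gamma)$ via $H\mapsto\mathrm L(H)$ instead.

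The main obstacle I expect is justifying that the URS is genuinely amenable and closed and avoids $\{e\}$. This is exactly Kennedy's theorem, which I would cite rather than reprove. An alternative route would be to use (2) of Theorem~\ref{thm:HK-nondegenerate} contrapositively, converting a non-trace stationary state into an SRA, but the URS route seems cleaner.
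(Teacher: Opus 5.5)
Your proof is correct and follows the paper's route. You use the same state-map argument for the ``moreover'' part, with $\psi_\nu=\tau_0$ forcing $E_N(\lambda_g)=0$ almost surely and then pushing forward along $B\mapsto B''$. You invoke Theorem~\ref{thm:HK-nondegenerate} for $(1)\Rightarrow(3),(5)$, and for the converse you push a nontrivial amenable $\mu$-stationary random subgroup through $\iota_r$.

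The differences are minor. The paper cites Hartman--Kalantar's Corollary~5.7 for that random subgroup, while you build it from Kennedy's nontrivial amenable URS by Markov--Kakutani averaging. You also get $(4)\Rightarrow(1)$ directly via $H\mapsto\mathrm L(H)$, whereas the paper deduces $(4)\Rightarrow(2)$ through $B\mapsto B''$.
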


\begin{proof}
Let $\mu\in\Prob(\Gamma)$ be such that $\tau_0$ is the unique $\mu$-stationary state on $C_r^*(\Gamma)$. For every amenable $\mu$-SRA $\nu$ of $\mathrm L(\Gamma)$, Lemma~\ref{lem:stationary-state-from-nuclear-sra} gives $\psi_\nu=\tau_0$, hence
$$
\int\|E_N(\lambda_g)\|_2^2\,d\nu(N)=0\quad\text{for all}\quad g\neq e.
$$
Since $\Gamma$ is countable one has, for $\nu$-almost every $N$, $E_N(\lambda_g)=0$ for all $g\neq e$. Fix such an $N$ and let $x\in N$. Then, $\tau_0(\lambda_g^*x)=\tau_0(E_N(\lambda_g)^*x)=0$ for all $g\neq e$. Hence, $x=\tau_0(x)1$ so $N=\C1$ and $\nu=\delta_{\mathbb C1}$.
If $\eta$ is a $\mu$-SRA of $C_r^*(\Gamma)$ with
$\eta(\Sub_{\mathrm{am}}^{\tau_0}(C_r^*(\Gamma)))=1$, the equivariant Borel map $\varphi:B\mapsto B''$ sends $\eta$ to such an amenable $\mu$-SRA of ${\rm L}(\Gamma)$. Since $B''=\mathbb C1$ implies $B=\mathbb C1$, we deduce from the discussion above that $\eta=\delta_{\mathbb C1}$. This proves the final assertion.

\medskip
\noindent Theorem~\ref{thm:HK-nondegenerate} now gives $(1)\Rightarrow(5)$.
The preceding pushforward argument gives $(5)\Rightarrow(3)$ and $(4)\Rightarrow(2)$, including the stated replacement of nuclearity.
The implications $(5)\Rightarrow(4)$ and $(3)\Rightarrow(2)$ are immediate.

\medskip
\noindent For $(2)\Rightarrow(1)$, suppose that $\Gamma$ is not $C^*$-simple and fix any $\mu\in\Prob(\Gamma)$. By \cite[Corollary~5.7]{hartman2023stationary}, there is an amenable $\mu$-stationary random subgroup $\eta\neq\delta_{\{e\}}$.
The equivariant Borel injection $H\mapsto C_r^*(H)$ of Theorem~\ref{thm:reduced-group-map} sends $\eta$ to a nuclear $\mu$-SRA distinct from $\delta_{\mathbb C1}$. Thus $(2)$ fails, also with the stated replacement.
\end{proof}

\begin{remark}\label{rem:simple-representation-counterexample}
The implication $(1)\Rightarrow(2)$ in Theorem~\ref{thm:simple} fails
if $C_r^*(\Gamma)$ is replaced by $C_\pi^*(\Gamma)$ and $(1)$ is replaced
by simplicity of $C_\pi^*(\Gamma)$, even when $C^*_\pi(\Gamma)$ is infinite dimensional.
Fix $\theta\in\mathbb R\setminus\mathbb Q$ and define unitaries on
$\ell^2(\mathbb Z)$ by
$$
U\delta_n=\delta_{n+1}\quad\text{and}\quad
V\delta_n=e^{2\pi i n\theta}\delta_n\quad\text{for all}\quad n\in\mathbb Z.
$$
Let $\Gamma=\mathbb F_2=\langle a,b\rangle$ and let $\pi$ be the unitary
representation determined by $\pi(a)=U$, $\pi(b)=V$. Since
$VU=e^{2\pi i\theta}UV$, the algebra
$A:=C_\pi^*(\Gamma)=C^*(U,V)$ is an irrational rotation algebra and is
simple. The bilateral shift $U$ has spectrum $\mathbb T$, so
$B:=C^*(U)\simeq C(\mathbb T)$ is nuclear and infinite-dimensional.
Moreover,
$$
\mathbb C1\subsetneq B\subsetneq A,\qquad
UBU^*=B,\qquad VBV^*=B.
$$
Thus $A$ is simple and
infinite-dimensional, but $\delta_B\neq\delta_{\mathbb C1}$ is an
ergodic nuclear IRA of $A$, hence a nuclear $\mu$-SRA for every
$\mu\in\Prob(\Gamma)$. In particular, the modified assertions $(2)$
and $(3)$ fail even when restricted to ergodic measures.
\end{remark}

\begin{remark}\label{rem:spanning-stationary-obstruction}
Let $\Gamma\neq\{e\}$ be countable and $C^*$-simple, and let $\mu\in\Prob(\Gamma)$ be such that $\tau_0$ is the unique $\mu$-stationary state on $C_r^*(\Gamma)$. Let $A$ be a separable unital $\Gamma$-$C^*$-algebra and $q:A\to C_r^*(\Gamma)$ an equivariant unital surjective $*$-homomorphism, with conjugation on the target. Every nuclear $\mu$-SRA $\nu$ of $A$ satisfies
$$
\langle\nu\rangle\subseteq q^{-1}(\mathbb C1)\subsetneq A;
$$
hence $A$ has no spanning nuclear $\mu$-SRA.

\medskip
\noindent By Lemma~\ref{lem:direct-image}, $q_{\mathrm{im}}:B\mapsto q(B)$ is Borel. It is equivariant and preserves nuclearity, since
$q(B)\simeq B/(B\cap\ker q)$. Thus $(q_{\mathrm{im}})_*\nu$ is a nuclear $\mu$-SRA of $C_r^*(\Gamma)$, and Theorem~\ref{thm:simple} gives $q(B)=\mathbb C1$ for $\nu$-almost every $B$. Proposition~\ref{prop:alg sup unique} yields the inclusion above.

\medskip
\noindent This applies to $C_r^*(\Gamma)$ and $C^*(\Gamma)$ with their conjugation actions, in particular for $\Gamma=\mathbb F_2$ \cite{Powers75}. By Corollary~\ref{rem:generic-nuclear-stationarity}, the non-existence conclusion holds for a dense $G_\delta$ set of full-support measures in $\Prob(\Gamma)$, and likewise in its symmetric subspace.

\medskip
\noindent For $\mu=\delta_e$, every RA is stationary, so Remark~\ref{RmkNucRA}\eqref{item:spanning-nuclear-qira} gives a spanning nuclear $\mu$-SRA for every separable unital $\Gamma$-$C^*$-algebra.
\end{remark}

\begin{corollary}\label{rem:generic-nuclear-stationarity}
Let $\Gamma$ be a countable $C^*$-simple group, and let $\mathcal P$ be either
$\Prob(\Gamma)$ or its subspace of symmetric probability measures, with the
topology induced by the $\ell^1$-norm.
There exists a dense $G_\delta$ subset $\mathcal G\subseteq\mathcal P$
consisting of full-support measures such that, for every $\mu\in\mathcal G$:
\begin{enumerate}
\item Every amenable $\mu$-SRA of $\mathrm L(\Gamma)$ equals $\delta_{\mathbb C1}$.
\item Every $\mu$-SRA $\nu$ of $C_r^*(\Gamma)$ satisfying
$\nu(\Sub_{{\rm am}}^{\tau_0}(C_r^*(\Gamma)))=1$ equals $\delta_{\mathbb C1}$.
\end{enumerate}
\end{corollary}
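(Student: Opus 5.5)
The plan is to reduce everything to the final assertion of Theorem~\ref{thm:simple}. That assertion shows that (1) and (2) hold for every $\mu$ for which $\tau_0$ is the unique $\mu$-stationary state on $C_r^*(\Gamma)$. Write $\mathcal U\subseteq\mathcal P$ for the set of such $\mu$, and $\mathcal F\subseteq\mathcal P$ for the full-support measures. It then suffices to show that $\mathcal G:=\mathcal U\cap\mathcal F$ is a dense $G_\delta$ in $\mathcal P$. The set $\mathcal F=\bigcap_{g\in\Gamma}\{\mu:\mu(g)>0\}$ is a countable intersection of $\ell^1$-open sets. It is dense, because $(1-t)\mu+tm$ with $m$ of full support and $t\to0$ approximates $\mu$; taking $m$ symmetric keeps us in the symmetric subspace. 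By Baire's theorem ($\mathcal P$ is a closed subset of $\ell^1(\Gamma)$, hence Polish), it therefore suffices to show that $\mathcal U$ is a dense $G_\delta$.

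For the $G_\delta$ part, let $S$ be the weak$^*$-compact state space of $C_r^*(\Gamma)$ and consider
$$\mathcal K=\{(\mu,\varphi)\in\mathcal P\times S:\ \varphi=\textstyle\sum_g\mu(g)\,g\varphi\}.$$
This set is closed. Indeed, $\|\sum_g\mu(g)g\varphi-\sum_g\mu'(g)g\varphi\|\leq\|\mu-\mu'\|_1$, so the map $(\mu,\varphi)\mapsto\mu*\varphi$ is jointly continuous from the $\ell^1$ topology times the weak$^*$ topology to the weak$^*$ topology. For $g\neq e$ and $k\geq1$, put
$$\mathcal U_{g,k}=\{\mu:\ |\varphi(\lambda_g)|<1/k\text{ for every }\varphi\text{ with }(\mu,\varphi)\in\mathcal K\}.$$
Its complement is the projection to $\mathcal P$ of the closed set $\mathcal K\cap\{|\varphi(\lambda_g)|\geq1/k\}$. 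Since $S$ is compact, this projection is closed, so $\mathcal U_{g,k}$ is open. A state vanishing on every $\lambda_g$ with $g\neq e$ equals $\tau_0$ by density of $\mathbb C[\Gamma]$. Moreover, $\tau_0$ is always $\mu$-stationary because it is conjugation invariant. Hence $\mathcal U=\bigcap_{g\neq e,\,k}\mathcal U_{g,k}$ is a $G_\delta$.

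The main obstacle is density of $\mathcal U$; by the Baire reduction it is enough to show that each open set $\mathcal U_{g,k}$ is dense. Theorem~\ref{thm:HK-nondegenerate}(3) supplies one symmetric full-support $\mu_0\in\mathcal U$, and the task is to perturb an arbitrary $\mu$ slightly into $\mathcal U_{g,k}$. My first attempt is to test measures of the form $\mu_t=(1-t)\mu+t\mu_0$, or convolution-type combinations such as $(1-t)\mu+t\,\mu_0^{*n}$, for small $t$ and suitable $n$. These are symmetric whenever $\mu$ is. I then need to show that every $\mu_t$-stationary state nearly vanishes on $\lambda_g$.

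The idea for this would be to go back to the mechanism in Hartman--Kalantar and \cite[Lemma~8]{alpeev2025characterization}. For a $C^*$-simple group, the Furstenberg--Poisson boundary of a suitable full-support measure is a topologically free boundary. Stationary states are then forced to vanish on $\lambda_g$ by contractions of $g$ along random walk paths, and this contraction property should only require the perturbation to put small but positive weight on a suitable support. Concretely, I would try to rerun Alpeev's construction starting from the given $\mu$, producing measures arbitrarily $\ell^1$-close to $\mu$ that still have the unique stationary state property. If this works, each $\mathcal U_{g,k}$, and even $\mathcal U$ itself, is dense.

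Once density is established, $\mathcal G=\mathcal U\cap\mathcal F$ is a dense $G_\delta$ of full-support measures, in both choices of $\mathcal P$. For every $\mu\in\mathcal G$, the final assertion of Theorem~\ref{thm:simple} gives (1) and (2).
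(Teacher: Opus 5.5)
Your overall architecture matches the paper's. You intersect a set of measures with unique stationary state $\tau_0$ with the open dense sets $\{\mu:\mu(g)>0\}$, apply Baire in the closed subset $\mathcal P\subseteq\ell^1(\Gamma)$, and conclude with the final assertion of Theorem~\ref{thm:simple}. Your compactness argument that $\mathcal U=\bigcap_{g\neq e,k}\mathcal U_{g,k}$ is a $G_\delta$ is also correct.

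\textbf{The gap is the density of $\mathcal U$} (equivalently, of the $\mathcal U_{g,k}$). This is the only nontrivial point of the corollary, and you leave it conditional. Your proposed perturbations come with no mechanism. If $\varphi$ is $\mu_t$-stationary for $\mu_t=(1-t)\mu+t\mu_0$, then $\varphi=(1-t)\mu*\varphi+t\mu_0*\varphi$. This yields $\mu_0*\varphi=\varphi$ only if you already know $\mu*\varphi=\varphi$, as happens for $\mu=\delta_e$. So unique stationarity of $\mu_0$ gives no control on $\varphi(\lambda_g)$. More generally, the single point $\mu_0\in\mathcal U$ supplied by Theorem~\ref{thm:HK-nondegenerate}(3) says nothing about a neighbourhood of an arbitrary $\mu$, since you have only shown that $\mathcal U$ is $G_\delta$, not open. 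Likewise, ``rerunning Alpeev's construction from the given $\mu$'' is a plan rather than an argument.

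The missing input is exactly the genericity result the paper cites. By \cite[Corollary~6 and Lemma~8]{alpeev2025characterization}, for a $C^*$-simple group there is a dense $G_\delta$ subset $\mathcal H\subseteq\mathcal P$, for both choices of $\mathcal P$, such that $\tau_0$ is the unique $\mu$-stationary state for every $\mu\in\mathcal H$. Replace your density discussion by this citation and take $\mathcal G=\mathcal H\cap\bigcap_{g}\{\mu:\mu(g)>0\}$; this gives precisely the paper's proof. Your self-contained $G_\delta$ argument then becomes unnecessary, though it remains valid.
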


\begin{proof}
By \cite[Corollary~6 and Lemma~8]{alpeev2025characterization}, there is a
dense $G_\delta$ subset $\mathcal H\subseteq\mathcal P$ such that $\tau_0$
is the unique $\mu$-stationary state on $C_r^*(\Gamma)$ for every $\mu\in\mathcal H$.
For each $g\in\Gamma$, the set $U_g=\{\mu\in\mathcal P:\mu(g)>0\}$ is open
and dense: if $\rho$ is a fixed symmetric full-support probability, then
$(1-\varepsilon)\mu+\varepsilon\rho\in U_g$ and converges to $\mu$ in
$\ell^1$ as $\varepsilon\downarrow0$.
Since $\mathcal P$ is closed in $\ell^1(\Gamma)$, the Baire category theorem therefore gives a dense $G_\delta$ set
$$
\mathcal G=\mathcal H\cap\bigcap_{g\in\Gamma}U_g.
$$
Every $\mu\in\mathcal G$ has full support, and the final assertion of
Theorem~\ref{thm:simple} gives $(1)$ and $(2)$.
\end{proof}

\noindent The next proposition applies whenever every closed two-sided ideal is $\Gamma$-invariant, in particular for inner actions.

\begin{proposition}\label{prop: simple implies trivial IRAs}
Let $A\neq\mathbb C$ be a separable unital $\Gamma$-$C^*$-algebra whose closed two-sided ideals are all $\Gamma$-invariant.
\begin{enumerate}
\item If $A$ has no characters and its only ergodic IRAs are
$\delta_{\mathbb C1}$ and $\delta_A$, then $A$ is simple.
\item If $\delta_{\mathbb C1}$ is the only nuclear IRA of $A$, then
$A$ has no non-zero nuclear closed two-sided ideals.
\end{enumerate}
\end{proposition}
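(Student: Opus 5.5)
The natural tool is the family of Dirac measures. If $J\subseteq A$ is a closed two-sided ideal, then $\C1+J$ is a unital $C^*$-subalgebra of $A$. It is closed, since it is the preimage of the closed subalgebra $\C1\subseteq A/J$ under the quotient map. By hypothesis $\alpha_g(J)=J$ for every $g\in\Gamma$, so $\C1+J$ is $\Gamma$-invariant. Hence $\delta_{\C1+J}$ is an IRA, and it is ergodic: the orbit of the point is a singleton, so every invariant Borel set has measure $0$ or $1$. Both parts of Proposition~\ref{prop: simple implies trivial IRAs} are obtained by applying the hypothesis to this single measure.

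For $(1)$, let $J$ be a proper non-zero closed ideal. By hypothesis $\delta_{\C1+J}$ equals $\delta_{\C1}$ or $\delta_A$. In the first case $\C1+J=\C1$, so $J\subseteq\C1$. An ideal contained in $\C1$ that contains a non-zero scalar contains $1$, so this forces $J=0$, a contradiction. In the second case $\C1+J=A$, so $A/J$ is spanned by the image of $1$. Thus $A/J\cong\C$, because $J$ is proper. The quotient map $A\to A/J\cong\C$ is then a character of $A$, contradicting the assumption that $A$ has no characters. So $A$ has no proper non-zero closed ideals, i.e.\ $A$ is simple. The hypothesis $A\neq\C$ guarantees that $\delta_{\C1}$ and $\delta_A$ are distinct, but is otherwise not needed here.

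For $(2)$, let $J\neq0$ be a nuclear closed ideal. First suppose $J\neq A$. Then $\C1+J$ is the unitization of $J$: the sum is direct because $1\notin J$. The unitization of a nuclear $C^*$-algebra is nuclear, for instance via the split extension $0\to J\to\C1+J\to\C\to0$, since extensions of nuclear algebras by nuclear algebras are nuclear. Hence $\delta_{\C1+J}$ is a nuclear IRA, so by hypothesis $\C1+J=\C1$, which forces $J=0$ as before, a contradiction. If instead $J=A$, then $A$ is nuclear and $\delta_A$ is a nuclear IRA distinct from $\delta_{\C1}$ because $A\neq\C$, again a contradiction.

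The only non-formal input is the nuclearity of $\C1+J$ in part $(2)$. This is the step needing a citation, namely the permanence of nuclearity under extensions, or directly under unitization. Everything else is bookkeeping with Dirac measures and the closedness of $\C1+J$.
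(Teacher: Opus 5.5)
Your proposal is correct and follows the paper's argument: both test the Dirac measure $\delta_{\mathbb C1+J}$ against the hypotheses, obtain a character from $A/J\cong\mathbb C$ in part (1), and get nuclearity of $\mathbb C1+J$ in part (2) from permanence of nuclearity under extensions. The only cosmetic difference is that the paper treats $J=A$ in part (2) together with the proper case, noting that the quotient of $\mathbb C1+J$ by $J$ is $\mathbb C$ or $0$ and using $A\neq\mathbb C$ once at the start, whereas you split into two cases.
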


\begin{proof}
By assumption, every closed two-sided ideal $I\subseteq A$ is $\Gamma$-invariant. Thus $B:=I+\mathbb C1$ is a $\Gamma$-invariant unital
$C^*$-subalgebra and $\delta_B$ is an ergodic IRA.
If $I\neq0$ then $B\neq\mathbb C1$: otherwise $0\neq I\subseteq\mathbb C1$
would imply $1\in I$ and hence $A=I=\mathbb C1$.

\medskip

\noindent$(1)$ Following the argument in
\cite[proof of Lemma~6.1]{AmrutamJiang2026}, let $I$ be a non-zero
proper closed two-sided ideal. If $B=A$, then $A/I\simeq\C$, so the quotient map defines a
character of $A$. Hence $\mathbb C1\subsetneq B\subsetneq A$, contradicting
the assumptions in $(1)$.

\medskip

\noindent$(2)$ Let $I$ be a non-zero nuclear closed two-sided ideal.
The quotient $B/I$ is either $\mathbb C$ or $0$, so $B$ is nuclear by
\cite[Proposition~IV.3.1.3]{Blackadar2006}.
Thus $\delta_B\neq\delta_{\mathbb C1}$ is a nuclear IRA, a contradiction.\end{proof}

\begin{remark}
In Proposition~\ref{prop: simple implies trivial IRAs}(1) the following holds.
\begin{itemize}
\item The absence of characters is necessary.
\end{itemize}
Indeed, for $A=\mathbb C^2$ with the trivial action, $\Sub(A)=\{\mathbb C1,A\}$, so the only ergodic IRAs are the two Dirac measures, although $A$ is not simple.
\begin{itemize}
\item The converse fails.
\end{itemize}
Indeed, the algebra $A=C_r^*(\mathbb F_2)$ is simple by \cite{Powers75} and has no characters. The non-trivial proper normal subgroup $[\mathbb F_2,\mathbb F_2]$ gives
$\mathbb C1\subsetneq C_r^*([\mathbb F_2,\mathbb F_2])\subsetneq A$. Hence, for the conjugation action of $\Gamma=\mathbb F_2$, the Dirac measure on this invariant subalgebra is a third ergodic IRA.
\end{remark}

\section{Exotic random subalgebras}\label{sec:exotic}

\noindent Throughout this section, $\Gamma$ is countable and
$\mu\in\Prob(\Gamma)$.

\subsection{Geometric and exotic random subalgebras}

\noindent We recall that the maps $\iota_r:H\mapsto C_r^*(H)$ and $\varphi:B\mapsto B''$ are Borel and equivariant by Theorem~\ref{thm:reduced-group-map} and
Corollary~\ref{cor:bi-commutant-map-borel}.

\begin{definition}\label{def:exotic-measures}
A random subalgebra $\nu\in\Prob(\Sub(C_r^*(\Gamma)))$ is called
\begin{itemize}
\item \textbf{geometric} if $\nu=(\iota_r)_*\eta$ for some $\eta\in\Prob(\Sub(\Gamma))$,
\item \textbf{exotic} if it is not geometric.
\end{itemize}
A random subalgebra $\nu\in\Prob(\Sub(\mathrm L(\Gamma)))$ is called
\textbf{super-exotic} if there is no
$\nu'\in\Prob(\Sub(C_r^*(\Gamma)))$ with $\varphi_*\nu'=\nu$.
\end{definition}

\noindent Let $X$ be a Polish space. A set $S\subseteq X$ is
\textbf{analytic} if $S=f(P)$ for some Polish space $P$ and continuous map $f:P\to X$. Borel images of analytic
sets in Polish spaces are analytic \cite[Proposition~14.4(ii)]{Kechris1995}.
For a Polish space $Y$, we call a map $f:X\to Y$
\textbf{$\sigma(\Sigma_1^1)$-measurable} if, for every Borel set $B\subseteq Y$,
$f^{-1}(B)$ belongs to the $\sigma$-algebra generated by the analytic subsets of $X$.
A subset $E\subseteq X$ is \textbf{universally measurable}
if it belongs to the completion of the Borel $\sigma$-algebra with respect
to every $\nu\in\Prob(X)$. A map $f:X\to Y$ is universally measurable
if $f^{-1}(B)$ is universally measurable for every Borel set $B\subseteq Y$.
Every analytic subset of $X$ is universally measurable
\cite[Theorem~21.10]{Kechris1995}. Consequently, every $\sigma(\Sigma_1^1)$-measurable map is universally measurable. In the sequel, values of Borel probabilities on universally measurable sets are taken
in their completions. Set
$$
I_\Gamma=\iota_r(\Sub(\Gamma))\quad\text{and}\quad
J_\Gamma=\varphi(\Sub(C_r^*(\Gamma))).
$$
Thus $I_\Gamma$ is closed by Theorem~\ref{thm:reduced-group-map}, and
$J_\Gamma$ is analytic.

\begin{lemma}\label{lem:canonical-cstar-lift}\label{RmkConcentration}
The following holds.
\begin{enumerate}
\item A RA $\nu$ of $C_r^*(\Gamma)$ is geometric if and only if
$\nu(I_\Gamma)=1$. Its unique representing random subgroup is
$\eta=(\iota_r^{-1})_*(\nu|_{I_\Gamma})$.
\item One has $J_\Gamma=\{N\in\Sub(\mathrm L(\Gamma)):(N\cap C_r^*(\Gamma))''=N\}$. The map $s:N\mapsto N\cap C_r^*(\Gamma)$ is $\sigma(\Sigma_1^1)$-measurable and
equivariant, and $\varphi\circ s=\mathrm{id}$ on $J_\Gamma$.
A RA $\nu$ of $\mathrm L(\Gamma)$ is not super-exotic if and only if
$\nu(J_\Gamma)=1$; in that case $\varphi_*(s_*\nu)=\nu$.
\end{enumerate}
Both maps $\iota_r^{-1}$ and $s$ are $\Gamma$-equivariant hence preserve invariance, quasi-invariance and $\mu$-stationarity.
\end{lemma}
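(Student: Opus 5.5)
For part (1), I will use Theorem~\ref{thm:reduced-group-map}. It says that $\iota_r$ is an injective Borel map between Polish spaces with closed image $I_\Gamma$. By Lusin--Souslin, $\iota_r$ is then a Borel isomorphism onto $I_\Gamma$; in fact its inverse is even continuous on $I_\Gamma$ by part (2) of that theorem. If $\nu=(\iota_r)_*\eta$, then $\nu(I_\Gamma)=\eta(\iota_r^{-1}(I_\Gamma))=1$. Conversely, if $\nu(I_\Gamma)=1$, put $\eta=(\iota_r^{-1})_*(\nu|_{I_\Gamma})$; then $(\iota_r)_*\eta=\nu$ on Borel sets, because $\nu$ gives no mass outside $I_\Gamma$. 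Uniqueness follows from injectivity: $\eta(E)=\nu(\iota_r(E))$ for every Borel $E$, and $\iota_r(E)$ is Borel.

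For part (2), the first step is the description of $J_\Gamma$. If $N=B''$ with $B\in\Sub(C^*_r(\Gamma))$, then $B\subseteq N\cap C^*_r(\Gamma)\subseteq N$. Taking bicommutants gives $(N\cap C^*_r(\Gamma))''=N$. The reverse inclusion is trivial, since $N\cap C^*_r(\Gamma)$ is a unital $C^*$-subalgebra. The same computation shows $\varphi\circ s=\mathrm{id}$ on $J_\Gamma$. Equivariance of $s$ is clear because $\lambda_g$ normalises $C^*_r(\Gamma)$.

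The main obstacle is the measurability of $s$. My plan is to use the Effros sets $\{B: d(x,B)<r\}$ for $x$ in a countable dense subset of $C^*_r(\Gamma)$ and $r\in\mathbb Q$; these generate the Borel $\sigma$-algebra by Remark~\ref{rem:lower-wijsman-borel}. It therefore suffices to show that
$$S_{x,r}=\{N: \exists\, b\in C^*_r(\Gamma)\text{ with } b\in N,\ \|x-b\|<r\}$$
is analytic. This set is the projection onto the first coordinate of $\{(N,b): b\in N,\ \|x-b\|<r\}\subseteq \Sub(\mathrm L(\Gamma))\times C^*_r(\Gamma)$. That set is Borel, because the relation $b\in N$ is equivalent to $\|E_N(b)-b\|_2=0$, which is closed by the definition of the Effros--Mar\'echal topology. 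Projections of Borel sets are analytic, so $s^{-1}$ of each generator is analytic, and $s$ is $\sigma(\Sigma_1^1)$-measurable.

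For the final assertion, suppose first that $\nu=\varphi_*\nu'$. Since $J_\Gamma$ is analytic, hence universally measurable, we get $\nu(J_\Gamma)=\nu'(\varphi^{-1}(J_\Gamma))=\nu'(\Sub(C_r^*(\Gamma)))=1$. Conversely, suppose $\nu(J_\Gamma)=1$. Then $s_*\nu$ is defined on Borel sets through the completion, because $s$ is universally measurable, and its Borel restriction $\nu'$ is a probability measure. For Borel $E$,
$$\varphi_*\nu'(E)=\nu(s^{-1}\varphi^{-1}(E))=\nu(E\cap J_\Gamma)=\nu(E),$$
using $\varphi\circ s=\mathrm{id}$ on the conull set $J_\Gamma$. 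So $\nu$ is not super-exotic.

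Preservation of invariance, quasi-invariance and stationarity follows from equivariance, since push-forwards commute with the $\Gamma$-action.
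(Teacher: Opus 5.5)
Your proposal is correct and follows the paper's proof essentially step for step. Part (1) uses that $\iota_r$ is a Borel isomorphism onto the closed set $I_\Gamma$. Part (2) uses the sandwich $B\subseteq N\cap C_r^*(\Gamma)\subseteq N$ and realises $s^{-1}$ of each generator as the projection of the closed relation $\{(N,b):E_N(b)=b\}$ intersected with an open set, followed by the same completion argument for $\varphi_*(s_*\nu)=\nu$. The only differences are cosmetic: your appeal to Lusin--Souslin is redundant given the continuity of the inverse, and you use the countable family of balls rather than all norm-open sets as generators.
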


\begin{proof}
$(1)$. By Theorem~\ref{thm:reduced-group-map}, $\iota_r$ is a Borel
isomorphism onto $I_\Gamma$, which gives both assertions.

\medskip
\noindent$(2)$. We first show that $J_\Gamma=\{N\in\Sub(\mathrm L(\Gamma)):(N\cap C_r^*(\Gamma))''=N\}$. If $N\in J_\Gamma$ then $N=B''$ for some $B\in\Sub(C_r^*(\Gamma))$. Hence,
$B\subseteq N\cap C_r^*(\Gamma)\subseteq N$ so $(N\cap C_r^*(\Gamma))''=N$. Conversely, if $(N\cap C_r^*(\Gamma))''=N$ then
$N=\varphi(s(N))\in J_\Gamma$. This also shows that $\varphi\circ s=\mathrm{id}$ on $J_\Gamma$. Let us now show that $s$ is $\sigma(\Sigma_1^1)$-measurable. Note that the relation $\mathcal R=\{(N,a)\in\Sub(\mathrm L(\Gamma))\times C_r^*(\Gamma):E_N(a)=a\}$ is closed as the inverse image of $\{0\}$ under the continuous map $(N,a)\mapsto\|E_N(a)-a\|_2$ from $\Sub(\mathrm L(\Gamma))\times C_r^*(\Gamma)$ to $[0,\infty)$. Given any norm-open $U\subseteq C_r^*(\Gamma)$, the set
$\{N:s(N)\cap U\neq\emptyset\}$ is the projection of
$\mathcal R\cap(\Sub(\mathrm L(\Gamma))\times U)$, hence is analytic.
By Remark~\ref{rem:lower-wijsman-borel}, the sets
$\mathcal I_U\cap\Sub(C_r^*(\Gamma))$ generate the Wijsman Borel
$\sigma$-algebra on $\Sub(C_r^*(\Gamma))$. Their inverse images under $s$
are analytic by the preceding argument. This shows that $s$ is $\sigma(\Sigma_1^1)$-measurable. Finally, every $\varphi_*\rho$ gives full measure to $J_\Gamma$; conversely,
$\nu(J_\Gamma)=1$ and $\varphi\circ s=\mathrm{id}$ on $J_\Gamma$ give
$\varphi_*(s_*\nu)=\nu$.

\medskip

\noindent $\Gamma$-equivariance of $\iota_r^{-1}$ and $s$ is clear.\end{proof}

\subsection{Spanning exotic random subalgebras}

\begin{theorem}\label{prop:spanning-exotic-qira}\label{thm:quasi-invariant-exotic}
The following holds.
\begin{enumerate}
\item An exotic RA of $C_r^*(\Gamma)$ exists if and only if
$|\Gamma|\geq3$. In that case, there is an atomic spanning exotic
qIRA concentrated on abelian subalgebras, hence nuclear.
\item A super-exotic RA of $\mathrm L(\Gamma)$ exists if and only if
$\Gamma$ is infinite. In that case, there is an atomic spanning super-exotic
qIRA concentrated on two-dimensional subalgebras, hence amenable.
\end{enumerate}
\end{theorem}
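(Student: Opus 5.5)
The plan is to reduce both parts to a single atomic construction, as in Remark~\ref{RmkNucRA}\eqref{item:spanning-nuclear-qira}, and to use Lemma~\ref{lem:canonical-cstar-lift} to detect exoticity: a RA of $C_r^*(\Gamma)$ is exotic iff $\nu(I_\Gamma)<1$, and a RA of $\mathrm L(\Gamma)$ is super-exotic iff $\nu(J_\Gamma)<1$. For an atomic measure it therefore suffices that one atom of positive mass lies outside $I_\Gamma$, respectively $J_\Gamma$.

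For the "only if" directions in $(1)$, I would list the cases. If $|\Gamma|=1$, then $\Sub(C_r^*(\Gamma))=\{\C1\}$. If $|\Gamma|=2$, then $C_r^*(\Gamma)\simeq\C^2$ and $\Sub=\{\C1,C_r^*(\Gamma)\}=I_\Gamma$. So every RA is geometric. In $(2)$, if $\Gamma$ is finite then $\mathrm L(\Gamma)=C_r^*(\Gamma)$, so $N\cap C_r^*(\Gamma)=N$ for every $N$. Hence $J_\Gamma=\Sub(\mathrm L(\Gamma))$ and no super-exotic RA exists.

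For $(1)$ with $|\Gamma|\geq3$, the first step is to find a self-adjoint $x$ with $C^*(1,x)\notin I_\Gamma$.
\begin{itemize}
\item If some $g$ has order at least $3$, take $x=\lambda_g+\lambda_g^*$. Suppose $C^*(1,x)=C_r^*(H)$. The Fourier support of $x$ forces $g\in H$, so $\lambda_g\in C^*(1,x)$. Now consider the $*$-anti-automorphism $\theta(\lambda_s)=\lambda_{s^{-1}}$ of $C_r^*(\Gamma)$. It fixes $x$, and it restricts to an automorphism of the commutative algebra $C^*(1,x)$ fixing its generator, hence fixes all of $C^*(1,x)$. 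This gives $\lambda_{g^{-1}}=\lambda_g$, a contradiction.
\item Otherwise $\Gamma$ is an elementary abelian $2$-group with $|\Gamma|\geq4$. Take distinct $g,h\neq e$ and $x=\lambda_g+\lambda_h$. A computation gives $x^3=4x$, so $\dim C^*(1,x)\leq3$. But any $C_r^*(H)$ containing $x$ has $g,h\in H$, hence dimension at least $4$.
\end{itemize}
The second step is to take self-adjoint generators $a_1=x,a_2,\dots$ of $C_r^*(\Gamma)$ and form $\nu=\sum_{n,g}2^{-n}m(g)\delta_{\lambda_gC^*(1,a_n)\lambda_g^*}$ with $m$ of full support. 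Remark~\ref{RmkNucRA} shows that $\nu$ is an atomic spanning qIRA concentrated on abelian algebras. The atom $C^*(1,x)$ has positive mass and lies outside $I_\Gamma$, so $\nu$ is exotic.

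For $(2)$ with $\Gamma$ infinite, the construction uses subalgebras $N_p=\C1+\C p$ with $p$ a nontrivial projection. Since $N_p\cap C_r^*(\Gamma)$ is either $\C1$ or $N_p$, we get $N_p\in J_\Gamma$ iff $p\in C_r^*(\Gamma)$.
\begin{itemize}
\item Existence of a bad projection: $\mathrm L(\Gamma)$ is diffuse because $\Gamma$ is infinite, so it contains a diffuse abelian subalgebra $\simeq\mathrm L^\infty[0,1]$. That subalgebra has uncountably many projections at mutual norm distance $1$, while $C_r^*(\Gamma)$ is norm-separable. Hence some nontrivial projection $p_0\notin C_r^*(\Gamma)$.
\item Spanning: since $\mathrm L(\Gamma)$ has separable predual and is generated by its projections, choose nontrivial projections $q_1,q_2,\dots$ generating it as a von Neumann algebra.
\item The measure: put $\nu=\sum_{n\geq0,g}2^{-n-1}m(g)\delta_{\lambda_gN_{q_n}\lambda_g^*}$ with $q_0=p_0$.
\end{itemize}
As in Remark~\ref{RmkNucRA}, $\nu$ is an atomic qIRA concentrated on two-dimensional, hence amenable, subalgebras. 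It is spanning because any $N$ with $\nu(\Sub(N))=1$ contains every $q_n$, which uses $m(e)>0$ and closedness of $\Sub(N)$ as in Proposition~\ref{prop:alg sup unique}. It is super-exotic since the atom $N_{p_0}\notin J_\Gamma$ has positive mass.

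The main obstacle I expect is the first step of $(1)$, namely certifying $C^*(1,x)\notin I_\Gamma$ in the order-$\geq3$ case, where the anti-automorphism $\theta$ argument carries the weight. The diffuseness and separability argument in $(2)$ is a second point that needs care.
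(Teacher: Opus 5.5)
Your proof is correct, and it takes a genuinely different, shorter route than the paper's.

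\textbf{The paper's route.} In Steps 2 and 3 the paper proves a structural generation result. First, $C_r^*(\Gamma)$ is generated by abelian subalgebras $C^*(1,x)\notin I_\Gamma$. This needs a case analysis: density of $\mathcal H_a\cap\mathcal H_b$ for non-abelian $\Gamma$, and for abelian $\Gamma$ minimal projections of finite subgroups, real and imaginary parts of $\lambda_g$, and the inversion automorphism. Second, $\mathrm L(\Gamma)$ is generated by two-dimensional subalgebras outside $J_\Gamma$, obtained by approximating each generating projection from below by projections $q_{n,k}\notin C_r^*(\Gamma)$. Every atom of the resulting measure is then non-geometric, so the paper in fact gets $\nu(I_\Gamma)=0$, resp.\ $\nu(J_\Gamma)=0$.

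\textbf{Your route.} You observe that exoticity only requires $\nu(I_\Gamma)<1$, resp.\ $\nu(J_\Gamma)<1$. So a single bad atom of positive mass suffices, and you graft it onto the spanning construction of Remark~\ref{RmkNucRA}. This is the same mixing idea as in Remark~\ref{PropSuperExoticDense}. Your witnesses are also more economical:
\begin{itemize}
\item The anti-automorphism $\lambda_s\mapsto\lambda_{s^{-1}}$ treats every element of order at least $3$ uniformly, whether finite or infinite order, abelian or not. Its existence is standard; it is isometric on $\C[\Gamma]$ because $\|\sum_s c_s\lambda_{s^{-1}}\|=\|\sum_s\overline{c_s}\lambda_s\|$, and coordinatewise conjugation on $\ell^2(\Gamma)$ is an antiunitary commuting with each $\lambda_s$.
\item The identity $x^3=4x$ disposes of elementary abelian $2$-groups by a dimension count.
\end{itemize}
In part (2), since $J_\Gamma$ is only analytic, $\nu(J_\Gamma)$ lives in the completion. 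Your bound $\nu(J_\Gamma)\leq 1-\nu(\{N_{p_0}\})<1$ still holds, so this causes no problem.

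\textbf{What each approach buys.} You give up the stronger ``purely exotic'' conclusion and the generation results themselves. Your measures typically have a geometric part, for instance atoms $C_r^*(\langle g\rangle)$ for involutions $g$, but the theorem as stated does not exclude this.
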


\begin{proof}
\emph{Step 1: necessity in $(1)$ and $(2)$.}
If $|\Gamma|\leq2$, every unital subalgebra of $C_r^*(\Gamma)$ is a subgroup
algebra, so $I_\Gamma=\Sub(C_r^*(\Gamma))$. If $\Gamma$ is finite, then
$C_r^*(\Gamma)=\mathrm L(\Gamma)$, so $J_\Gamma=\Sub(\mathrm L(\Gamma))$.
Lemma~\ref{lem:canonical-cstar-lift} proves both necessary conditions.

\medskip
\noindent\emph{Step 2: a generating family for $(1)$.}
Assume $|\Gamma|\geq3$. We show that there is  a sequence
$(B_n)_{n\geq1}$ of unital abelian subalgebras such that
$$
B_n\notin I_\Gamma\quad\text{for all}\quad n\geq1\quad\text{and}\quad C^*(B_n:n\geq1)=C^*_r(\Gamma).
$$
Indeed, let $A=C^*\Bigl(\bigcup\{C^*(1,x):x=x^*\in C^*_r(\Gamma),\ C^*(1,x)\notin I_\Gamma\}\Bigr)$. We first prove that $A=C^*_r(\Gamma)$. We use the following implication on Fourier coefficients.
$$
\text{For all }K\in\Sub(\Gamma)\text{ and }x\in C_r^*(K),\quad \widehat x(g):=\tau_0(\lambda_g^*x)\neq0
\Longrightarrow g\in K
$$
and we separate the proof in two cases: $\Gamma$ non-abelian and $\Gamma$ abelian.

\smallskip
\noindent\emph{Non-abelian $\Gamma$.}
Let $g\in\Gamma$. Note that the map $x\mapsto\widehat x(g)$ on $C^*_r(\Gamma)_{\mathrm{sa}}$ is continuous and real linear,
and is nonzero since $\widehat{\lambda_g+\lambda_g^*}(g)\geq1$.
Its kernel is therefore a proper closed real linear subspace, hence has
empty interior in $C^*_r(\Gamma)_{\mathrm{sa}}$. Hence, the complement of its kernel $\mathcal{H}_g:=\{x\in C^*_r(\Gamma):x=x^*,\ \widehat x(g)\neq0\}$ is open and dense in $C^*_r(\Gamma)_{\mathrm{sa}}$. Choose $a,b\in\Gamma$ with $ab\neq ba$. The set
$\mathcal{H}_a\cap\mathcal{H}_b$ is then norm dense in $C^*_r(\Gamma)_{\mathrm{sa}}$ by the previous discussion.  For $x\in\mathcal{H}_a\cap\mathcal{H}_b$, if there is $K\in\Sub(\Gamma)$ such that $C^*(1,x)=C_r^*(K)$ then
$a,b\in K$, contradicting commutativity of $C^*(1,x)$. It follows that $x\in A$. Hence $\mathcal{H}_a\cap\mathcal{H}_b\subseteq A$ and by density we we deduce that $C^*_r(\Gamma)_{\mathrm{sa}}\subseteq A$ so $C^*_r(\Gamma)=A$.

\smallskip
\noindent\emph{Abelian $\Gamma$.}
We prove $\lambda_g\in A$ for every $g\in\Gamma$.
For every finite subgroup $K\leq\Gamma$ with $|K|\geq3$, its minimal projections
$$
p_\chi=\frac1{|K|}\sum_{k\in K}\overline{\chi(k)}\lambda_k
\quad(\chi\in\widehat K)
$$
satisfy $\widehat{p}_\chi(k)\neq 0$ for all $k\in K$.
If $C^*(1,p_\chi)=C_r^*(H)$ for some $H\in\Sub(\Gamma)$ then $K\leq H$, whereas
$\dim C^*(1,p_\chi)=2<|K|$. Hence $C^*(1,p_\chi)\notin I_\Gamma$ so $p_\chi\in A$
for every $\chi$. Hence, $C_r^*(K)\subseteq A$ (in particular $\lambda_g\in A$ for every $g\in\Gamma$ with finite order $\geq 3$).

\smallskip
\noindent If $g$ has infinite order, both
$C^*(1,\operatorname{Re}\lambda_g)$ and
$C^*(1,\operatorname{Im}\lambda_g)$ are proper subalgebras of
$C_r^*(\langle g\rangle)\simeq C(\mathbb T)$, since neither real nor
imaginary part separates points of $\mathbb T$.
Their generators have nonzero $g$-Fourier coefficients, so neither algebra
belongs to $I_\Gamma$. Hence $\lambda_g=\operatorname{Re}\lambda_g+i\operatorname{Im}\lambda_g\in A$. This adapts
\cite[Example~3.5]{AmrutamJiang2023}.

\smallskip
\noindent It remains to consider an involution $g\neq e$ not contained in
a finite subgroup of order at least $3$.
Choose $h\notin\{e,g\}$. Then $h$ has infinite order, since otherwise
$\langle g,h\rangle$ would be finite of order at least $3$.
Put $r=\operatorname{Re}\lambda_h$ and let $\theta\in\operatorname{Aut}(C_r^*(\Gamma))$
be defined by $\theta(\lambda_k)=\lambda_{k^{-1}}$. $\theta$ is well defined since $\Gamma$ is abelian. Then $\theta(r+\lambda_g)=r+\lambda_g$ but $\theta(\lambda_h)\neq\lambda_h$,
so $\lambda_h\notin C^*(1,r+\lambda_g)$.
Since $\widehat{r+\lambda_g}(h)=1/2$, it follows that
$C^*(1,r+\lambda_g)\notin I_\Gamma$.
As $r\in A$, also $\lambda_g\in A$. This proves $A=C^*_r(\Gamma)$.

\smallskip
\noindent We can now finish the proof of Step $2$. Put $\mathcal F=\{B\in\Sub(A)\setminus I_\Gamma:B\text{ is abelian}\}$. Since $\Gamma$ is countable, there is a sequence $(a_j)_{j\geq1}\subseteq C^*_r(\Gamma)$ with $\overline{\{a_j:j\geq1\}}=C^*_r(\Gamma)$. By the first part of the proof, $C^*_r(\Gamma)=C^*(\bigcup_{B\in\mathcal F}B)$. Choose a finite subset $\mathcal F_{j,k}\subseteq\mathcal F$ such that $d\bigl(a_j,C^*(\bigcup_{B\in\mathcal F_{j,k}}B)\bigr)<1/k$ for all $j,k\geq1$. For $\{B_n:n\geq1\}:=\bigcup_{j,k\geq1}\mathcal F_{j,k}$ one has $a_j\in C^*(B_n:n\geq1)$ for all $j\geq 1$ so $C^*(B_n:n\geq1)=A$.

\medskip
\noindent\emph{Step 3: a generating family for $(2)$.}
Assume $\Gamma$ is infinite. We show that there is a sequence $(B_n)_n\subset\Sub({\rm L}(\Gamma))$ such that 
$$
\dim B_n=2,\quad B_n\notin J_\Gamma\text{ for all }n\geq1,\quad
W^*(B_n:n\geq1)={\rm L}(\Gamma).
$$
Since ${\rm L}(\Gamma)$ has separable predual, choose nonzero projections
$(p_n)_{n\geq1}$ with $W^*(p_n:n\geq1)={\rm L}(\Gamma)$. Since ${\rm L}(\Gamma)$ is diffuse, each corner $p_n{\rm L}(\Gamma)p_n$ also is and so contains an increasing family
$(q_t)_{0<t<\tau_0(p_n)}$ of projections with $\tau_0(q_t)=t$.
Since $\|q_s-q_t\|=1$ for $s\neq t$, norm separability of $C^*_r(\Gamma)$ implies
that at most countably many $q_t$ belong to $C^*_r(\Gamma)$.
For every $k\geq1$, we may therefore choose $q_{n,k}\leq p_n$ with
$$
q_{n,k}\notin C^*_r(\Gamma)\quad\text{and}\quad
\|p_n-q_{n,k}\|_2^2=\tau_0(p_n-q_{n,k})<1/k.
$$
Thus $p_n\in W^*(q_{n,k}:k\geq1)$ for every $n$, and
$W^*(q_{n,k}:n,k\geq1)={\rm L}(\Gamma)$.
Set $B_{n,k}=\mathbb Cq_{n,k}\oplus\mathbb C(1-q_{n,k})$. Let $x\in B_{n,k}\cap C^*_r(\Gamma)$ and write $x=aq_{n,k}+b(1-q_{n,k})\in C^*_r(\Gamma)$. If $x\notin\C1$ then $a\neq b$ and $q_{n,k}=\frac{aq_{n,k}+b(1-q_{n,k})-b1}{a-b}\in C^*_r(\Gamma)$, a contradiction. Hence
$B_{n,k}\cap C^*_r(\Gamma)=\mathbb C1$ so $(B_{n,k}\cap C^*_r(\Gamma))''=\mathbb C1\neq B_{n,k}$. Lemma~\ref{lem:canonical-cstar-lift}$(2)$ gives $B_{n,k}\notin J_\Gamma$.
By enumerating this family we obtain the desired sequence $(B_n)_{n\geq1}$.

\medskip
\noindent\emph{Step 4: the measures in $(1)$ and $(2)$.}
Use the family from Step~2 with $(D,E)=(C_r^*(\Gamma),I_\Gamma)$,
or the family from Step~3 with $(D,E)=(\mathrm L(\Gamma),J_\Gamma)$.
As in Remark~\ref{RmkNucRA}\eqref{item:spanning-nuclear-qira}, choose
$m\in\Prob(\Gamma)$ with $m(g)>0$ for every $g$, and define
$$
\nu=\sum_{n\geq1}\sum_{g\in\Gamma}2^{-n}m(g)
\delta_{\lambda_gB_n\lambda_g^*}\quad\text{and}\quad
\mathcal D=\{\lambda_gB_n\lambda_g^*:n\geq1,\ g\in\Gamma\}.
$$
The measure $\nu$ is atomic, and every point of the $\Gamma$-invariant countable set
$\mathcal D$ has positive mass. Thus, for every Borel set $F\subseteq\Sub(D)$
and $h\in\Gamma$,
$$
\nu(F)=0\iff F\cap\mathcal D=\emptyset
\iff h^{-1}F\cap\mathcal D=\emptyset\iff h\nu(F)=0.
$$
Hence $\nu$ is quasi-invariant. Since $E$ is invariant and $B_n\notin E$,
we have $\nu(E)=0$; Lemma~\ref{lem:canonical-cstar-lift} gives exoticity
in $(1)$ and super-exoticity in $(2)$.

\smallskip
\noindent If $C\in\Sub(D)$ satisfies $\nu(\Sub(C))=1$, then
$B_n\subseteq C$ for every $n$, since $\nu(\{B_n\})\geq2^{-n}m(e)>0$.
The generating property gives $C=D$, so Proposition~\ref{prop:alg sup unique}
implies $\langle\nu\rangle=D$.
In $(1)$, every atom is abelian, hence nuclear.
In $(2)$, every atom is two-dimensional, hence amenable.\end{proof}

\begin{remark}\label{PropSuperExoticDense}
Let $(D,E)\in\{(C_r^*(\Gamma),I_\Gamma),(\mathrm L(\Gamma),J_\Gamma)\}$
and let $\mathcal P$ be one of the classes RA, IRA, qIRA or $\mu$-SRA on $D$.
The sets
$$\mathcal X=\{\nu\in\mathcal P:\nu(E)<1\}\quad\text{and}\quad\mathcal X_{\mathrm{sp}}=\{\nu\in\mathcal X:\langle\nu\rangle=D\}$$
are either both empty or both dense in $\mathcal P$ for the weak topology.
Thus, if $\mathcal P$ contains an exotic measure for $D=C_r^*(\Gamma)$,
resp. a super-exotic measure for $D=\mathrm L(\Gamma)$, then its spanning
exotic, resp. spanning super-exotic, measures are dense in $\mathcal P$.

\medskip

\noindent Indeed, by Lemma~\ref{lem:canonical-cstar-lift}, $\mathcal X$ is the set of exotic,
respectively super-exotic, measures in $\mathcal P$.
If $\mathcal X=\emptyset$, then $\mathcal X_{\mathrm{sp}}=\emptyset$.
Otherwise, fix $\rho\in\mathcal X$. For $\nu\in\mathcal P$ and
$0<\varepsilon<1/2$, put
$$
\nu_\varepsilon=(1-2\varepsilon)\nu+\varepsilon\rho+\varepsilon\delta_D.
$$
Note that $\delta_D\in\mathcal P$ and the classes RA, IRA, $\mu$-SRA and qIRA are convex hence $\nu_\varepsilon\in\mathcal{P}$. Moreover, since 
$\nu_\varepsilon(E)\leq1-\varepsilon(1-\rho(E))<1$ one has  $\nu_\varepsilon\in\mathcal X$. If $C\in\Sub(D)$ and $\nu_\varepsilon(\Sub(C))=1$ then, by positivity, $\delta_D(\Sub(D)\setminus\Sub(C))=0$ which implies that $D\subseteq C$. Hence $C=D$, and Proposition~\ref{prop:alg sup unique} gives $\langle\nu_\varepsilon\rangle=D$ so $\nu_\varepsilon\in\mathcal X_{\mathrm{sp}}$. Since $\nu_\varepsilon\to\nu$ weakly this shows the Remark.\end{remark}

\subsection{Non-atomic exotic invariant random subalgebras}

The following tensor-product construction is a variant of
\cite[Example~4.3]{AHO25}.

\begin{lemma}\label{lem:bernoulli-exotic}
Let $H$ be countable and
$\Gamma=H^{(\mathbb Z)}\rtimes\langle t\rangle=H\wr\mathbb Z$.
Consider either
$$
(D,\mathcal D)=(C_r^*(H),C_r^*(\Gamma))
\quad\text{or}\quad(D,\mathcal D)=(\mathrm L(H),\mathrm L(\Gamma)).
$$
Let $\rho_k:D\to\mathcal D$ be the canonical embedding into the $k$th-coordinate
subgroup $H_k\leq H^{(\Z)}\leq\Gamma$, with $tH_kt^{-1}=H_{k+1}$.
Fix $B\in\Sub(D)\setminus\{\mathbb C1\}$ invariant under conjugation by $H$,
and put $B_0=\mathbb C1$, $B_1=B$.
For $\omega\in\Omega:=\{0,1\}^{\mathbb Z}$, define
$$
A_\omega=
\begin{cases}
C^*(\rho_k(B_{\omega_k}):k\in\mathbb Z),&D=C_r^*(H),\\
W^*(\rho_k(B_{\omega_k}):k\in\mathbb Z),&D=\mathrm L(H).
\end{cases}
$$
The map $\Theta:\omega\mapsto A_\omega$ is Borel and injective, and
$$
\nu=\Theta_*\beta\quad\text{where}\quad
\beta=(\tfrac12\delta_0+\tfrac12\delta_1)^{\otimes\mathbb Z},
$$
is a non-atomic ergodic IRA. If if $D=C_r^*(H)$ and $B\notin I_H$ then $\nu(I_\Gamma)=0$, hence $\nu$ is exotic. If $D=\mathrm L(H)$ and $B\notin J_H$ then $\nu(J_\Gamma)=0$, hence $\nu$ is super exotic.\end{lemma}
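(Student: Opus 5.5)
We verify, in order: Borelness and injectivity of $\Theta$, equivariance, non-atomicity and ergodicity, and finally the support statement.

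\emph{Borelness.} For the $C^*$-case we show $\Theta$ is continuous from the product topology on $\Omega$ to the lower topology, and then apply Lemma~\ref{lem:lower-to-borel}. If $A_\omega$ meets an open $U$, then $U$ contains a polynomial in finitely many elements of $\rho_k(B_{\omega_k})$, $|k|\le n$. Every $\omega'$ agreeing with $\omega$ on $[-n,n]$ has $A_{\omega'}$ containing that same element, so $\{\omega:A_\omega\cap U\neq\emptyset\}$ is open. In the von Neumann case, $A_\omega$ is the infinite tensor product $\overline{\bigotimes}_k \rho_k(B_{\omega_k})$ inside $\overline{\bigotimes}_k \mathrm L(H_k)\subseteq \mathrm L(\Gamma)$. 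Its conditional expectation is $E_{A_\omega}=\bigotimes_k E_{B_{\omega_k}}$ on elementary tensors, extended by $\|\cdot\|_2$-density, and it is composed with $E_{\mathrm L(H^{(\Z)})}$. So for $x$ supported on finitely many coordinates, $\|E_{A_\omega}(x)\|_2$ depends only on finitely many $\omega_k$ and is continuous. By density and $2$-norm contractivity, all the maps $\omega\mapsto\|E_{A_\omega}(x)\|_2$ are continuous, so $\Theta$ is continuous.

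\emph{Injectivity.} We recover $\omega$ from $A_\omega$ via the coordinate conditional expectations. Fix $b\in B\setminus\C1$ with $\tau(b)=0$. Then $\rho_k(b)\in A_\omega$ iff $\omega_k=1$. When $\omega_k=0$, the tensor-product expectation gives $E_{A_\omega}(\rho_k(b))=\tau(b)1=0\neq\rho_k(b)$. In the $C^*$-case we use the canonical expectation $E_{H^{(\Z)}}$ and the slice maps, or simply pass to $A_\omega''$ and the same computation.

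\emph{Equivariance.} This is the key algebraic point. For $h\in H_j$, we have $\lambda_h\rho_k(B)\lambda_h^*=\rho_k(B)$: if $j\neq k$ this holds because the coordinates commute, and if $j=k$ it holds by the $H$-invariance of $B$. Hence $hA_\omega=A_\omega$. Moreover $\lambda_t A_\omega\lambda_t^*=A_{\sigma\omega}$, where $\sigma$ is the shift. Thus $\Theta$ intertwines the $\Gamma$-action on $\Omega$, in which $H^{(\Z)}$ acts trivially and $t$ acts by the shift, with conjugation. Since $\beta$ is shift-invariant and ergodic (Bernoulli) and $\Theta$ is Borel, $\nu$ is an ergodic IRA: an invariant Borel set pulls back to a shift-invariant set. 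Because $\Theta$ is injective and $\beta$ has no atoms, $\nu$ has no atoms.

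\emph{Exoticity.} This is the main obstacle. It suffices to show that $A_\omega\notin I_\Gamma$ (resp.\ $J_\Gamma$) whenever $\omega\neq0$, since $\beta(\{0\})=0$. Suppose $\omega_k=1$ and $A_\omega=C_r^*(K)$. Apply the conditional expectation onto $\rho_k(C_r^*(H))$, which is $\mathrm{id}$ on the $k$th tensor factor and $\tau$ elsewhere. It maps $A_\omega$ onto $\rho_k(B)$ and maps $C_r^*(K)$ onto $C_r^*(K\cap H_k)$. Hence $B=C_r^*(\rho_k^{-1}(K\cap H_k))\in I_H$, a contradiction. In the von Neumann case, assume $A_\omega\in J_\Gamma$, i.e.\ $(A_\omega\cap C_r^*(\Gamma))''=A_\omega$. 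Applying the same expectation $E_k$, which maps $C_r^*(\Gamma)$ into $\rho_k(C_r^*(H))$ and $A_\omega$ into $\rho_k(B)$ and is normal, we get $E_k(A_\omega\cap C_r^*(\Gamma))\subseteq \rho_k(B)\cap\rho_k(C_r^*(H))$. This set is dense in $\rho_k(B)$ in the strong topology, since $E_k$ is normal and surjective from $A_\omega$ onto $\rho_k(B)$. Therefore $(B\cap C_r^*(H))''=B$, i.e.\ $B\in J_H$, again a contradiction. The delicate point is checking that $E_k$ really maps $A_\omega$ onto $\rho_k(B)$, which again follows from the tensor-product structure.
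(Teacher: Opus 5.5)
Your proposal is correct and follows essentially the same route as the paper. Borelness comes from lower-continuity in the $C^*$-case and from local constancy of $\omega\mapsto E_{A_\omega}(x)$ on finite tensors in the von Neumann case. Equivariance uses $H$-invariance of $B$ together with the shift, and (super-)exoticity uses the coordinate expectation $E_k$ with $E_k(A_\omega)=\rho_k(B_{\omega_k})$. The point you flag as delicate is handled in the paper exactly as you suggest: it computes $E_k\bigl(\prod_{l\in F}\rho_l(b_l)\bigr)=\bigl(\prod_{l\in F\setminus\{k\}}\tau_0(b_l)\bigr)\rho_k(b_k)$ on the dense subalgebra and extends by norm continuity or normality. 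The paper also reads off injectivity directly from this identity, rather than via $E_{A_\omega}$ or $A_\omega''$.
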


\begin{proof}
Let $G:=H^{(\mathbb Z)}$ and, for $\omega\in\Omega$, define the dense unital $*$-subalgebra of $A_\omega$ (in the norm topology in the $C^*$-case or the $\sigma$-weak topology in the von Neumann case)
$$
\mathcal A_\omega:=\operatorname{span}\left\{
\prod_{k\in F}\rho_k(b_k):
F\subseteq\mathbb Z\text{ finite},\ b_k\in B_{\omega_k}\ \text{ for all }k\in F
\right\}.
$$
\noindent For $k\in\mathbb Z$, let
$E_k:{\rm L}(\Gamma)\to{\rm L}(H_k)$ be the canonical trace-preserving
conditional expectation. Thus $E_k(\lambda_g)=1_{H_k}(g)\lambda_g$ for all $g\in\Gamma$ and $E_k(C_r^*(\Gamma))=C_r^*(H_k)$.
For $b=\prod_{l\in F}\rho_l(b_l)\in\mathcal A_\omega$, where
$F\subseteq\mathbb Z$ is finite and $b_l\in B_{\omega_l}$, put $b_l=1$
for $l\notin F$. Then
$$
E_k(b)=\left(\prod_{l\in F\setminus\{k\}}\tau_0(b_l)\right)\rho_k(b_k).
$$ Norm continuity in the
$C^*$-case, and normality in the von Neumann case, therefore give $E_k(A_\omega)\subseteq\rho_k(B_{\omega_k})$.
The reverse inclusion holds because $\rho_k(B_{\omega_k})\subseteq A_\omega$
and $E_k$ fixes this subalgebra. Since $B\neq\mathbb C1$, the equality $E_k(A_\omega)=\rho_k(B_{\omega_k})$ (for all $k$)
implies that $\Theta$ is injective.

\medskip

\noindent We now show measurability. In the $C^*$-case, we show that $\Theta$ is continuous for the lower topology, hence Borel
by Lemma~\ref{lem:lower-to-borel}. To do so, let $U\subseteq C_r^*(\Gamma)$ be a non-empty norm-open set and let us show that $\Theta^{-1}(\mathcal{I}_U)=\{\omega\in\Omega:A_\omega\cap U\neq \emptyset\}$ is open in $\Omega$. Let $\omega\in\Omega$ be such that $A_\omega\cap U\neq\emptyset$. Since $\mathcal{A}_\omega$ is norm dense in $A_\omega$, choose an element
$b=\sum_{j=1}^m\prod_{k\in F}\rho_k(b_{j,k})\in\mathcal A_\omega\cap U$,
where $F\subseteq\mathbb Z$ is finite and $b_{j,k}\in B_{\omega_k}$. Then, the set $\mathcal{V}:=\{\omega'\in\Omega:\omega'|_F=\omega|_F\}$ is open in $\Omega$ and $b\in \mathcal{A}_{\omega'}\cap U$ for all $\omega'\in\mathcal{V}$. Hence $\mathcal{V}\subseteq\Theta^{-1}(\mathcal{I}_U)$.

\medskip

\noindent In the von Neumann case, we use the argument from \cite[proof of Lemma~4.10]{AmrutamFimaJiang2026}. Identify $\mathrm L(G)=\overline{\bigotimes}_{k\in\mathbb Z}(\mathrm L(H),\tau_0)$ and write $E_\omega:{\rm L}(G)\to A_\omega$ the canonical trace preserving conditional expectation. For finite $F\subseteq\mathbb Z$ and $x_k\in\mathrm L(H)$ one has
$$
E_{\omega}\left(\prod_{k\in F}\rho_k(x_k)\right)
=\prod_{k\in F}\rho_k\bigl(E_{B_{\omega_k}}(x_k)\bigr).
$$
The right-hand side is locally constant in $\omega$. Since finite tensors are $2$-norm dense and all conditional expectations are $2$-norm contractions, the map
$\omega\mapsto E_{\omega}(x)$ is $2$-norm continuous for all $x\in\mathrm L(G)$. Hence the map $\Omega\to\Sub({\rm L}(G))$ is continuous. Since the inclusion $\Sub({\rm L}(G))\subset\Sub({\rm L}(\Gamma))$ is also continuous, the map $\Theta$ is continuous.

\medskip

\noindent Define $S:\Omega\to\Omega$ by $(S\omega)_k=\omega_{k-1}$. The $H$-invariance of $B$ gives $\lambda_gA_\omega\lambda_g^*=A_\omega$ for all $g\in G$ and $\lambda_tA_\omega\lambda_t^*=A_{S\omega}$. Thus $\Theta$ is equivariant for the $\Gamma$-action on $\Omega$
factoring through $\Gamma/G\simeq\mathbb Z$ and generated by $S$. Since $S$ is $\beta$-preserving and mixing (hence ergodic), the measure $\nu=\Theta_*\beta$ is invariant
and ergodic. Moreover, injectivity of $\Theta$
and non-atomicity of $\beta$ imply that $\nu$ is non-atomic.

\medskip

\noindent It remains to show exoticity (resp. super-exoticity). The set
$$\Omega_1=\{\omega:\omega_k=1\text{ for some }k\in\mathbb Z\}$$
has $\beta$-measure one. Fix $\omega\in\Omega_1$ and $k$ with $\omega_k=1$.
In the $C^*$-case, if $A_\omega=C_r^*(K)$ for a subgroup $K\leq\Gamma$, then $\rho_k(B)=E_k(A_\omega)=E_k(C_r^*(K))=C_r^*(K\cap H_k)$
contradicting $B\notin I_H$ after identifying $H_k$ with $H$. Hence, $\Theta(\Omega_1)\subseteq\Sub(C^*_r(\Gamma))\setminus I_\Gamma$ so $\nu(I_\Gamma)=0$. In the von Neumann case, suppose $A_\omega=C''$ for some
$C\in\Sub(C_r^*(\Gamma))$. Then, $E_k(C)\subseteq E_k(A_\omega)=\rho_k(B_{\omega_k})=\rho_k(B)$ and $E_k(C)\subseteq E_k(C^*_r(\Gamma))= C_r^*(H_k)=\rho_k(C^*_r(H))$ so $E_k(C)\subseteq \rho_k(B\cap C^*_r(H))$. Since $C$ is $\sigma$-weakly dense in $C''=A_\omega$ and both $E_k$ and $\rho_k$ are normal,
$$
\rho_k(B)=E_k(A_\omega)
\subseteq\overline{E_k(C)}^{\,\mathrm{\sigma-w}}
\subseteq\rho_k\!\left((B\cap C_r^*(H))''\right)
\subseteq\rho_k(B).
$$
Thus $(B\cap C_r^*(H))''=B$, contradicting $B\notin J_H$. Hence, $\Theta(\Omega_1)\subseteq\Sub({\rm L}(\Gamma))\setminus J_\Gamma$
so $\nu(J_\Gamma)=0$.\end{proof}

\begin{corollary}\label{cor:nonatomic-exotic}
The following holds.
\begin{enumerate}
\item If $H$ is countable and not $C^*$-simple, then
$C_r^*(H\wr\mathbb Z)$ has a non-atomic ergodic exotic IRA.
\item $\mathrm L(\mathbb Z\wr\mathbb Z)$ has a non-atomic ergodic
super-exotic IRA.
\end{enumerate}
\end{corollary}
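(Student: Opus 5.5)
Both parts reduce to Lemma~\ref{lem:bernoulli-exotic}. In each case we exhibit a subalgebra $B\in\Sub(D)\setminus\{\mathbb C1\}$ that is invariant under conjugation by $H$ and lies outside $I_H$ (respectively $J_H$). The lemma then produces the non-atomic ergodic IRA $\nu=\Theta_*\beta$ with $\nu(I_\Gamma)=0$ (respectively $\nu(J_\Gamma)=0$). By Lemma~\ref{lem:canonical-cstar-lift}, this measure is exotic (respectively super-exotic).

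For $(1)$, let $H$ be non-$C^*$-simple. The natural candidate is an $H$-invariant subalgebra that is not a group algebra. If $\Rad(H)\neq\{e\}$, I would take a nontrivial finite or infinite amenable normal subgroup $N$. If $N$ contains an element $g$ of infinite order or $|N|\geq 3$, I would build $B$ from a conjugation-invariant piece such as the center of $C^*_r(N)$. Otherwise I would use a projection or self-adjoint element that generates a proper subalgebra, as in Step~2 of Theorem~\ref{prop:spanning-exotic-qira}. The cleaner route works uniformly: non-$C^*$-simplicity gives (Kalantar--Kennedy / Breuillard--Kalantar--Kennedy--Ozawa) a nontrivial amenable $H$-invariant structure, but the point is simply to produce an $H$-invariant $B\notin I_H$ with $B\neq\mathbb C1$. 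I would take a nontrivial amenable normal subgroup $N$. If $N$ is not $C^*$-simple-trivially handled, I would take $B=Z(C^*_r(N))$ intersected with the $H$-fixed part, or more concretely $B=C^*(1,p)$ for a central $H$-invariant projection. The main candidate for $B$ is $C^*(1,x)$, where $x=\sum_{k\in F}\lambda_k$ for a finite conjugacy class $F\neq\{e\}$; this $x$ is central in $C^*_r(H)$, hence $H$-invariant. When no finite conjugacy class exists, I would instead use an $H$-invariant subalgebra $\mathbb C1+I$ coming from a nontrivial proper ideal $I$, whose existence is exactly the definition of non-$C^*$-simplicity. This is the step I expect to be the main obstacle. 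One must show that such a $B$ is not of the form $C^*_r(K)$. For $B=\mathbb C1+I$, if $B=C^*_r(K)$ then $K$ is normal and $K\neq\{e\}$. One must rule this out, or else fall back on modifying $B$, for instance by taking $C^*(1,y)$ with $y=y^*\in I$ chosen with Fourier support not in a single abelian subgroup, as in the non-abelian case of Step~2.

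For $(2)$, take $H=\mathbb Z$, so $D=\mathrm L(\mathbb Z)\cong \mathrm L^\infty(\mathbb T)$, which is abelian. Every subalgebra is therefore $H$-invariant. Choose a projection $q\in\mathrm L^\infty(\mathbb T)$ whose indicator set is not equal a.e.\ to a set giving a continuous function, so that $q\notin C(\mathbb T)$; for instance, an indicator of an interval. Set $B=\mathbb Cq\oplus\mathbb C(1-q)$. The computation of Step~3 of Theorem~\ref{prop:spanning-exotic-qira} gives $B\cap C^*_r(\mathbb Z)=\mathbb C1$, hence $(B\cap C^*_r(\mathbb Z))''\neq B$. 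By Lemma~\ref{lem:canonical-cstar-lift}$(2)$, $B\notin J_{\mathbb Z}$. Lemma~\ref{lem:bernoulli-exotic} then concludes.
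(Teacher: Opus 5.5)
Your part (2) is correct and is essentially the paper's argument; any measurable $E$ with $0<m(E)<1$ works, since $\mathbb T$ is connected and $m$ has full support.

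Part (1), however, has a genuine gap. You never produce the required $B$, and in fact applying Lemma~\ref{lem:bernoulli-exotic} to $H$ itself cannot work in general. The obstacle you flag for $B=\mathbb C1+I$ is decisive, not technical. Suppose $\mathbb C1+I=C_r^*(K)$. Then $I$ is the kernel of a character $\chi$ of $C_r^*(K)$. For $k\in K$ and $g\notin K$, the element $\lambda_g(\lambda_k-\chi(\lambda_k)1)=\lambda_{gk}-\chi(\lambda_k)\lambda_g$ lies in $I\subseteq C_r^*(K)$ but has Fourier support outside $K$, so it vanishes. Since $K=\{e\}$ would force $1\in I$, this gives $K=H$, i.e.\ $\dim C_r^*(H)/I=1$. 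So your choice works exactly when $C_r^*(H)$ has a nonzero ideal of codimension at least $2$, and that can fail. For $H=\mathbb Z/2\mathbb Z$, which is not $C^*$-simple, one has $\Sub(C_r^*(H))=\{\mathbb C1,C_r^*(H)\}\subseteq I_H$, so no admissible $B$ exists at all.

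Your fallbacks do not help. First, $C^*(1,y)$ with $y\in I$ is in general not $H$-invariant, and the lemma needs this to get $\lambda_gA_\omega\lambda_g^*=A_\omega$ for $g\in H^{(\mathbb Z)}$. Second, a central element supported on a finite conjugacy class may generate a subgroup algebra, e.g.\ $C^*(1,\lambda_g)=C_r^*(\langle g\rangle)$ for central $g$. Third, neither nontrivial finite conjugacy classes nor nontrivial amenable normal subgroups are available in every non-$C^*$-simple group.

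The missing idea is to enlarge the base group before applying the lemma. With $A=C_r^*(H)$, a nonzero proper ideal $I$ and $q:A\to A/I$, set $J=\ker(q\otimes\mathrm{id}_A)\triangleleft A\otimes_{\min}A=C_r^*(H\times H)$. Then $J\supseteq I\otimes 1$ is nonzero. Moreover $(A\otimes_{\min}A)/J\simeq(A/I)\otimes_{\min}A\supseteq 1\otimes A$ is not one-dimensional, since $H\neq\{e\}$. By the computation above (equivalently \cite[Lemma~6.1]{AmrutamJiang2026}), $J+\mathbb C1$ is therefore an $(H\times H)$-invariant subalgebra outside $I_{H\times H}$. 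Lemma~\ref{lem:bernoulli-exotic} then gives a non-atomic ergodic exotic IRA $\nu_0$ of $C_r^*((H\times H)\wr\mathbb Z)$. Pairing coordinates $2n$ and $2n+1$ identifies $(H\times H)\wr\mathbb Z$ with the index-two normal subgroup $\Lambda=H^{(\mathbb Z)}\rtimes 2\mathbb Z$ of $\Gamma=H\wr\mathbb Z$.

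Finally, view $\nu_0$ in $\Sub(C_r^*(\Gamma))$ via the Borel inclusion of Proposition~\ref{rem:subspace-closed} and average: $\nu=\frac12(\nu_0+t_*\nu_0)$. This measure is $\Gamma$-invariant, since $t$ normalises $\Lambda$ and $t^2\in\Lambda$. It is ergodic, because a $\Gamma$-invariant set is $\Lambda$-invariant. It is non-atomic, and it is exotic because $I_\Gamma\cap\Sub(C_r^*(\Lambda))=I_\Lambda$.
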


\begin{proof}
$(1)$. Put $A=C_r^*(H)$ and choose a nonzero proper closed ideal $I$.
For the quotient map $q:A\to A/I$, the ideal $J=\ker(q\otimes\mathrm{id}_A)\triangleleft
A\otimes_{\min}A=C_r^*(H\times H)$
is nonzero, since $I\otimes1\subseteq J$, and proper, with
$(A\otimes_{\min}A)/J\simeq(A/I)\otimes_{\min}A
\supseteq1\otimes A\simeq A\neq\mathbb C$.
By \cite[Lemma~6.1]{AmrutamJiang2026}, $B=J+\mathbb C1$ is invariant and
not a subgroup algebra. Lemma~\ref{lem:bernoulli-exotic} gives a non-atomic
ergodic exotic IRA $\nu_0$ of $C_r^*((H\times H)\wr\mathbb Z)$.

\medskip
\noindent Let $\Gamma=H\wr\mathbb Z$ and let $t$ generate its $\mathbb Z$-factor and consider the normal index $2$ subgroup $\Lambda:=H^{(\Z)}\rtimes2\Z$ of $\Gamma$. The isomorphism
$(h_k)_{k\in\mathbb Z}\mapsto((h_{2n},h_{2n+1}))_{n\in\mathbb Z}$
from $H^{(\mathbb Z)}$ onto $(H\times H)^{(\mathbb Z)}$ intertwines
conjugation by $t^2$ with the shift by one coordinate, hence identifies
$\Lambda=H^{(\mathbb Z)}\rtimes2\mathbb Z
\simeq(H\times H)\wr\mathbb Z$.
Using this isomorphism, view the preceding measure $\nu_0$ on $\Sub(C^*_r(\Lambda))$ and then on $\Sub(C_r^*(\Gamma))$ via the Borel
inclusion of Proposition~\ref{rem:subspace-closed}, and set
$\nu=\tfrac12(\nu_0+t_*\nu_0)$. Since $\nu_0$ is $\Lambda$-invariant and
$t^{-1}\Lambda t=\Lambda$, for every $\lambda\in\Lambda$, $\lambda_*(t_*\nu_0)=t_*\bigl((t^{-1}\lambda t)_*\nu_0\bigr)=t_*\nu_0$, so $\lambda_*\nu=\nu$. Moreover, $t^2\in\Lambda$ gives
$t_*\nu=\tfrac12(t_*\nu_0+(t^2)_*\nu_0)=\nu$.
As $\Gamma=\langle\Lambda,t\rangle$, $\nu$ is $\Gamma$-invariant. $\nu$ is non-atomic since $\nu_0$ is. If $E$ is a $\Gamma$-invariant Borel set then it is $\Lambda$-invariant and since $\nu_0$ is ergodic for the $\Lambda$-action we get $\nu(E)=\tfrac12(\nu_0(E)+\nu_0(t^{-1}\!\cdot E))=\nu_0(E)\in\{0,1\}$ so $\nu$ is ergodic. Since $I_\Gamma$ is $\Gamma$-invariant, $\nu_0$ is supported on $\Sub(C^*_r(\Lambda))$ and $I_\Gamma\cap \Sub(C^*_r(\Lambda))=I_\Lambda$ one has $\nu(I_\Gamma)=\nu_0(I_\Gamma)=\nu_0(I_\Lambda)=0$. This shows the exoticity of $\nu$.

\medskip
\noindent$(2)$. Let $m$ be the Haar probability measure on $\mathbb T$. Under the Fourier
transform, view  $C_r^*(\mathbb Z)=C(\mathbb T)$ and $\mathrm L(\mathbb Z)=\mathrm L^\infty(\mathbb T,m)$. Fix any $m$-measurable $E\subseteq\mathbb T$ with $0<m(E)<1$, and define
$B:=\mathbb C1_E\oplus\mathbb C1_{E^c}.
$
If $h\in B\cap C(\mathbb T)$, there are $a,b\in\mathbb C$ such that
$h=a1_E+b1_{E^c}$ almost everywhere. Thus $(h-a)(h-b)=0$ almost
everywhere, hence everywhere by continuity and since $m$ has full support.
Since $\mathbb T$ is connected, $h$ must be constant. Hence,
$B\cap C(\mathbb T)=\mathbb C1$ and $(B\cap C(\mathbb T))''\neq B$, so $B\notin J_{\mathbb Z}$ by Lemma~\ref{lem:canonical-cstar-lift}$(2)$.
Conjugation by $\mathbb Z$ is trivial, so Lemma~\ref{lem:bernoulli-exotic} applies.
\end{proof}

\begin{remark}The measures in Lemma~\ref{lem:bernoulli-exotic} and
Corollary~\ref{cor:nonatomic-exotic} are concentrated on subalgebras of $C^*_r(H^{(\mathbb Z)})$ (or ${\rm L}(H^{(\mathbb Z)})$ in the von Neumann setting), hence they are not spanning in the ambient wreath-product algebra.\end{remark}

\section{Groups C*-algebras without exotic invariant random subalgebra}\label{sec:noexotic}

\noindent In this section we produce examples of groups $\Gamma$ for which $C^*_r(\Gamma)$ has no exotic IRA. Our main result is Corollary \ref{cor:hyperbolic_no_exotic} in which we prove that the reduced C*-algebra of any torsion-free non-amenable hyperbolic group has no exotic IRA.

\vspace{0.2cm}

\noindent The following definition is \cite[Definition~2.1]{GL25}, but we do not assume the space to be compact.

\begin{definition}
Let $\alpha:\Gamma\curvearrowright X$ be an action by homeomorphisms
on a topological space $X$, and write $sx=\alpha_s(x)$. An element $s\in\Gamma$ is a \textbf{recurrent direction} for $x\in X$ if
for every open neighbourhood $U$ of $x$ and every $N\geq1$, there exists
$n\geq N$ such that $s^n x\in U$. Write
$\mathrm R_\Gamma(x)=\{s\in\Gamma:s\text{ is a recurrent direction for }x\}$. A point $x$ is a \textbf{boomerang point} if $\mathrm R_\Gamma(x)=\Gamma$.
\end{definition}

\begin{remark}
Suppose that $x$ has a countable neighbourhood basis. Then
$s\in\mathrm R_\Gamma(x)$ if and only if there are positive integers
$m_j\uparrow\infty$ such that $s^{m_j}x\to x$.
Indeed, choose a countable open neighbourhood basis $(U_j)$ of $x$ and set
$V_j=\bigcap_{i=1}^jU_i$. If $s\in\mathrm R_\Gamma(x)$, starting with
$m_0=0$, choose successively $m_j>m_{j-1}$ with $s^{m_j}x\in V_j$. By construction $s^{m_j}x\to x$. The converse implication is clear and does not require any countability assumption.
Such a sequence $(m_j)$ is called a \textbf{return sequence for $s$ at $x$}.
\end{remark}

\noindent In the sequel we write $\mathrm{Boom}_\alpha(X)=\{x\in X:\mathrm R_\Gamma(x)=\Gamma\}$.

\begin{proposition}\label{prop:ira-boomerang}
Let $\Gamma$ be a countable group and let $\alpha:\Gamma\curvearrowright X$
be an action by homeomorphisms on a metrizable space $X$. Then:
\begin{enumerate}
\item $\mathrm{Boom}_\alpha(X)$ is a $G_\delta$ subset of $X$.
\item If $X$ is moreover separable, every $\Gamma$-invariant Borel probability measure $\nu$ on $X$
satisfies $\nu(\mathrm{Boom}_\alpha(X))=1$.
\end{enumerate}
\end{proposition}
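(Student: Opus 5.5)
The plan is to write $\mathrm{Boom}_\alpha(X)=\bigcap_{s\in\Gamma}\mathrm R^{-1}(s)$, where $\mathrm R^{-1}(s):=\{x\in X:s\in\mathrm R_\Gamma(x)\}$. Since $\Gamma$ is countable, both assertions reduce to a single fixed $s\in\Gamma$. We must show that this set is $G_\delta$, and, when $X$ is separable and $\nu$ is invariant, that it is $\nu$-conull.

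\emph{Step 1 ($G_\delta$).} Fix a compatible metric $d$ on $X$. Using the sequential characterisation from the preceding Remark (metrizable spaces are first countable), $s\in\mathrm R_\Gamma(x)$ holds if and only if, for every $k,N\geq1$, there is $n\geq N$ with $d(s^nx,x)<1/k$. Hence
$$
\mathrm R^{-1}(s)=\bigcap_{k,N\geq1}\ \bigcup_{n\geq N}\{x\in X:d(\alpha_s^n(x),x)<1/k\}.
$$
Each inner set is open, because $x\mapsto d(\alpha_s^n x,x)$ is continuous ($\alpha_s$ is a homeomorphism). Therefore $\mathrm R^{-1}(s)$ is $G_\delta$, and so is the countable intersection over $s$. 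This settles (1).

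\emph{Step 2 (measure).} Fix $s$. The measure $\nu$ is invariant under the single homeomorphism $T=\alpha_s$, and we apply Poincaré recurrence in its topological form. Since $X$ is separable metrizable, choose a countable basis $(U_j)$. For each $j$, set
$$
W_j=\{x\in U_j:T^nx\notin U_j\text{ for all large }n\},
$$
that is, the points of $U_j$ that eventually never return. We want $\nu(W_j)=0$. Write $W_j=\bigcup_N W_{j,N}$, where $W_{j,N}=\{x\in U_j:T^nx\notin U_j\ \forall n\geq N\}$. Then the sets $T^{-kN}W_{j,N}$, $k\geq0$, are pairwise disjoint. Indeed, if $x\in T^{-kN}W_{j,N}\cap T^{-lN}W_{j,N}$ with $k<l$, then $y=T^{kN}x\in W_{j,N}$ satisfies $T^{(l-k)N}y\in W_{j,N}\subseteq U_j$ with $(l-k)N\geq N$, a contradiction. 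These sets all have the same measure by invariance, and they are disjoint in a probability space, so $\nu(W_{j,N})=0$.

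Next, suppose $x\notin\bigcup_jW_j$. Then for every $j$ with $x\in U_j$, the point $T^nx$ lies in $U_j$ for infinitely many $n$. Since the $U_j$ containing $x$ form a neighbourhood basis, $s\in\mathrm R_\Gamma(x)$. Thus $\nu(\mathrm R^{-1}(s))=1$, and intersecting over the countably many $s\in\Gamma$ gives (2).

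The main subtlety I expect is measurability and bookkeeping: the sets $W_{j,N}$ are Borel, being intersections of preimages of open and closed sets. The disjointness argument has to use steps that are multiples of $N$ (rather than single steps) so that the return time is at least $N$. Once this is arranged, the rest is routine.
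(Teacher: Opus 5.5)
Your proof is correct and takes essentially the same route as the paper: the same $G_\delta$ description via the open sets $\{x:d(s^nx,x)<1/k\}$, and Poincar\'e recurrence for each single homeomorphism $\alpha_s$ over a countable base, followed by intersecting over the countably many $s\in\Gamma$. The only differences are minor: you prove the recurrence step yourself via the disjoint translates $T^{-kN}W_{j,N}$ where the paper cites it, and you verify the definition of a recurrent direction directly instead of constructing a return sequence.
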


\begin{proof}
Fix a compatible metric $d$ on $X$.

\vspace{0.2cm}

\noindent$(1)$. It suffices to write
$\mathrm{Boom}_\alpha(X)
=\bigcap_{s\in\Gamma}\bigcap_{k,N\geq1}\ \bigcup_{n\geq N}
\{x\in X:d(s^n x,x)<2^{-k}\}$.

\vspace{0.2cm}

\noindent$(2)$. We use the Poincar\'e recurrence argument of
\cite[Example~2.9(5)--(6) and its proof]{GL25}. Since $X$ is separable and metrizable, choose a countable base
$(U_\ell)_{\ell\geq1}$ of $X$. For $s\in\Gamma$ and $\ell\geq1$, the set
$N_{s,\ell}:=U_\ell\setminus
\bigcap_{N\geq1}\bigcup_{n\geq N}\alpha_s^{-n}(U_\ell)$ is $\nu$-null by Poincar\'e recurrence. Hence
$N_0=\bigcup_{s\in\Gamma}\bigcup_{\ell\geq1}N_{s,\ell}$ is $\nu$-null.
If $x\notin N_0$, fix $s\in\Gamma$ and, for each $j\geq1$, choose
$U_{\ell_j}$ with $x\in U_{\ell_j}\subseteq\{y\in X:d(y,x)<2^{-j}\}$. There are infinitely many $n$ such that $s^n x\in U_{\ell_j}$.
Starting with $m_0=0$, choose successively $m_j>m_{j-1}$ among these
return times. Then $d(s^{m_j}x,x)<2^{-j}$, so $s^{m_j}x\to x$.
Thus $X\setminus N_0\subseteq\mathrm{Boom}_\alpha(X)$, proving (2).
\end{proof}

\noindent The following is the $C^*$-algebra analogue of \cite[Definitions~2.6]{GL25}.

\begin{definition}
Let $\alpha:\Gamma\curvearrowright A$ be an action of a group $\Gamma$ on a unital C*-algebra $A$. A subalgebra $B\in\Sub(A)$ is called
\textbf{boomerang} if it is a boomerang point of the associated action $\Gamma\curvearrowright\Sub(A)$, where $\Sub(A)$ has the Wijsman topology.
\end{definition}

\noindent We now focus on the group case $A=C^*_r(\Gamma)$, for a countable group $\Gamma$, with $\alpha$ the action by conjugation by elements $\lambda_g$. The following Lemma is a direct consequence of the results of \cite{AmrutamJiang2026}. An element $g\in\Gamma\setminus\{e\}$ is called \textbf{primitive} if for all $h\in\Gamma$ and $n\in\Z$, $g=h^n$ implies $n\in\{-1,1\}$. In the remainder of this section, for a subgroup $H\leq\Gamma$, we denote by $E_H:C^*_r(\Gamma)\to C^*_r(H)$ the canonical conditional expectation.

\begin{lemma}\label{lem:expectation}
Let $\Gamma$ be a torsion-free non-amenable hyperbolic group. If $B\in\Sub(C^*_r(\Gamma))$ is boomerang and $t\neq e$ is a primitive element, then
$E_{\langle t\rangle}(B)\subseteq B$.
In particular, for all $w\in\Gamma\setminus\{ e\}$, the centralizer
$C_\Gamma(w)=\{g\in\Gamma:gw=wg\}$ satisfies $E_{C_\Gamma(w)}(B)\subseteq B$.
\end{lemma}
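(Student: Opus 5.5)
Since $B$ is boomerang, $t$ is a recurrent direction for $B$: there are positive integers $m_j\uparrow\infty$ with $\lambda_t^{m_j}B\lambda_t^{-m_j}\to B$ in the Wijsman topology. The plan is to show that averaging the conjugates $\lambda_t^{m_j}b\lambda_t^{-m_j}$ recovers $E_{\langle t\rangle}(b)$, and then use Wijsman convergence to conclude that this average lies in $B$.

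The key input from \cite{AmrutamJiang2026} is the following fact for a torsion-free non-amenable hyperbolic group $\Gamma$ and primitive $t$. For every $x\in C^*_r(\Gamma)$,
$$\Bigl\|\frac1n\sum_{i=1}^n\lambda_t^{k_i}x\lambda_t^{-k_i}-E_{\langle t\rangle}(x)\Bigr\|\to0$$
for suitable sparse sequences $k_1<\dots<k_n$. One may even choose the $k_i$ inside any prescribed infinite set of exponents with gaps growing fast enough. This is a Powers-type averaging argument. For $g\notin C_\Gamma(t)=\langle t\rangle$ (torsion-free hyperbolic and $t$ primitive give $C_\Gamma(t)=\langle t\rangle$), the elements $t^{k}gt^{-k}$ for widely spaced $k$ form a free-like family. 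The averages of $\lambda$ of such elements therefore have norm $O(n^{-1/2})$ by Haagerup's inequality or a ping-pong estimate. Elements of $\langle t\rangle$ are fixed. I will first prove the statement for $x\in\C[\Gamma]$, where one decomposes $x$ into its $\langle t\rangle$-part and the rest. I then extend by density, using that all the maps involved are contractive.

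With this in hand, fix $b\in B$ and $\varepsilon>0$. Choose $n$ and exponents $k_1,\dots,k_n$ among the return times $m_j$, sparse enough that the average $y=\frac1n\sum_i\lambda_t^{k_i}b\lambda_t^{-k_i}$ satisfies $\|y-E_{\langle t\rangle}(b)\|<\varepsilon$. The same estimate also holds for every tail shift: replace $k_i$ by $k_i+m$ with $m$ a further return time. Here I would use that conjugation by $\lambda_t^{m}$ commutes with $E_{\langle t\rangle}$, since $t$ commutes with $\langle t\rangle$.

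The main obstacle is passing from "each $\lambda_t^{k_i}b\lambda_t^{-k_i}$ is close to $B$" to "the average is close to $B$". Wijsman convergence only gives, for each fixed $x$, that $d(x,\lambda_t^{m_j}B\lambda_t^{-m_j})\to d(x,B)$. I would handle this via the dual form. Since $b\in B$, we have $d\bigl(\lambda_t^{m_j}b\lambda_t^{-m_j},\lambda_t^{m_j}B\lambda_t^{-m_j}\bigr)=0$, which is not directly useful. Instead, set $x=E_{\langle t\rangle}(b)$ and estimate $d(x,\lambda_t^{m}B\lambda_t^{-m})$ by $\|x-\lambda_t^{m}y\lambda_t^{-m}\|$ with $m$ a return time. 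Here $\lambda_t^{-m}x\lambda_t^{m}=x$, and $y\in B$ provided the $k_i$ are all zero. So I would instead average conjugates by $\lambda_t^{-k_i}$ inside $\lambda_t^{m}B\lambda_t^{-m}$. This requires choosing the $k_i$ as differences of return times and using that $\lambda_t^{m_j}B\lambda_t^{-m_j}$ approximately contains $B$ along the return sequence. If this bookkeeping is done carefully, it gives $d(x,B)=\lim_j d(x,\lambda_t^{m_j}B\lambda_t^{-m_j})\le\varepsilon$. Hence $E_{\langle t\rangle}(b)\in B$.

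For the centralizer claim: in a torsion-free hyperbolic group, for $w\neq e$ we have $C_\Gamma(w)=\langle t\rangle$ for a primitive root $t$ of $w$, which exists and is unique up to inversion. The centralizer is infinite cyclic, so $E_{C_\Gamma(w)}=E_{\langle t\rangle}$, and the first part applies.
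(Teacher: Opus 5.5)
\textbf{There is a genuine gap in the step that puts $E_{\langle t\rangle}(b)$ in $B$.} Your averaging input is fine: Powers-type averaging along arbitrary sparse sets of exponents, which the paper takes from \cite[Lemma~4.1 and Proposition~3.2]{AmrutamJiang2026}. Your reduction of the centralizer claim to $C_\Gamma(w)=\langle t\rangle$ with $t$ primitive is also fine.

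The problem is how you get the averaged conjugates into $B$. You correctly see that $\lambda_t^{m_j}B\lambda_t^{-m_j}\to B$ gives no control on $d(\lambda_t^{m_j}b\lambda_t^{-m_j},B)$. The substitute you sketch is vacuous, though. Since $x=E_{\langle t\rangle}(b)\in C_r^*(\langle t\rangle)$ commutes with $\lambda_t$,
\[
d\bigl(x,\lambda_t^{m}B\lambda_t^{-m}\bigr)=d\bigl(\lambda_t^{-m}x\lambda_t^{m},B\bigr)=d(x,B)
\]
for every $m$. So $d(x,B)=\lim_j d(x,\lambda_t^{m_j}B\lambda_t^{-m_j})$ is a tautology and gives no information. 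The ``bookkeeping with differences of return times'' is never shown to produce elements of $B$ (or of $\lambda_t^mB\lambda_t^{-m}$) near $x$; it runs into the same problem again. Also, the passage you call the main obstacle, from ``each conjugate is close to $B$'' to ``the average is close to $B$'', is trivial because $B$ is a linear subspace. The real issue is getting each conjugate close to $B$ in the first place.

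\textbf{The missing idea is to evaluate the Wijsman convergence at the point $b\in B$ itself:}
\[
d\bigl(\lambda_t^{-m_j}b\lambda_t^{m_j},B\bigr)=d\bigl(b,\lambda_t^{m_j}B\lambda_t^{-m_j}\bigr)\longrightarrow d(b,B)=0 .
\]
So the conjugates with the \emph{opposite} sign are asymptotically in $B$. Now proceed as follows.
\begin{enumerate}
\item Fix $\varepsilon>0$ and choose $J$ so large that, for every $j\geq J$, some $b_j\in B$ satisfies $\|\lambda_t^{-m_j}b\lambda_t^{m_j}-b_j\|<\varepsilon$.
\item Apply the averaging result to the primitive element $t^{-1}$ along the tail $(m_j)_{j\geq J}$. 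This gives indices $j_1,\dots,j_q\geq J$ with $\bigl\|E_{\langle t\rangle}(b)-\frac1q\sum_i\lambda_t^{-m_{j_i}}b\lambda_t^{m_{j_i}}\bigr\|<\varepsilon$.
\item Then $\frac1q\sum_i b_{j_i}\in B$ lies within $2\varepsilon$ of $E_{\langle t\rangle}(b)$, so $E_{\langle t\rangle}(b)\in B$ because $B$ is closed.
\end{enumerate}
Alternatively, take a return sequence for $t^{-1}$, which is also a recurrent direction since $B$ is boomerang, and average positive conjugates. This is exactly the paper's argument, and it is why the averaging lemma is needed along arbitrary subsequences.
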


\begin{proof}
Fix $a\in B$ and a return sequence $m_j$ for $t$, so that
$\lambda_{t^{m_j}}B\lambda_{t^{-m_j}}\to B$ in the Wijsman topology. Hence $d(\lambda_{t^{-m_j}}a\lambda_{t^{m_j}},B)
=d(a,\lambda_{t^{m_j}}B\lambda_{t^{-m_j}})
\to d(a,B)=0$. Since $t^{-1}$ is primitive, \cite[Lemma~4.1]{AmrutamJiang2026} verifies the boundary
separation hypothesis of \cite[Proposition~3.2]{AmrutamJiang2026} for $t^{-1}$.
That proposition applies to every subsequence, hence to each $(m_j)_{j\geq J}$: for every $\varepsilon>0$ there are
$j_1,\ldots,j_q\geq J$ s.t.
$$
\left\|E_{\langle t\rangle}(a)
-\frac1q\sum_{i=1}^q\lambda_{t^{-m_{j_i}}}a\lambda_{t^{m_{j_i}}}\right\|
<\varepsilon.
$$
Choose $J$ so large that
$d(\lambda_{t^{-m_j}}a\lambda_{t^{m_j}},B)<\varepsilon$ for every $j\geq J$,
and choose $b_i\in B$ with
$\|\lambda_{t^{-m_{j_i}}}a\lambda_{t^{m_{j_i}}}-b_i\|<\varepsilon$.
Since $q^{-1}\sum_{i=1}^q b_i\in B$, the triangle inequality gives
$$
d(E_{\langle t\rangle}(a),B)
\leq\left\|E_{\langle t\rangle}(a)-\frac1q\sum_{i=1}^q b_i\right\|
<2\varepsilon.
$$
Letting $\varepsilon\to0$ and using closedness of $B$ yields
$E_{\langle t\rangle}(a)\in B$.

\vspace{0.2cm}

\noindent Fix $w\neq e$. Since $\Gamma$ is torsion-free, $w$ has infinite order and \cite[Corollary~III.$\Gamma$.3.10]{BH99}
gives $[C_\Gamma(w):\langle w\rangle]<\infty$.
Being torsion-free and virtually cyclic, $C_\Gamma(w)=\langle t\rangle\cong\Z$.
If $t=u^k$ then $u\in C_\Gamma(w)=\langle t\rangle$ so $u=t^\ell$
and $k\ell=1$. Thus $t$ is primitive and the first assertion gives
$E_{C_\Gamma(w)}(B)\subseteq B$.
\end{proof}

\begin{lemma}\label{lem:isolation}
Let $\Gamma$ be a countable group. Suppose that $t,q\in\Gamma$ freely generate a free group of rank two. If $r\in\Z\setminus\{0\}$ then, for all  $x\in C_r^*(\langle t\rangle)$ with
$\tau_0(x)=0$ one has $E_{C_\Gamma(t^rqt^rq^{-1})}(x\lambda_qx\lambda_q^*)=\tau_0(x\lambda_{t^r}^*)^2\lambda_{t^rqt^rq^{-1}}$.
\end{lemma}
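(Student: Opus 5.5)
The plan is to reduce to finitely supported $x$ by continuity, and then identify exactly which group elements in the expansion of $x\lambda_qx\lambda_q^*$ lie in the centraliser, using only free-group combinatorics. Throughout, put $w=t^rqt^rq^{-1}$ and $H=C_\Gamma(w)$, and recall $E_H(\lambda_g)=\mathbf 1_H(g)\lambda_g$.

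\emph{Step 1: reduction.} Both sides of the identity are norm continuous in $x\in C_r^*(\langle t\rangle)$. The map $x\mapsto x\lambda_qx\lambda_q^*$ is continuous, $E_H$ is contractive, and $x\mapsto\tau_0(x\lambda_{t^r}^*)$ is continuous. The subspace $\{x:\tau_0(x)=0\}$ is closed. Its dense subspace $\operatorname{span}\{\lambda_{t^n}:n\neq0\}$ is dense: replace $y\in\C[\langle t\rangle]$ by $y-\tau_0(y)1$, which moves it by at most $2\|x-y\|$. So it suffices to treat $x=\sum_{n\neq0}c_n\lambda_{t^n}$, a finite sum. Then $\tau_0(x\lambda_{t^r}^*)=c_r$ and
$$
x\lambda_qx\lambda_q^*=\sum_{n,m\neq0}c_nc_m\lambda_{t^nqt^mq^{-1}},
\qquad
E_H(x\lambda_qx\lambda_q^*)=\sum_{n,m\neq0}c_nc_m\mathbf 1_H(t^nqt^mq^{-1})\lambda_{t^nqt^mq^{-1}}.
$$
It therefore remains to show that, for $n,m\neq0$, one has $t^nqt^mq^{-1}\in H$ if and only if $(n,m)=(r,r)$. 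This yields exactly $c_r^2\lambda_w$.

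\emph{Step 2: free-group combinatorics.} Let $F=\langle t,q\rangle\cong\F_2$. All elements $t^nqt^mq^{-1}$ lie in $F$, so membership in $H$ amounts to membership in $C_\Gamma(w)\cap F=C_F(w)$. In a free group the centraliser of a nontrivial element is the cyclic group generated by its root. The word $t^rqt^rq^{-1}$ is reduced and cyclically reduced, since its first letter is $t^{\pm1}$ and its last letter is $q^{-1}$. It is not a proper power: a cyclically reduced $u^k$ with $k\geq2$ has as its word the $k$-fold concatenation of $u$. The syllable sequence $t^r,q,t^r,q^{-1}$ of $w$ has its two $q$-syllables with opposite exponents, so it is not periodic with a shorter period. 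Hence $C_F(w)=\langle w\rangle$.

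\emph{Step 3: comparing normal forms.} For $k\geq1$, $w^k$ is the cyclically reduced word with $4k$ syllables. For $k\leq-1$, $w^k$ starts with the syllable $q$, and $w^0=e$. The element $t^nqt^mq^{-1}$ with $n,m\neq0$ is a reduced word of exactly $4$ syllables starting with a $t$-syllable. So it can only equal $w^1$, and uniqueness of reduced forms forces $n=m=r$. This completes the proof.

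The main point requiring care is Step 2, the root/centraliser claim for $w$ in $F$. Passing from $C_\Gamma(w)$ to $C_F(w)$ is trivial only because the relevant elements lie in $F$; the rest is standard free-group normal-form reasoning.
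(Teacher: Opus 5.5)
Your proof is correct and follows the paper's route: expand a finitely supported $x$ with $\tau_0(x)=0$, show that among the words $t^kqt^lq^{-1}$ with $k,l\neq0$ only $w=t^rqt^rq^{-1}$ lies in $C_\Gamma(w)$, and conclude by continuity and norm density. The only difference is that the paper imports this membership criterion from Lemma~2.3 of Amrutam--Jiang, whereas you prove it directly from $C_\Gamma(w)\cap\langle t,q\rangle=C_{\langle t,q\rangle}(w)=\langle w\rangle$ and uniqueness of reduced words; this argument is valid, and for the no-proper-power step it is enough to note that the letter $q$ occurs exactly once in the reduced word of $w$.
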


\begin{proof}
Let $x\in\C[\langle t\rangle]$ with $\tau_0(x)=0$ and write $x=\sum_{k\in F}x_k\lambda_{t^k}$ for some finite subset $F\subset\Z\setminus\{0\}$ and where $x_k:=\tau_0(x\lambda_{t^k}^*)$. Let $k,l\neq0$ and put $w:=t^rqt^rq^{-1}$. By \cite[Lemma~2.3]{AmrutamJiang2026} we have
$t^kqt^lq^{-1}\in C_\Gamma(w)\iff k=l=r$ hence:
\begin{eqnarray*}
E_{C_\Gamma(w)}(x\lambda_qx\lambda_q^*)&=&\sum_{k,l\in F}x_kx_l E_{C_\Gamma(w)}(\lambda_{t^kqt^lq^{-1}})\\
&=&\sum_{k,l\in F, t^kqt^lq^{-1}\in C_\Gamma(w)}x_kx_l \lambda_{t^kqt^lq^{-1}}=   \tau_0(x\lambda_{t^r}^*)^2\lambda_w.
\end{eqnarray*}
\noindent By density of such $x$ in
$\ker(\tau_0|_{C_r^*(\langle t\rangle)})$ and by continuity we conclude the proof.\end{proof}

\noindent The following lemma is standard. We include a short proof for the sake of completeness.

\begin{lemma}\label{LemAsymptotic}
Let $A$ be a C*-algebra. If $B\subseteq A$ a sub-$C^*$-algebra then
$$\mathcal B:=\{(a_j)\in\ell^\infty(\N,A):d(a_j,B)\to0\}$$
is a C*-subalgebra of $\ell^\infty(\N,A)$. Moreover, the following holds.
\begin{enumerate}
\item For every sequence $(T_j)$ of linear contractions $T_j:A\to A$ there exists a unique linear map $T:\ell^\infty(\N,A)\to\ell^\infty(\N,A)$ such that $T(a_j)_j:=(T_j(a_j))_j$ for all $(a_j)_j\in\ell^\infty(\N,A)$. $T$ is a contraction and, if $T_j(B)\subseteq B$ for all $j$ then $T(\mathcal{B})\subseteq \mathcal{B}$.
\item For constant sequences, $(a,a,\ldots)\in\mathcal B\iff a\in B$.
\end{enumerate}
\end{lemma}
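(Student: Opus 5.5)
The plan is to verify the subalgebra property of $\mathcal B$ directly from the distance function, and then treat (1) and (2) as elementary consequences.

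\emph{$\mathcal B$ is a C*-subalgebra.} Fix $(a_j),(c_j)\in\mathcal B$ and choose $b_j,b_j'\in B$ with $\|a_j-b_j\|\leq d(a_j,B)+1/j$ and $\|c_j-b_j'\|\leq d(c_j,B)+1/j$. Then $\|b_j\|\leq\|a_j\|+d(a_j,B)+1/j\leq 2\sup_i\|a_i\|+1$, so the sequence $(b_j)$ is bounded, and likewise $(b'_j)$. The sum, scalar multiple and adjoint are immediate:
$$d(a_j+c_j,B)\leq d(a_j,B)+d(c_j,B),\qquad d(\lambda a_j,B)=|\lambda|\,d(a_j,B)\ (\lambda\neq0),\qquad d(a_j^*,B)=d(a_j,B),$$
the last because $B=B^*$. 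For products, $b_jb_j'\in B$ and
$$\|a_jc_j-b_jb_j'\|\leq\|a_j-b_j\|\,\|c_j\|+\|b_j\|\,\|c_j-b_j'\|\to0,$$
using boundedness of $(c_j)$ and $(b_j)$. For closedness in the sup norm, note that $d(\cdot,B)$ is $1$-Lipschitz. If $x^{(n)}\to x$ in $\ell^\infty(\N,A)$ with $x^{(n)}\in\mathcal B$, then $d(x_j,B)\leq\|x-x^{(n)}\|_\infty+d(x^{(n)}_j,B)$. Taking $\limsup_j$ and then $n\to\infty$ gives $d(x_j,B)\to0$.

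\emph{Part (1).} Define $T((a_j)_j)=(T_j(a_j))_j$. Since each $T_j$ is a contraction, $\|T_j(a_j)\|\leq\|a_j\|$, so $T$ maps $\ell^\infty(\N,A)$ into itself, is linear and is contractive. Uniqueness holds because the defining formula determines $T$ on every element. Now suppose $T_j(B)\subseteq B$ for all $j$. Given $(a_j)\in\mathcal B$ and any $b\in B$, we have $T_j(b)\in B$, so
$$d(T_j(a_j),B)\leq\|T_j(a_j)-T_j(b)\|\leq\|a_j-b\|.$$
Taking the infimum over $b$ gives $d(T_j(a_j),B)\leq d(a_j,B)\to0$, hence $T(\mathcal B)\subseteq\mathcal B$.

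\emph{Part (2).} A constant sequence $(a,a,\dots)$ lies in $\mathcal B$ if and only if the constant sequence $d(a,B)$ tends to $0$, that is, $d(a,B)=0$. Since $B$ is closed, this holds if and only if $a\in B$.

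\emph{Main obstacle.} There is no real difficulty here. The only points needing care are the boundedness of the approximants $b_j$, which is what makes the product estimate work, and the use of closedness of $B$ in (2).
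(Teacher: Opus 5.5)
Your proof is correct and follows essentially the same route as the paper: the same $1$-Lipschitz estimate gives norm-closedness, and the same approximant trick handles products. The only difference is packaging: the paper proves the pointwise inequality $d(uv,B)\leq\|u\|\,d(v,B)+\|v\|\,d(u,B)+d(u,B)\,d(v,B)$, whereas you work directly with bounded approximating sequences $(b_j),(b_j')$; your verifications of (1) and (2), which the paper simply calls clear, are also correct.
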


\begin{proof}
It is clear that $\mathcal B$ is a self-adjoint linear subspace. To prove norm closedness, let
$a^{(n)}=(a_j^{(n)})_j\in\mathcal B$ and
$a=(a_j)_j\in\ell^\infty(\N,A)$ satisfying
$\|a^{(n)}-a\|_\infty\longrightarrow0$. For every $n,j\in\N$,
$d(a_j,B)\leq\|a_j-a_j^{(n)}\|+d(a_j^{(n)},B)
\leq\|a-a^{(n)}\|_\infty+d(a_j^{(n)},B)$. Since $a^{(n)}\in\mathcal B$, taking the upper limit as $j\to\infty$ gives, for all $n\in\N$,
$$
0\leq\limsup_{j\to\infty}d(a_j,B)
\leq\|a-a^{(n)}\|_\infty.
$$
Letting $n\to\infty$ yields $d(a_j,B)\to0$, hence
$a\in\mathcal B$. Stability under multiplication follows from $d(uv,B)\leq\|u\|d(v,B)+\|v\|d(u,B)+d(u,B)d(v,B)$. Indeed, for $\varepsilon>0$, choose $b,c\in B$ such that
$\|u-b\|\leq d(u,B)+\varepsilon$ and
$\|v-c\|\leq d(v,B)+\varepsilon$. Since $bc\in B$,
$uv-bc=(u-b)v+b(v-c)$ and $\|b\|\leq\|u\|+d(u,B)+\varepsilon$, we have
\begin{eqnarray*}
d(uv,B)&\leq&\|u-b\|\|v\|+\|b\|\|v-c\|\\
&\leq&(d(u,B)+\varepsilon)\|v\|
 +(\|u\|+d(u,B)+\varepsilon)(d(v,B)+\varepsilon).
\end{eqnarray*}
Letting $\varepsilon\to0$ proves the inequality. Statements $(1)$ and $(2)$ are clear.\end{proof}

\begin{theorem}\label{thm:boomerang}
Let $\Gamma$ be a torsion-free non-amenable hyperbolic group and $B\in\Sub(C_r^*(\Gamma))$. If $B$ is boomerang then there exists $H\leq\Gamma$ such that $B=C_r^*(H)$.\end{theorem}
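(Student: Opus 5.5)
Set $H:=\{g\in\Gamma:\lambda_g\in B\}$. Since $B$ is a unital $C^*$-algebra, $H$ is a subgroup and $C_r^*(H)\subseteq B$. For the reverse inclusion it suffices to show that $\widehat a(g)\neq 0$ implies $\lambda_g\in B$ for every $a\in B$ and $g\in\Gamma$. Then $E_H(a)$ has the same Fourier coefficients as $a$. Hence $a=E_H(a)\in C_r^*(H)$, using that $a\in B$ is determined by its Fourier coefficients (faithfulness of $\tau_0$) and that $E_H$ maps $C_r^*(\Gamma)$ onto $C_r^*(H)$.

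Fix $a\in B$ and $g\neq e$ with $\widehat a(g)\neq0$; the case $g=e$ is trivial. By Lemma~\ref{lem:expectation} applied to $C_\Gamma(g)=\langle t\rangle$ with $t$ primitive and $g=t^r$, $r\neq 0$, we get
$$x:=E_{\langle t\rangle}(a)-\tau_0(a)1\in B\cap C_r^*(\langle t\rangle),\qquad \tau_0(x)=0,\qquad \tau_0(x\lambda_{t^r}^*)=\widehat a(g)\neq0.$$
The aim is to isolate the single coefficient at $t^r$. Since $\Gamma$ is non-elementary hyperbolic, choose $q$ such that $t,q$ freely generate a free group of rank two; the standard ping-pong argument gives such $q$ outside the virtually cyclic stabiliser of $t^{\pm\infty}$, after passing to powers if needed. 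The obstruction is that $\lambda_q x\lambda_q^*$ need not lie in $B$, because $B$ is not assumed invariant. This is where the boomerang property enters again. Take a return sequence $m_j$ for $q$. Then $d(\lambda_{q^{m_j}}x\lambda_{q^{m_j}}^*,B)\to0$. Replacing $q$ by $q_j=q^{m_j}$, for $j$ large $t,q_j$ still freely generate $F_2$, and
$$x\lambda_{q_j}x\lambda_{q_j}^*\in\mathcal B$$
asymptotically, in the sense of Lemma~\ref{LemAsymptotic}.

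Apply Lemma~\ref{LemAsymptotic} with $T_j=E_{C_\Gamma(w_j)}$, where $w_j=t^rq_jt^rq_j^{-1}$. By Lemma~\ref{lem:expectation}, $T_j(B)\subseteq B$. By Lemma~\ref{lem:isolation}, the resulting sequence is $(\widehat a(g)^2\lambda_{w_j})_j\in\mathcal B$, so $d(\lambda_{w_j},B)\to0$. Since $d(\lambda_h,B)\in\{0\}\cup[1,\infty)$ for a unitary group element (by the Fourier argument of Theorem~\ref{thm:reduced-group-map}(2) applied to $E_{H}$-type projections), this would give $\lambda_{w_j}\in B$ for large $j$. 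That does not immediately give $\lambda_{t^r}\in B$, and I expect this to be the main obstacle. I would handle it as follows. From $\lambda_{w_j}\in B$ and $\lambda_{q_j}x\lambda_{q_j}^*$ approximately in $B$, try to extract $\lambda_{t^r}$ by a second isolation: apply $E_{C_\Gamma(w')}$ to a product such as $x\lambda_{w_j}^{*}\,(\cdot)$, or take $\lambda_{w_j}\lambda_{q_j}\lambda_{t^{-r}}\lambda_{q_j}^*$-type words. Alternatively, apply Lemma~\ref{lem:isolation} with the conjugate of $x$ on the left and use $C_\Gamma(t^r)=\langle t\rangle$ to project back.

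The dichotomy $d(\lambda_h,B)\in\{0\}\cup[1,\infty)$ is itself not automatic for non-group algebras $B$. In its place I would argue with a cleaner invariant: the set $\{h:\lambda_h\in B\}$ together with the boomerang return sequences, each time using Lemma~\ref{lem:expectation} to land back exactly in $B$ rather than asymptotically.
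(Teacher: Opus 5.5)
\textbf{Gap: the last step, extracting $\lambda_g\in B$, is missing.} Up to $(\widehat a(g)^2\lambda_{w_j})_j\in\mathcal B$ your route is the paper's, apart from a harmless sign slip. A return sequence $m_j$ for $q$ gives $\lambda_{q^{m_j}}B\lambda_{q^{m_j}}^*\to B$, hence $d(\lambda_{q^{m_j}}^*x\lambda_{q^{m_j}},B)\to0$. So take $q_j=q^{-m_j}$, or a return sequence for $q^{-1}$, which exists since $B$ is boomerang; Lemma~\ref{lem:isolation} works for any free pair.

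You try to upgrade to $\lambda_{w_j}\in B$ via the dichotomy $d(\lambda_h,B)\in\{0\}\cup[1,\infty)$. As you suspect, this is false for general $B$. For example, in $C_r^*(\mathbb Z)=C(\mathbb T)$ the algebra $B=\{f:f(1)=f(e^{i\delta})\}$ has $d(z,B)=|1-e^{i\delta}|/2$ by Proposition~\ref{prop:rad-dist-formula}. The alternatives you then list are not an argument. In particular, words like $\lambda_{w_j}\lambda_{q_j}\lambda_{t^{-r}}\lambda_{q_j}^*$ equal $\lambda_{t^r}$, but you have no information placing $\lambda_{q_j}\lambda_{t^{-r}}\lambda_{q_j}^*$ near $B$. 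So the proof stops at $(\lambda_{w_j})_j\in\mathcal B$.

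\textbf{Missing idea.} You never need exact membership of $\lambda_{w_j}$; stay inside the $C^*$-algebra $\mathcal B$ of Lemma~\ref{LemAsymptotic}.
\begin{enumerate}
\item With $q_j$ chosen so that $(\lambda_{q_j}x\lambda_{q_j}^*)_j\in\mathcal B$, its adjoint also lies in $\mathcal B$.
\item Hence so does the product $(\lambda_{w_j}\lambda_{q_j}x^*\lambda_{q_j}^*)_j=(\lambda_{t^r}\lambda_{q_j}\lambda_{t^r}x^*\lambda_{q_j}^*)_j$.
\item Apply the constant sequence of maps $T_j=E_{\langle t\rangle}$, which preserve $B$ by Lemma~\ref{lem:expectation}. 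Freeness gives $\langle t\rangle\cap q_j\langle t\rangle q_j^{-1}=\{e\}$, so $E_{\langle t\rangle}(\lambda_{q_j}y\lambda_{q_j}^*)=\tau_0(y)1$ for $y\in C_r^*(\langle t\rangle)$.
\item Bimodularity of $E_{\langle t\rangle}$ then gives, for every $j$,
$$E_{\langle t\rangle}\bigl(\lambda_{t^r}\lambda_{q_j}\lambda_{t^r}x^*\lambda_{q_j}^*\bigr)=\tau_0(\lambda_{t^r}x^*)\lambda_{t^r}=\overline{\widehat a(g)}\,\lambda_{g}.$$
\item By Lemma~\ref{LemAsymptotic}(1) this constant sequence lies in $\mathcal B$, and by Lemma~\ref{LemAsymptotic}(2) $\lambda_g\in B$.
\end{enumerate}
This is exactly how the paper concludes. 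Your hint to ``project back'' onto $\langle t\rangle$ points this way, but it needs two specific ingredients: pairing $\lambda_{w_j}$ with the conjugated $x^*$ rather than with $\lambda_{t^{-r}}$, and the trivial intersection $\langle t\rangle\cap q_j\langle t\rangle q_j^{-1}$.
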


\begin{proof}
Put $A=C_r^*(\Gamma)$ and let $B\in\Sub(A)$ be a boomerang subalgebra. Define $H_B:=\{g\in\Gamma:\lambda_g\in B\}$. It is clear that $H_B$ is a subgroup of $\Gamma$ and $C^*_r(H_B)\subset B$. It suffices to show that $B\subset C^*_r(H_B)$. To do so, we will show the following statement: for all $a\in B$ and all $g\in\Gamma\setminus\{e\}$ one has $\tau_0(a\lambda_g^*)\neq0\Rightarrow g\in H_B$.

\vspace{0.2cm}

\noindent Fix $a\in B$ and $g\neq e$ such that $\tau_0(a\lambda_g^*)\neq0$. Replacing $a$ by $a-\tau_0(a)1$ we may and will assume that $\tau_0(a)=0$.
Write $g=t^r$, with $t$ primitive and $r\neq0$
\cite[Remark~4.2]{AmrutamJiang2026}. By Lemma~\ref{lem:expectation}, $x:=E_{\langle t\rangle}(a)\in B$. Note also that $\tau_0(x)=0$ and $\tau_0(x\lambda_g^*)\neq0$.

\vspace{0.2cm}

\noindent Choose $h\in\Gamma$ such that $t,h$ freely generate $\F_2$
\cite[Theorem~0.2]{AD19}. The boomerang property gives positive
integers $m_j\uparrow\infty$ with
$\lambda_{h^{m_j}}B\lambda_{h^{-m_j}}\to B$ in the Wijsman topology.
Since
$d(\lambda_{h^{-m_j}}x\lambda_{h^{m_j}},B)
=d(x,\lambda_{h^{m_j}}B\lambda_{h^{-m_j}})\to d(x,B)=0$,
we have $(\lambda_{h^{-m_j}}x\lambda_{h^{m_j}})_j\in\mathcal B$, where $\mathcal B:=\{(a_j)\in\ell^\infty(\N,A):d(a_j,B)\to0\}$, as in Lemma \ref{LemAsymptotic}. Also $(x)_j\in\mathcal B$, hence $(x\lambda_{h^{-m_j}}x\lambda_{h^{m_j}})_j\in\mathcal B$ since $\mathcal{B}$ is a subalgebra.

\vspace{0.2cm}

\noindent For every $j$, the pair $t,h^{-m_j}$ freely generates $\F_2$ so Lemma~\ref{lem:isolation} gives
$$
E_{C_\Gamma(gh^{-m_j}gh^{m_j})}
 (x\lambda_{h^{-m_j}}x\lambda_{h^{m_j}})
=\tau_0(x\lambda_g^*)^2\lambda_{gh^{-m_j}gh^{m_j}}.
$$
These conditional expectations preserve $B$ by
Lemma~\ref{lem:expectation} (since $gh^{-m_j}gh^{m_j}\neq e$) and Lemma \ref{LemAsymptotic}(1) implies that $(\tau_0(x\lambda_g^*)^2\lambda_{gh^{-m_j}gh^{m_j}})_j\in\mathcal B$. Dividing by the non-zero scalar $\tau_0(x\lambda_g^*)^2$ gives $
(\lambda_{gh^{-m_j}gh^{m_j}})_j\in\mathcal B$. Since $\mathcal B$ is a $*$-algebra, it follows that $(\lambda_{gh^{-m_j}gh^{m_j}}
 \lambda_{h^{-m_j}}x^*\lambda_{h^{m_j}})_j\in\mathcal B$.
 
\vspace{0.2cm}

\noindent Freeness gives $\langle t\rangle\cap
h^{-m_j}\langle t\rangle h^{m_j}=\{e\}$, and hence $E_{\langle t\rangle}(\lambda_{h^{-m_j}}y\lambda_{h^{m_j}})
=\tau_0(y)1$ for all  $y\in C_r^*(\langle t\rangle)$ (first for group unitaries and then by linearity, continuity and norm density). Therefore
$$
E_{\langle t\rangle}\!\left(
 \lambda_{gh^{-m_j}gh^{m_j}}
 \lambda_{h^{-m_j}}x^*\lambda_{h^{m_j}}\right)=\lambda_g E_{\langle t\rangle}
 (\lambda_{h^{-m_j}}\lambda_gx^*\lambda_{h^{m_j}})
 =\overline{\tau_0(x\lambda_g^*)}\lambda_g.
$$
By Lemma~\ref{lem:expectation} $E_{\langle t\rangle}(B)\subseteq B$. Hence, Lemma \ref{LemAsymptotic}(1) and the previous computation shows that the constant sequence
$(\overline{\tau_0(x\lambda_g^*)}\lambda_g)_j$ belongs to
$\mathcal B$, so $\lambda_g\in B$ and $g\in H_B$.\end{proof}

\begin{corollary}\label{cor:hyperbolic_no_exotic}
If $\Gamma$ is a torsion-free non-amenable hyperbolic group, then every IRA of $C_r^*(\Gamma)$ is geometric.
\end{corollary}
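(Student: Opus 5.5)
The plan is to combine Proposition~\ref{prop:ira-boomerang} with Theorem~\ref{thm:boomerang} and Lemma~\ref{lem:canonical-cstar-lift}(1).

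Let $\nu$ be an IRA of $C_r^*(\Gamma)$. We apply Proposition~\ref{prop:ira-boomerang} to the conjugation action $\alpha:\Gamma\curvearrowright\Sub(C_r^*(\Gamma))$, $gB=\lambda_gB\lambda_g^*$. This action is by homeomorphisms for the Wijsman topology (Lemma~\ref{lem:inverse-image}, as noted in Section~\ref{sec:random-rigidity}). Since $C_r^*(\Gamma)$ is separable ($\Gamma$ countable), $\Sub(C_r^*(\Gamma))$ is Polish by Theorem~\ref{thm:polish}, hence separable and metrizable. Proposition~\ref{prop:ira-boomerang}(1) then shows that $\mathrm{Boom}_\alpha(\Sub(C_r^*(\Gamma)))$ is $G_\delta$, hence Borel. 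Proposition~\ref{prop:ira-boomerang}(2), using $\Gamma$-invariance of $\nu$, gives
$$\nu\bigl(\mathrm{Boom}_\alpha(\Sub(C_r^*(\Gamma)))\bigr)=1.$$

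Next, every boomerang subalgebra $B$ is of the form $C_r^*(H)$ for some $H\leq\Gamma$, by Theorem~\ref{thm:boomerang}. This is where the hypotheses torsion-free, non-amenable and hyperbolic are used. Hence the set of boomerang points is contained in $I_\Gamma=\iota_r(\Sub(\Gamma))$, which is closed by Theorem~\ref{thm:reduced-group-map}(3). Therefore $\nu(I_\Gamma)=1$, and Lemma~\ref{lem:canonical-cstar-lift}(1) shows that $\nu$ is geometric. Concretely, $\nu=(\iota_r)_*\eta$ with $\eta=(\iota_r^{-1})_*(\nu|_{I_\Gamma})$, and $\eta$ is an IRS because $\iota_r^{-1}$ is equivariant.

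The only point needing care is measurability. It is handled by the $G_\delta$ statement for the boomerang set and the closedness of $I_\Gamma$, so no completion of $\nu$ is required. The substantive work is already contained in Theorem~\ref{thm:boomerang}. The present corollary is a Poincaré-recurrence reduction from invariant measures to boomerang points.
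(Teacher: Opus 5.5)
Your proposal is correct and follows the paper's proof exactly: Proposition~\ref{prop:ira-boomerang} gives that $\nu$-almost every subalgebra is boomerang, Theorem~\ref{thm:boomerang} places these in $I_\Gamma$, and Lemma~\ref{lem:canonical-cstar-lift}(1) concludes. Your extra remarks on measurability (the boomerang set is $G_\delta$ and $I_\Gamma$ is closed) are accurate and only make explicit what the paper leaves implicit.
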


\begin{proof}
Let $\nu$ be an IRA of $C_r^*(\Gamma)$. Since $C_r^*(\Gamma)$ is separable,
Proposition~\ref{prop:ira-boomerang} implies that $\nu$-almost every $B$ is boomerang.
Theorem \ref{thm:boomerang} therefore gives $\nu(I_\Gamma)=1$ so $\nu$ is geometric, by Lemma \ref{lem:canonical-cstar-lift}.\end{proof}


\end{document}